\newtheoremstyle{dotless}{}{}{\itshape}{}{\bfseries}{}{ }{}
\theoremstyle{dotless}
\newtheorem{theorem}{Theorem}[section]
\newtheorem{lemma}{Lemma}%[section]
\theoremstyle{definition}
\DeclareMathOperator{\sign}{sign}
\newcommand{\La}{\langle}
\newcommand{\Ra}{\rangle}
\def\sign{\operatorname{sgn}}
\newcommand{\al}{\alpha}
\newcommand{\eps}{\varepsilon}
\newcommand{\cD}{\mathcal D}
\newcommand{\cF}{\mathcal F}
\newcommand{\cT}{\mathcal T}
\newcommand{\cU}{\mathcal U}
\newcommand{\bC}{\mathbb C}
\newcommand{\bR}{\mathbb R}
\newcommand{\bfT}{\mathbf T}
\newcommand{\hdp}{h_{J''}}
\newcommand{\one}{\mathbf{1}}
\newtheorem{remark}[theorem]{Remark}
\newtheorem{defin}[theorem]{Definition}
\begin{document}

\title[Dyadic bi-parameter repeated commutator and dyadic product BMO]%
{Dyadic bi-parameter repeated  commutator and dyadic product BMO}
\author[I.~ Holmes]{Irina Holmes}
\thanks{IH is partially supported by the NSF an NSF Postdoc under Award No.1606270}
\address{Department of Mathematics, Texas A\&M University, College Station, TX 77843-3368 }
\author[S.~Treil]{Sergei Treil}
\thanks{ST is partially supported by the NSF  DMS 1856719}
\address{Department of Mathematics, Brown University, 151 Thayer Str./Box 1917, Providence, RI 02912, USA}
\email{treil@math.brown.edu \textrm{(S. Treil)}}
\author[A.~Volberg]{Alexander Volberg}
\thanks{AV is partially supported by the NSF grant DMS 1900268 and by Alexander von Humboldt foundation}
\address{Department of Mathematics, Michigan Sate University, East Lansing, MI. 48823}
\email{volberg@math.msu.edu \textrm{(A.\ Volberg)}}
\makeatletter
\@namedef{subjclassname@10010}{
  \textup{10010} Mathematics Subject Classification}
\makeatother
\subjclass[10010]{42B100, 42B35, 47A100}
\keywords{Bi-parameter Carleson embedding, bi-parameter Hankel operators, dyadic model}
\begin{abstract} 
Let $\bfT$ is a certain tensor product of simple dyadic shifts defined below.
We prove here that for dyadic bi-parameter repeated commutator its norm can be estimated from below by Chang-Fefferman $BMO$ norm pertinent to its symbol. See Theorems \ref{mainskip}, \ref{mainskip1} at the end of this article. But this is done below under an extra assumption on the Haar--Fourier side of the symbol.  In Section \ref{analyze} we carefully analyze what goes wrong in the absence of this extra assumption. At the end of this note we also list a counterexample to the existing proof of characterization of bi-parameter repeated commutator with the Hilbert transforms. This is a counterexample to the proof, and it is not a counterexample to the statement of factorization result in bi-disc, or to Nehari's theorem in bi-disc.
%$  \|\bfT  b-b \bfT \| \asymp \|b\|_{bmo^d}$.  This result is well-known for many types of bi-parameter paraproducts, see \cite{FS} and  \cite{DLWY} for more details.
\end{abstract}
\maketitle

%\includepdf[pages=1-2]{OpList.pdf}

\section{Introduction}

In \cite{HTV} we considered the characterization of the symbols of bounded commutator of multiplication and simple dyadic bi-parameter  singular integral.
In this note we are considering a  dyadic model of {\it repeated} bi-parameter commutator. It has been noticed that repeated commutators are much harder to treat than the simple ones. The former still await the understanding, see below. The latter are well understood in many situations, see \cite{FS}, \cite{DLWY},  \cite{DKPS},  \cite{HTV}.

Bi-parameter theory is notoriously more difficult than the more classical one  parameter theory of singular integrals. The good place to get acquainted with multi-parameter specifics are the papers of J.-L. Journ\'e \cite{JLJ}, \cite{JLJ2} and Muscalu--Pipher--Tao--Thiele \cite{MPTT1}, \cite{MPTT2}. The applications to analysis in polydisc can be found in \cite{Ch},  \cite{ChF}, \cite{ChF2}.

It is well known \cite{ChF, ChF2, JLJ, JLJ2} that in the multi-parameter setting all concepts of Carleson measure, $BMO$, John--Nirenberg inequality, Calder\'on--Zygmund decomposition (used in classical theory) are much more delicate. Paper \cite{MPTT1}  develops a completely new approach to prove natural tri-linear bi-parameter estimates on bi-parameter paraproducts, especially outside of Banach range. In \cite{MPTT1} Journ\'e's lemma \cite{JLJ2} was used, but the approach did not generalize to multi-parameter paraproduct forms. This issue was resolved in \cite{MPTT2}, where a simplified method was used  to address the multi-parameter paraproducts. But repeated bi-parameter commutators  involve {\it several} bi-parameter paraproducts at once. Hence, the estimate of repeated bi-parameter commutators {\it from below} becomes tough: one needs to isolate  those paraproducts one from another. And they do not  like to be isolated.

One of the new feature of the multi-parameter theory is captured by several different definitions of $BMO$, see \cite{FS}, \cite{Ch}. The necessity of those new effects was discovered first by Carleson \cite{Car}, see also \cite{Tao}. 

The difficulties with multi-parameter theory was highlighted recently by two very different series of papers. One concerns itself with the Carleson measure and Carleson embedding on polydisc and on multi-tree, see, e.g. \cite{AMPS18}, \cite{AHMV}, \cite{MPV}, \cite{MPVZ}.
These papers, roughly speaking, are devoted to harmonic analysis (Carleson embedding in particular) on graphs with cycles. This theory is drastically different from the usual one, and it is much more difficult.

Another particularity of the multi-parameter theory is highlighted by the repeated commutator story. Lacey and Ferguson \cite {FL} found the characterization of the symbols that give us bounded ``small Hankel operators". It was a breakthrough article that gave a multi-parameter Nehari theorem and  a long searched after factorization of bi-parameter Hardy space $H^1$. It was exactly equivalent to a bi-parameter ``repeated commutator characterization". Several papers followed where ``bi-'' was upgraded to ``multi-'', and where repeated commutation was performed with different classical singular integrals: \cite{DP}, \cite {FL}, \cite{L}, \cite {LPPW}, \cite {LT}.  

There are many new and beautiful ideas in the above mentioned papers devoted to this subject. We also want to mention  \cite{OPS}, where the authors use an argument inspired by Toeplitz operators to  
show the lower bounds. It assumes the lower norm for the Hilbert  
transform  claimed in \cite{FL}, \cite{LPPW} as a black  
box.  But there is a problem with \cite{FL} and with what followed.

That was a big breakthrough in multi-parameter theory. Unfortunately, \cite{V} indicated a hole in  the proofs of \cite{FL}, \cite{L}, and this circle of problems is still unsettled. See also Section \ref{firstex} for the more detailed explanation.

We decided to simplify the problem to the ``bare bones", and to consider repeated commutators with the simplest dyadic singular integral, namely, with the dyadic shift of order $1$. The repeated commutator problem presented the same type of difficulty as in \cite{FL} and subsequent papers. But we analyze these difficulties below. They become very clear (but still largely unsurmountable) in the dyadic case we consider below. 

However, with an extra assumption on the symbol of repeated commutator we managed  to get the expected answer, see Theorems \ref{mainskip}, \ref{mainskip1} at the end of this article. The expected answer is of course that the repeated commutator is bounded if and only if the symbol belongs to dyadic Chang--Fefferman $BMO$. If symbol does belong to dyadic Chang--Fefferman $BMO$ then the repeated commutator is indeed bounded, see e.g. \cite{LM}.

In the opposite direction the only fully proved result so far is in \cite{BP}, but this paper requires that infinitely many commutators be bounded to conclude that the symbol is bounded.

As we already mentioned with an extra assumption on the symbol we can prove the necessity of  dyadic Chang--Fefferman $BMO$ (see Theorems \ref{mainskip}, \ref{mainskip1} at the end of this article).
We list carefully what are the difficulties to be surmounted in the general case (without an extra assumption on the symbol), and what kind of combinatorial morass should be taken care of. This is left to a possibly more skillful reader.

\section{Repeated commutator}
\label{S:Intro}

Let $\cD$ be the usual dyadic lattice on the line and $\cD\times \cD$ be the family of dyadic rectangles on the plane.
Haar functions are denote $\{h_I\}_{I \in \cD}$. We consider $R=I\times J$ and $h_R= h_I\otimes h_J$.
Let $T_1$ be the dyadic shift defined by on the functions of variable $x$:
$$
T_1 h_I = h_{I_+}- h_{I_-}, \,\, \text{if}\,\, I\,\, \text{is even}; \quad T_1h_I=0,\,\, \text{otherwise}\,.
$$
Let $T_2$ be the copy of $T_1$, but acting on an independent variable $y$:
$$
T_2 h_J = h_{J_+}- h_{J_-}, \,\, \text{if}\,\, J\,\, \text{is even}; \quad T_2h_J=0,\,\, \text{otherwise}\,.
$$
Test functions are functions $f= f(x, y)$, and $\bfT := T_1\otimes T_2$.

We are interested in the commutator
$$
\cT^b:= [T_2, [T_1, b]] =T_2 (T_1 b - b T_1) - (T_1 b - b T_1)T_2 = T_2T_1 b - T_2 b T_1 - T_1b T_2 + b T_1T_2
$$
and in characterization of its boundedness. The ultimate goal is to prove that its boundedness is equivalent to

$$
b\in BMO^d,
$$
where $BMO$ is understood in Chang--Fefferman definition. We always drop the superscript $d$ as everything is  dyadic below.

\bigskip

Simple commutator $[T_1, b]$ can be naturally split to $3$ parts: $D$, $\pi$, $Z$ parts. They correspond to different parts of the product operator $f\to b\cdot f$. 

First $D$ part follows.
$$
(b, f) \to D(b, f):=\sum_I \sum_{I': I\subsetneq I'} \Delta_{I'} b \cdot\Delta_I f=
$$
$$
\sum_I (b, \one_I/|I|) (f, h_I) h_I\,.
$$
Now compute commutator:
$$
TD(b, f) - D(b, Tf)  = \sum_I (b, \one_I/|I|) (f, h_I) Th_I - \sum_I (b, \one_I/|I|) (Tf, h_I) h_I\,.
$$
Now compute commutator again:
$$
TD(b, f) - D(b, Tf)  = \sum_I \La b \Ra_I ( T(\Delta_I f) - \Delta_I(Tf))\,.
$$
So let us consider
$$
 T(\Delta_I f) - \Delta_I(Tf)
 $$
 for $f= h_K$.  Let $K=I$ and let $I$ be even (meaning $|I|= 2^{-2k}$). Then
 $T(\Delta_I h_K) = T h_I = h_{I_+}- h_{I_-}$, and $\Delta_I (Th_K) = \Delta_I (Th_I) =0$.
 
 Let now $K=I$ and let $I$  be odd. Then $T(\Delta_I h_K) = T h_I = 0$,  and $\Delta_I (Th_K) = \Delta_I (Th_I) =\Delta_I ( h_{I_+}- h_{I_-}) =0$.
 
 We conclude that $TD(b, f) - D(b, Tf) $ will have terms: 
 $$
 \sum_{I\, even} \La b \Ra_I (f, h_I)( h_{I_+}- h_{I_-})= \sum_{\hat I\, even}  \La b \Ra_{\hat I} (f, h_{\hat I}) s(I, \hat I) h_I =\sum_{ I\, odd}  \La b \Ra_{\hat I} (f, h_{\hat I}) s(I, \hat I) h_I,
 $$
 where 
 \begin{equation}
 \label{sI}s(I, \hat I) = \begin{cases} 1 \quad if \,\,I= (\hat I)_+\\
  -1  \quad otherwise.
  \end{cases}
  \end{equation}
 There will be more terms.
 Denote by $s(I)$ the dyadic sibling of $I$. Denote by $\hat I$ the immediate predecessor of $I$. Let $K=\hat I$ and let $\hat I$  be even.
 Then $\Delta_I h_K =0$, so $T(\Delta_I h_K)=0$, but $Th_K = Th_{\hat I} = h_{(\hat I)_+} - h_{(\hat I)_-}$
 and  so $Th_K= \pm( h_I- h_{s(I)})$, and, thus, 
$\Delta_I Th_K = \pm h_I$. 
So  for $f=h_K$ with $K=\hat I$ and $\hat I$ being even  we have
$$ 
\text{On}\,\, I:\,\,T(\Delta_I f) - \Delta_I(Tf) = - h_I, \,\,\text{if}\,\, I= (\hat I)_+; \quad  T(\Delta_I f) - \Delta_I(Tf) =+ h_I \,\,\text{if}\,\, I= (\hat I)_-\,.
$$
 We conclude that $TD(b, f) - D(b, Tf) $ will have also the following terms: 
 $$
 -\sum_{\hat I\, even} \La b \Ra_{ I}(f, h_{\hat I}) s(I, \hat I) h_I = -\sum_{ I\, odd} \La b \Ra_{ I}(f, h_{\hat I}) s(I, \hat I) h_I\,.
  $$
  All other $h_K$ give zero contribution.
  
  We get finally that 
  \begin{equation}
  \label{DT}
   (T_1 D_b - D_b T_1)(f)=T_1D(b, f) - D(b, T_1f)  = \sum_{I\, odd} ( \La b \Ra_{\hat I} -  \La b \Ra_{I}) (f, h_{\hat I}) s(I, \hat I) h_I
   = \sum_{I\, odd} \frac{-1}{|\hat I|^{1/2}} (b, h_{\hat I})(f, h_{\hat I})  h_I \,.
\end{equation}

\bigskip

Now we need to commute with $T_2$:
$$
[T_2, [T_1, D_b]](f) =\sum_{I odd} \frac{h_I}{|I|^{1/2}} \Big(T_2\{ (b, h_{\hat I})(f, h_{\hat I})\} -  (b, h_{\hat I})(T_2f, h_{\hat I})\Big)\,.
$$
We use now notations:
$b_I := (b, h_I)\,.$ Then we rewrite the previous display as follows:

$$
[T_2, [T_1, D_b]](f) =\sum_{I odd} \frac{h_I}{|I|^{1/2}} (T_2 b_{\hat I}- b_{\hat I} T_2) (f_{\hat I})\,.
$$
Now multiplication in the second variable $f_{\hat I}\to b_{\hat I} \cdot f_{\hat I}$ can be decomposed to $D, \pi, Z$ parts.

\subsection{$DD$ part of repeated commutator}
$$
DD=D_2 ([T_2, [T_1, D_b]](f)) = \sum_{I odd} \frac{h_I}{|I|^{1/2}} D_{part}(T_2 b_{\hat I}- b_{\hat I} T_2)(f_{\hat I})=    \sum_{I odd} \frac{h_I}{|I|^{1/2}}\sum_{J odd} \frac{h_J}{|J|^{1/2}}(b_{\hat I}, h_{\hat J})(f_{\hat I}, h_{\hat J})=
$$
$$
 \sum_{I, J odd, odd} (b, h_{\hat I} \otimes h_{\hat J}) (f, h_{\hat I} \otimes h_{\hat J}) \frac{h_I(x) \cdot h_J(y)}{|I|^{1/2} |J|^{1/2}}=
$$
$$
 \sum_{R odd, odd} (b, h_{\hat R}) (f, h_{\hat R}) \frac{h_R}{|R|^{1/2}}\,.
$$

\subsection{$D\pi$ part of repeated commutator}
$$
D\pi = \pi_2([T_2, [T_1, D_b]](f)) = \sum_{I odd} \frac{h_I}{|I|^{1/2}} \pi_{part}(T_2 b_{\hat I}- b_{\hat I} T_2)(f_{\hat I})= \sum_{I odd} \frac{h_I}{|I|^{1/2}} (T_2 \pi_{b_{\hat I}}- \pi_{b_{\hat I}} T_2) (f_{\hat I})
$$
Let us recall what is  $\pi_b$.
$$
(b_{\hat I}, f_{\hat I}) \to \pi_{b_{\hat I}} f_{\hat I}:=\sum_{J} \sum_{J': J\subsetneq J'} \Delta_{J} b_{\hat I} \cdot\Delta_{J'} f_{\hat I} = \sum_{J} \La f_{\hat I}\Ra_J (b_{\hat I}, h_J) h_J\,.
$$
Now compute commutator (below $\tilde\one_J := \one_J/|J|$).
\begin{align}
\label{Dpi}
&D\pi= \sum_{I odd} \frac{h_I}{|I|^{1/2}}  \Big(T_2\pi_{b_{\hat I}} f_{\hat I} - \pi_{b_{\hat I}} (T_2 f_{\hat I})\Big)  = 
\\
&\sum_{I\, odd}  \sum_{ J even} (b, h_{\hat I}\otimes h_J) (f, h_{\hat I}\otimes \tilde \one_J) \frac{h_I}{|I|^{1/2}} \otimes T_2 h_J - \notag
\\
&\sum_{I\, odd} \sum_{ J}(b, h_{\hat I}\otimes h_J) (T_2 f, h_{\hat I}\otimes \tilde \one_J) \frac{h_I}{|I|^{1/2}}  \otimes  h_J\,.  \notag
\end{align}

Let us use the following notations for given $I, J$:
\begin{equation}
\label{nota1}
R=I\times J, \hat R = \hat I\times \hat J, \hat R_1=  \hat I\times  J,  \hat R_2 = I\times \hat J, h_R = h_I\otimes h_J
\end{equation}
\begin{equation}
\label{nota2}
\tilde \one_R = \tilde \one_I\otimes \tilde\one_J, h\one_R  = h_I\otimes \tilde\one_J, \one h_R= \tilde\one_I\otimes h_J
\end{equation}

\subsection{$DZ$ part of repeated commutator}

$$
DZ= Z_2([T_2, [T_1, D_b]](f)) = \sum_{I odd} \frac{h_I}{|I|^{1/2}} Z_{part}(T_2 b_{\hat I}- b_{\hat I} T_2)(f_{\hat I})= \sum_{I odd} \frac{h_I}{|I|^{1/2}} (T_2Z_{b_{\hat I}}- Z_{b_{\hat I}} T_2) (f_{\hat I})
$$
Let us recall what is  $Z_b$.
$$
(b_{\hat I}, f_{\hat I}) \to Z_{b_{\hat I}} f_{\hat I}:= \sum_{J}  (b_{\hat I}, h_J) ( f_{\hat I}, h_J) \tilde\one_J \,.
$$

Therefore,

\begin{align}
\label{DZ}
&DZ =  \sum_{I odd} \frac{h_I}{|I|^{1/2}} \Big(T_2Z_{b_{\hat I}}(f_{\hat I})- Z_{b_{\hat I}} (T_2f_{\hat I})\Big)=
\\
&\sum_{I\, odd} \sum_{ J even} (b, h_{\hat I}\otimes h_J) (f, h_{\hat I}\otimes h_J) \frac{h_I}{|I|^{1/2}} \otimes T_2 \tilde \one_J -\notag
\\
&\sum_{I\, odd}  \sum_{ J}(b, h_{\hat I}\otimes h_J) (T_2 f, h_{\hat I}\otimes,  h_J)\frac{h_I}{|I|^{1/2}} \otimes  \tilde \one_J\,.\notag
\end{align}

This finishes $DD, D\pi, DZ$ parts of repeated commutator.

Now we need to write $\pi D, Z\pi$ parts--they are just symmetric with respect to exchange the first and the second variables. But also we need $ZZ$ part, $\pi Z, Z\pi$ parts, and the most dangerous $\pi\pi$ part.

\subsection{$\pi$-parts}
Here it is.
$$
T_2 (T_1 \pi_b f - \pi_b (T_1 f)) - (T_1\pi_b (T_2 f) - \pi_b (T_1T_2 f)) =
$$
$$
= \big(T_2 T_1 \pi_b f  - T_1\pi_b (T_2 f)\big)-  \big( T_2\pi_b (T_1 f) -  \pi_b (T_1T_2 f)\big)=
$$
$$
\sum_I (T_2  b_I - b_I T_2) \La f \Ra_I T_1 h_I - \sum_I (T_2 b_I - b_I T_2)\La T_1 f\Ra_I h_I\,.
$$
Its $\pi D$ part is obtained  by calculating $D$ parts of commutator $(T_2  b_I - b_I T_2)$. So as before
\begin{align}
\label{piD}
&\pi D=
\\
&\sum_{I even}  \sum_{ J odd} (b, h_{ I}\otimes h_{\hat J}) (f,  \tilde \one_I \otimes h_{\hat J}) T_1 h_I \otimes \frac{h_J}{|J|^{1/2}}  -   \notag
\\
&\sum_{I} \sum_{ J odd}(b, h_{ I}\otimes h_{\hat J}) (T_1 f,  \tilde \one_I \otimes h_{J}\otimes) h_I \otimes\frac{h_J}{|J|^{1/2}}  \,.   \notag
\end{align}
We see that \eqref{piD} is symmetric to \eqref{Dpi}, as it should be.

\begin{align}
\label{piZ}
&\pi Z= 
\\
& \sum_{I even}\sum_J (b, h_I\otimes h_J) (f, \tilde\one_I\otimes h_J) T_1 h_I\otimes T_2 \tilde\one_J - \notag
\\
&\sum_{I even}\sum_J (b, h_I\otimes h_J) (T_2 f, \tilde\one_I\otimes h_J) T_1 h_I\otimes  \tilde\one_J -\notag
\\
& \sum_{I even}\sum_J (b, h_I\otimes h_J) (T_1 f, \tilde\one_I\otimes h_J) h_I\otimes T_2 \tilde\one_J +\notag
\\
& \sum_{I even}\sum_J (b, h_I\otimes h_J) (T_1T_2f, \tilde\one_I\otimes h_J)  h_I\otimes  \tilde\one_J\,.\notag
\end{align}

\bigskip

Now the most dangerous $\pi\pi$ part of repeated commutator follows:

\begin{align}
\label{pipi}
&\pi\pi= 
\\
&\sum_{I, J even, even} (b, h_I\otimes h_J) (f, \tilde\one_I\otimes \tilde\one_J) T_1 h_I\otimes T_2 h_J -\notag
\\
&\sum_{I any, J even} (b, h_I\otimes h_J) (T_1f, \tilde\one_I\otimes \tilde\one_J)  h_I\otimes T_2 h_J-\notag
\\
&\sum_{I  even, J any} (b, h_I\otimes h_J) (T_2f, \tilde\one_I\otimes \tilde\one_J) T_1 h_I\otimes  h_J +\notag
\\
&\sum_{I any , J any } (b, h_I\otimes h_J) (T_1T_2f, \tilde\one_I\otimes \tilde\one_J)  h_I\otimes  h_J\,.\notag
\end{align}

\subsection{$Z$-parts}

\begin{align}
\label{ZD}
&ZD =
\\
&\sum_{I} \sum_{J odd} (b, h_I\otimes h_{\hat J} ) (f, h_I\otimes h_{\hat J}) T_1 \tilde\one_I\otimes h_J /|J|^{1/2} - \notag
\\
&\sum_{I} \sum_{J odd} (b, h_I\otimes h_{\hat J} ) (T_1 f, h_I\otimes h_{\hat J})  \tilde\one_I\otimes h_J/J|^{1/2}\,.\notag
\end{align}

\begin{align}
\label{Zpi}
&Z\pi= 
\\
& \sum_{I even}\sum_J (b, h_I\otimes h_J) (f,  h_I\otimes \tilde\one_J) T_1  \tilde\one_I\otimes T_2 h_J - \notag
\\
&\sum_{I even}\sum_J (b, h_I\otimes h_J) (T_1 f,  h_I\otimes \tilde\one_J)   \tilde\one_I\otimes T_2 h_J -\notag
\\
& \sum_{I even}\sum_J (b, h_I\otimes h_J) (T_2 f,  h_I\otimes \tilde\one_J) T_1  \tilde\one_I\otimes  h_J+\notag
\\
& \sum_{I even}\sum_J (b, h_I\otimes h_J) (T_1T_2f,  h_I\otimes \tilde\one_J)   \tilde\one_I\otimes  h_J\,.\notag
\end{align}

\begin{align}
\label{ZZ}
&ZZ= 
\\
& \sum_{I even}\sum_J (b, h_I\otimes h_J) (f,  h_I\otimes h_J) T_1  \tilde\one_I\otimes T_2  \tilde\one_J - \notag
\\
&\sum_{I even}\sum_J (b, h_I\otimes h_J) (T_1 f,  h_I\otimes h_J)   \tilde\one_I\otimes T_2  \tilde\one_J -\notag
\\
& \sum_{I even}\sum_J (b, h_I\otimes h_J) (T_2 f, h_I\otimes h_J) T_1  \tilde\one_J\otimes   \tilde\one_J+\notag
\\
& \sum_{I even}\sum_J (b, h_I\otimes h_J) (T_1T_2f,  h_I\otimes h_J)   \tilde\one_J\otimes   \tilde\one_J\,.\notag
\end{align}

\newpage
%---------------------------------------------------------%
%---------------------Section Rectangular BMO-------------%
%---------------------------------------------------------%

\section{Necessity of $BMO_r$}
\label{BMOr}

In this section we show that if $\cT^b$ is bounded, then $b \in BMO_r$.
To do this, we test $\cT^b$ on a Haar function $h_{I''}\otimes h_{J''}$. This yields 25 operators, which we list below. Most paraproducts in Section \ref{S:Intro} (like $\pi\pi$ for example) yield 2-4 terms, so names such as $\pi\pi1$ below simply indicate that the term comes from $\pi\pi$. The operators below also have various restrictions on the parity of $I''$ and $J''$, i.e. most only appear for a certain combination of parities.  This is indicated in parentheses next to each operator.  

The operators are grouped by simplicity: Group I are the simplest operators containing finitely many terms, Groups IIa and IIb are product operators, and lastly the four operators in Group III are the most difficult. They are also color-coded...

\begin{center}
\textbf{Group I: Simple operators}
\end{center}

\begin{multicols}{2}
\begin{tcolorbox}[ams align*,colback=white!75!red]
& \boxed{\bf{DZ2}} \:\text{\bf ($I''$ even, $J''$ even)}\\
& (b, h_{I''}\otimes h_{J''_+}) \frac{1}{\sqrt{|I''|}}(h_{I''_+} + h_{I''_-}) \tilde\one_{J''_+}\\
 - & (b, h_{I''}\otimes h_{J''_-}) \frac{1}{\sqrt{|I''|}}(h_{I''_+} + h_{I''_-}) \tilde\one_{J''_-}
\end{tcolorbox}

\begin{tcolorbox}[ams align*,colback=white!75!red]
& \boxed{\bf{DD}} \:\text{($I''$ even, $J''$ even)}\\
& (b, h_{I''}\otimes h_{J''}) \frac{1}{\sqrt{|I''||J''|}} (h_{I''_+} + h_{I''_-}) (h_{J''_+} + h_{J''_-})
\end{tcolorbox}

\begin{tcolorbox}[ams align*,colback=white!75!red]
& \boxed{\bf{ZD2}} \:\text{($I''$ even, $J''$ even)}\\
& (b, h_{I''_+}\otimes h_{J''}) \frac{1}{\sqrt{|J''|}} \tilde\one_{I''_+} (h_{J''_+} + h_{J''_-})\\
 - & (b, h_{I''_-}\otimes h_{J''}) \frac{1}{\sqrt{|J''|}} \tilde\one_{I''_-} (h_{J''_+} + h_{J''_-})
\end{tcolorbox}

\begin{tcolorbox}[ams align*,colback=white!75!red]
& \boxed{\bf{ZZ4}} \:\text{($I''$ even, $J''$ even)}\\
& (b, h_{I''_+}\otimes h_{J''_+}) \tilde\one_{I''_+} \tilde\one_{J''_+}
	- (b, h_{I''_+}\otimes h_{J''_-}) \tilde\one_{I''_+} \tilde\one_{J''_-}\\
 - & (b, h_{I''_-}\otimes h_{J''_+}) \tilde\one_{I''_-} \tilde\one_{J''_+}
	+ (b, h_{I''_-}\otimes h_{J''_-}) \tilde\one_{I''_-} \tilde\one_{J''_-}
\end{tcolorbox}
\end{multicols}

\begin{multicols}{2}
\begin{tcolorbox}[ams align*,colback=white!75!green]
& \boxed{\bf{ZD1}} \:\text{\bf ($J''$ even)}\\
& -(b, h_{I''}\otimes h_{J''}) \frac{1}{\sqrt{|J''|}} (T_1 \tilde\one_{I''}) (h_{J''_+} + h_{J''_-})
\end{tcolorbox}

\begin{tcolorbox}[ams align*,colback=white!75!blue]
& \boxed{\bf{DZ1}} \:\text{\bf ($I''$ even)}\\
& -(b, h_{I''}\otimes h_{J''}) \frac{1}{\sqrt{|I''|}} (h_{I''_+} + h_{I''_-}) (T_2 \tilde\one_{J''})
\end{tcolorbox}
\end{multicols}

\begin{multicols}{2}
\begin{tcolorbox}[ams align*,colback=white!75!green]
& \boxed{\bf{ZZ2}} \:\text{\bf ($J''$ even)}\\
& - (b, h_{I''}\otimes h_{J''_+}) (T_1 \tilde\one_{I''}) \tilde\one_{J''_+}\\
& + (b, h_{I''}\otimes h_{J''_-}) (T_1 \tilde\one_{I''}) \tilde\one_{J''_-}
\end{tcolorbox}

\begin{tcolorbox}[ams align*,colback=white!75!blue]
& \boxed{\bf{ZZ3}} \:\text{\bf ($I''$ even)}\\
& - (b, h_{I''_+}\otimes h_{J''}) \tilde\one_{I''_+} (T_2 \tilde\one_{J''}) \\
& + (b, h_{I''_-}\otimes h_{J''}) \tilde\one_{I''_-} (T_2 \tilde\one_{J''}) 
\end{tcolorbox}
\end{multicols}

\begin{center}
\begin{tcolorbox}[ams align*,colback=white!75!orange]
& \boxed{\bf{ZZ1}} \:\text{\bf (no restrictions)}\\
& (b, h_{I''}\otimes h_{J''}) (T_1 \tilde\one_{I''}) (T_2 \tilde\one_{J''})
\end{tcolorbox}
\end{center}

\begin{center}
\textbf{Group IIa: Product operators, summation over $I$}
\end{center}

\begin{tcolorbox}[ams align*,colback=white!75!red]
& \boxed{\bf{\pi D2}} \:\text{\bf ($I''$ even, $J''$ even)}\\
& \left[ \sum_{I\subsetneq I''_+} (b, h_I\otimes h_{J''}) h_{I''_+}(I) h_I \right]\frac{h_{J''_+} + h_{J''_-}}{\sqrt{|J''|}}
 - \left[ \sum_{I\subsetneq I''_-} (b, h_I\otimes h_{J''}) h_{I''_-}(I) h_I \right]\frac{h_{J''_+} + h_{J''_-}}{\sqrt{|J''|}} 
\end{tcolorbox}

\begin{tcolorbox}[ams align*,colback=white!75!red]
& \boxed{\bf{\pi Z2}} \:\text{\bf ($I''$ even, $J''$ even)}\\
&   \left[ \sum_{I\subsetneq I''_+} (b, h_I\otimes h_{J''_+}) h_{I''_+}(I) h_I \right] \tilde\one_{J''_+}
 - \left[ \sum_{I\subsetneq I''_+} (b, h_I\otimes h_{J''_-}) h_{I''_+}(I) h_I \right] \tilde\one_{J''_-}\\ 
& - \left[ \sum_{I\subsetneq I''_-} (b, h_I\otimes h_{J''_+}) h_{I''_-}(I) h_I \right] \tilde\one_{J''_+}
 + \left[ \sum_{I\subsetneq I''_-} (b, h_I\otimes h_{J''_-}) h_{I''_-}(I) h_I \right] \tilde\one_{J''_-}
\end{tcolorbox}

\begin{multicols}{2}
\begin{tcolorbox}[ams align*,colback=white!75!green]
& \boxed{\bf{\pi D1}} \:\text{\bf ($J''$ even)}\\
& - \left[ \sum_{\substack{I\subsetneq I''\\ I \text{ even}}} (b, h_I\otimes h_{J''}) h_{I''}(I) (T_1h_I) \right]\frac{h_{J''_+} + h_{J''_-}}{\sqrt{|J''|}}
\end{tcolorbox}

\begin{tcolorbox}[ams align*,colback=white!75!blue]
& \boxed{\bf{\pi Z1}} \:\text{\bf ($I''$ even)}\\
& - \left[ \sum_{I\subsetneq I''_+} (b, h_I\otimes h_{J''}) h_{I''_+}(I) h_I \right] (T_2 \tilde\one_{J''})\\
& + \left[ \sum_{I\subsetneq I''_-} (b, h_I\otimes h_{J''}) h_{I''_-}(I) h_I \right] (T_2 \tilde\one_{J''})
\end{tcolorbox}

\begin{tcolorbox}[ams align*,colback=white!75!green]
& \boxed{\bf{\pi Z4}} \:\text{\bf ($J''$ even)}\\
& - \left[ \sum_{\substack{I\subsetneq I''\\ I \text{ even}}} (b, h_I\otimes h_{J''_+}) h_{I''}(I) (T_1h_I) \right]
	\tilde\one_{J''_+}\\
& + \left[ \sum_{\substack{I\subsetneq I''\\ I \text{ even}}} (b, h_I\otimes h_{J''_-}) h_{I''}(I) (T_1h_I) \right]
	\tilde\one_{J''_-}
\end{tcolorbox}

\begin{tcolorbox}[ams align*,colback=white!75!orange]
& \boxed{\bf{\pi Z3}} \:\text{\bf (no restrictions)}\\
& \left[ \sum_{\substack{I\subsetneq I''\\ I \text{ even}}} (b, h_I\otimes h_{J''}) h_{I''}(I) (T_1h_I) \right]
	(T_2\tilde\one_{J''}).
\end{tcolorbox}
\end{multicols}

\begin{center}
\textbf{Group IIb: Product operators, summation over $J$}
\end{center}

\begin{tcolorbox}[ams align*,colback=white!75!red]
& \boxed{\bf{D\pi 2}} \:\text{\bf ($I''$ even, $J''$ even)}\\
&  \frac{h_{I''_+} + h_{I''_-}}{\sqrt{|I''|}} 
	\left[ \sum_{J\subsetneq J''_+} (b, h_{I''}\otimes h_J) h_{J''_+}(J) h_J \right]
 - \frac{h_{I''_+} + h_{I''_-}}{\sqrt{|I''|}} 
	\left[ \sum_{J\subsetneq J''_-} (b, h_{I''}\otimes h_J) h_{J''_-}(J) h_J \right]
\end{tcolorbox}

\begin{tcolorbox}[ams align*,colback=white!75!red]
& \boxed{\bf{Z\pi4}} \:\text{\bf ($I''$ even, $J''$ even)}\\
&  \tilde\one_{I''_+} \left[ \sum_{J\subsetneq J''_+} (b, h_{I''_+}\otimes h_J) h_{J''_+}(J) h_J \right] 
 - \tilde\one_{I''_-} \left[ \sum_{J\subsetneq J''_+} (b, h_{I''_-}\otimes h_J) h_{J''_+}(J) h_J \right] \\ 
 - & \tilde\one_{I''_+} \left[ \sum_{J\subsetneq J''_-} (b, h_{I''_+}\otimes h_J) h_{J''_-}(J) h_J \right] 
 + \tilde\one_{I''_-} \left[ \sum_{J\subsetneq J''_-} (b, h_{I''_-}\otimes h_J) h_{J''_-}(J) h_J \right] 
\end{tcolorbox}

\begin{multicols}{2}
\begin{tcolorbox}[ams align*,colback=white!75!blue]
& \boxed{\bf{ D\pi1}} \:\text{\bf ($I''$ even)}\\
& - \frac{h_{I''_+} + h_{I''_-}}{\sqrt{|I''|}}
 \left[ \sum_{\substack{J\subsetneq J''\\ J \text{ even}}} (b, h_{I''}\otimes h_J) h_{J''}(J) (T_2h_J) \right]
\end{tcolorbox}

\begin{tcolorbox}[ams align*,colback=white!75!green]
& \boxed{\bf{Z\pi2}} \:\text{\bf ($J''$ even)}\\
& - (T_1 \tilde\one_{I''}) \left[ \sum_{J\subsetneq J''_+} (b, h_{I''}\otimes h_J) h_{J''_+}(J) h_J \right] \\
& + (T_1 \tilde\one_{I''}) \left[ \sum_{J\subsetneq J''_-} (b, h_{I''}\otimes h_J) h_{J''_-}(J) h_J \right]
\end{tcolorbox}

\begin{tcolorbox}[ams align*,colback=white!75!blue]
& \boxed{\bf{Z\pi3}} \:\text{\bf ($I''$ even)}\\
& - \tilde\one_{I''_+}
	\left[ \sum_{\substack{J\subsetneq J''\\ J \text{ even}}} (b, h_{I''_+}\otimes h_J) h_{J''}(J) (T_2h_J) \right]\\
+ & \tilde\one_{I''_-}
	\left[ \sum_{\substack{J\subsetneq J''\\ J \text{ even}}} (b, h_{I''_-}\otimes h_J) h_{J''}(J) (T_2h_J) \right]
\end{tcolorbox}

\begin{tcolorbox}[ams align*,colback=white!75!orange]
& \boxed{\bf{Z\pi1}} \:\text{\bf (no restrictions)}\\
& (T_1\tilde\one_{I''})
	\left[ \sum_{\substack{J\subsetneq J''\\ J \text{ even}}} (b, h_{I''}\otimes h_J) h_{J''}(J) (T_2h_J) \right].
\end{tcolorbox}
\end{multicols}

\begin{center}
\textbf{Group III: The difficult operators}
\end{center}

\begin{tcolorbox}[ams align*,colback=white!75!red]
& \boxed{\bf{\pi\pi 4}} \:\text{\bf ($I''$ even, $J''$ even)}\\
&  \sum_{\substack{I\subsetneq I''_+ \\ J \subsetneq J''_+}} 
		(b, h_I\otimes h_J) h_{I''_+}(I) h_{J''_+}(J) h_I\otimes h_J
	- \sum_{\substack{I\subsetneq I''_+ \\ J \subsetneq J''_-}} 
		(b, h_I\otimes h_J) h_{I''_+}(I) h_{J''_-}(J) h_I\otimes h_J\\
- & \sum_{\substack{I\subsetneq I''_- \\ J \subsetneq J''_+}} 
		(b, h_I\otimes h_J) h_{I''_-}(I) h_{J''_+}(J) h_I\otimes h_J
	+ \sum_{\substack{I\subsetneq I''_- \\ J \subsetneq J''_-}} 
		(b, h_I\otimes h_J) h_{I''_-}(I) h_{J''_-}(J) h_I\otimes h_J
\end{tcolorbox}

\begin{tcolorbox}[ams align*,colback=white!75!blue]
& \boxed{\bf{\pi\pi3}} \:\text{\bf ($I''$ even)}\\
& - \sum_{\substack{I\subsetneq I''_+ \\ J \text{ even}, J \subsetneq J''}} (b, h_I\otimes h_J) 
		h_{I''_+}(I) h_{J''}(J) h_I \otimes T_2h_J
+ \sum_{\substack{I\subsetneq I''_- \\ J \text{ even}, J \subsetneq J''}} (b, h_I\otimes h_J) 
		h_{I''_-}(I) h_{J''}(J) h_I \otimes T_2h_J
\end{tcolorbox}

\begin{tcolorbox}[ams align*,colback=white!75!green]
& \boxed{\bf{\pi\pi2}} \:\text{\bf ($J''$ even)}\\
& - \sum_{\substack{I \text{ even}, I\subsetneq I'' \\ J \subsetneq J''_+}} (b, h_I\otimes h_J) 
		h_{I''}(I) h_{J''_+}(J) T_1h_I \otimes h_J
+ \sum_{\substack{I \text{ even}, I\subsetneq I'' \\ J \subsetneq J''_-}} (b, h_I\otimes h_J) 
		h_{I''}(I) h_{J''_-}(J) T_1h_I \otimes h_J
\end{tcolorbox}

\begin{tcolorbox}[ams align*,colback=white!75!orange]
& \boxed{\bf{\pi\pi1}} \:\text{\bf (no restrictions)}\\
& - \sum_{\substack{I \text{ even}, I\subsetneq I'' \\ J \text{ even}, J \subsetneq J''}} (b, h_I\otimes h_J) 
		h_{I''}(I) h_{J''}(J) T_1h_I \otimes T_2h_J
\end{tcolorbox}

The lemma below will bound most of these operators:

\begin{lemma}
\label{L:TbonP}
If $\cT^b$ is $L^2$-bounded, then the (one-parameter) commutators
	$$\frac{1}{\sqrt{|J|}} [T_1, b_J] \:\:\:\text{ and }\:\:\: \frac{1}{\sqrt{|I|}}[T_2, b_I]$$
are uniformly bounded, for any $I, J \in \cD$. Consequently,
	\begin{equation}
	\label{E:1parBMOx}
	\sum_{I' \subset I} \left|(b, h_{I'}\otimes h_J)\right|^2 \lesssim |I||J|,
	\end{equation}
and
	\begin{equation}
	\label{E:1parBMOy}
	\sum_{J' \subset J} \left|(b, h_{I}\otimes h_{J'})\right|^2 \lesssim |I||J|,
	\end{equation}	
for all $I, J \in \cD$.
\end{lemma}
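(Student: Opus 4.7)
The starting point is the identity
\begin{equation*}
\cT^b(f\otimes g)(x,y) \;=\; \sum_{K\in\cD} ([T_1,b_K]f)(x)\cdot[T_2,h_K]g(y),
\end{equation*}
where $b_K(x):=(b(x,\fdot),h_K)$ is the $K$-th Haar coefficient of $b$ in the $y$-variable and $[T_2,h_K]g := T_2(h_K g)-h_K T_2 g$. This is immediate from Haar-expanding $b(x,y)=\sum_K b_K(x)h_K(y)$ in $y$ and noting that $[T_1,b]$ acts on tensors as $g(y)\cdot[T_1,M_{b(\fdot,y)}]f(x)$; commuting with $T_2$ (which only touches $y$) produces the formula.

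The plan is to test this identity on $f\otimes h_J$ for dyadic $J$ of carefully chosen parity and exploit orthogonality on the $y$-side. A short case-check gives $[T_2,h_K]h_J=0$ whenever $K\supsetneq J$ or $K\cap J=\emptyset$. When $J$ is odd (so $T_2h_J=0$), the surviving $y$-factors reduce to $[T_2,h_K]h_J=\pm|J|^{-1/2}(h_{K_+}-h_{K_-})$ for $K\subsetneq J$ even, zero for $K\subsetneq J$ odd, and $T_2\tilde\one_J$ for $K=J$. Since $T_2\tilde\one_J$ has Haar support at scales $\ge|J|$ while the nontrivial $K\subsetneq J$ terms live at scales $<|J|/2$, all these $y$-factors are pairwise orthogonal in $L^2(\bR_y)$, each with squared norm $2/|J|$. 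The bound $\|\cT^b(f\otimes h_J)\|_2\le\|\cT^b\|\|f\|_2$ then yields
\begin{equation*}
\sum_{K\subsetneq J,\;K\text{ even}} \|[T_1,b_K]f\|_2^2 \;\le\; \tfrac{|J|}{2}\,\|\cT^b\|^2\,\|f\|_2^2,
\end{equation*}
and choosing $J=\hat K$ (which is odd precisely when $K$ is even) gives $\|[T_1,b_K]\|\le\sqrt{|K|}\,\|\cT^b\|$ for every even $K$.

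The main obstacle is the case of odd $K$: when $J$ is even the $y$-factors $[T_2,h_K]h_J$ are no longer pairwise orthogonal, because $T_2h_{\hat K}$ produces a Haar function $h_K$ at the same scale as the $K'=K$ term. To circumvent this, for odd $K$ take $J=\hat K$ (which is even) and pair $\cT^b(f\otimes h_J)$ against $\phi\otimes h_K$. A direct check shows only $K'=K$ and $K'=J$ contribute, producing
\begin{equation*}
\La\cT^b(f\otimes h_J),\phi\otimes h_K\Ra \;=\; \pm\sqrt{2}\,|J|^{-1/2}\La[T_1,b_K]f,\phi\Ra \;-\;|J|^{-1/2}\La[T_1,b_J]f,\phi\Ra.
\end{equation*}
Since the even case already controls $[T_1,b_J]$ and the left-hand side is bounded by $\|\cT^b\|\|f\|_2\|\phi\|_2$, rearranging yields $\|[T_1,b_K]\|\lesssim\sqrt{|K|}\,\|\cT^b\|$ for odd $K$ as well. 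The parallel bound for $|I|^{-1/2}[T_2,b_I]$ follows by symmetry between the two variables.

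Finally, \eqref{E:1parBMOx} and \eqref{E:1parBMOy} follow from the standard one-parameter dyadic commutator/BMO characterization: the bound $\|[T_1,c]\|\le M$ forces the $BMO$ estimate $\sum_{I'\subset I}|(c,h_{I'})|^2\lesssim M^2|I|$. Applying this to $c=b_J$ with $M\asymp\sqrt{|J|}\|\cT^b\|$ and using $(b_J,h_{I'})=(b,h_{I'}\otimes h_J)$ yields \eqref{E:1parBMOx}; \eqref{E:1parBMOy} is obtained symmetrically.
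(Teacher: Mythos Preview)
Your identity $\cT^b(f\otimes g)=\sum_{K}([T_1,b_K]f)\otimes[T_2,h_K]g$ is correct and is a clean way to organize the computation; the paper does essentially the same test on $p\otimes h_{J''}$ but writes the result as five explicit pieces $\mathtt{P_1},\dots,\mathtt{P_5}$ and extracts orthogonal components from those. Your even-$K$ argument (take $J$ odd, use pairwise orthogonality of the $y$-factors) is correct and parallels the paper's odd-$J''$ case.

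The odd-$K$ step, however, contains a genuine error. With $J=\hat K$ even and, say, $K=J_+$, one has
\[
[T_2,h_K]h_J \;=\; T_2(h_Kh_J)-h_K\,T_2h_J \;=\; |J|^{-1/2}\,T_2h_K-h_K(h_{J_+}-h_{J_-}) \;=\; 0-\tilde\one_K \;=\; -\tilde\one_K,
\]
since $K$ is odd. But $\langle\tilde\one_K,h_K\rangle=0$, so the $K'=K$ term contributes \emph{nothing} to $\langle\cT^b(f\otimes h_J),\phi\otimes h_K\rangle$; only $K'=J$ survives, and that carries $[T_1,b_J]$, not $[T_1,b_K]$. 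The claimed coefficient $\pm\sqrt2\,|J|^{-1/2}$ in front of $\langle[T_1,b_K]f,\phi\rangle$ is therefore wrong, and the bootstrap from the even case to the odd case collapses.

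There is an easy repair within your framework: for odd $K$, test on $f\otimes h_K$ itself (so $T_2h_K=0$) and pair against $\phi\otimes h_K$. Now the only surviving $K'$ is $K'=K$, with $y$-factor $[T_2,h_K]h_K=T_2\tilde\one_K$; since $\hat K$ is even one checks $\langle T_2\tilde\one_K,h_K\rangle=|\hat K|^{-1/2}\neq0$, which gives $\|[T_1,b_K]\|\le\sqrt{2|K|}\,\|\cT^b\|$ directly, with no appeal to the even case. The paper instead handles this parity by testing with $J''$ \emph{even} and isolating the even-$J_0$ Haar components of $\mathtt{P_4}+\mathtt{P_5}$, which are orthogonal to everything else; this yields the stronger summed estimate $\sum_{J\subsetneq J'',\ J\text{ even}}\|[T_1,b_J]p\|_2^2\lesssim|J''|\,\|p\|_2^2$, from which the individual bounds follow.
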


\begin{proof}
We prove the result in the first parameter $x$, the other being symmetric. We test $\cT^b$ on a function
$f(x, y) = p(x) \otimes h_{J''}(y)$, where $p\in L^2(\bR)$ and $J'' \in \cD$. Note that if $\cT^b$ is bounded,
$\|\cT^b f\|_2^2 \lesssim \|f\|_2^2 = \|p\|_2^2$. 

Computing $\cT^b(p\otimes \hdp)$ yields five operators: two with no restrictions on $J''$:
	\begin{eqnarray*}
	{\tt P_1} &=& \sum_{\substack{J \text{ even,} \\ J \subsetneq J''}} \left( [T_1, b_J]p \right) 
			\hdp(J) (T_2 h_J),\\
	{\tt P_2} &=& \left([T_1, b_{J''}]p\right) (T_2 \tilde\one_{J''}),
	\end{eqnarray*}
and three which are only non-zero if $J''$ is \textit{even}:
	\begin{eqnarray*}
	{\tt P_3} &=& - \left([T_1, b_{J''}]p\right) \frac{1}{\sqrt{|J''|}} (h_{J''_{-}} + h_{J''_{+}}),\\
	{\tt P_4} &=& - \sum_{J \subsetneq J''_+} \left([T_1, b_J]p\right) h_{J''_+}(J) h_J +
			\sum_{J \subsetneq J''_-} \left([T_1, b_J]p\right) h_{J''_-}(J) h_J,\\
	{\tt P_5} &=& - \left([T_1, b_{J''_+}]p\right) \tilde\one_{J''_+}
				+ \left([T_1, b_{J''_-}]p\right) \tilde\one_{J''_-}.
	\end{eqnarray*}

Suppose first that $J''$ is \textit{even}. In this case, we have all five operators above. We look at the Haar coefficients of each:

\begin{equation*}
\left({\tt P_1}, h_{I_0}\otimes h_{J_0}\right) = \left\{ \begin{array}{rl}
	\left([T_1, b_{J_0}]p, h_{I_0}\right) s(J_0, \hat J_0) \hdp (\hat J_0), & \text{ if } J_0 \text{ is odd, and }
		\hat J_0 \subsetneq J''\\
	0, & \text{ otherwise.}
\end{array}\right.
\end{equation*}

\begin{equation*}
\left({\tt P_2}, h_{I_0}\otimes h_{J_0}\right) = \left\{ \begin{array}{rl}
	\left([T_1, b_{J''}]p, h_{I_0}\right) s(J_0, \hat J_0) h_{\hat J_0}(J''),  & \text{ if } J_0
		\text{ is odd, and } \hat J_0\supsetneq J''	\\
	0, & \text{ otherwise.}
\end{array}\right.
\end{equation*}

\noindent Since $J''$ is even, the inclusion conditions on $J_0$ are the same as $J_0 \subsetneq J''_\pm$ 
for ${\tt P_1}$, and $J_0 \supsetneq J''$ for ${\tt P_2}$.

\begin{equation*}
\left({\tt P_3}, h_{I_0}\otimes h_{J_0}\right) = \left\{ \begin{array}{rl}
	- \left([T_1, b_{J''}]p, h_{I_0}\right) \frac{1}{\sqrt{|J''|}},  & \text{ if } 
			J_0 = J''_- \text{ or }	J_0 = J''_+\\
	0, & \text{ otherwise.}
\end{array}\right.
\end{equation*}

\begin{equation*}
\left({\tt P_4}, h_{I_0}\otimes h_{J_0}\right) = \left\{ \begin{array}{rl}
	- \left([T_1, b_{J_0}]p, h_{I_0}\right) h_{J''_+}(J_0),  & \text{ if } J_0 \subsetneq J''_+	\\
	+ \left([T_1, b_{J_0}]p, h_{I_0}\right) h_{J''_-}(J_0),  & \text{ if } J_0 \subsetneq J''_-	\\
	0, & \text{ otherwise.}
\end{array}\right.
\end{equation*}

\begin{equation*}
\left({\tt P_5}, h_{I_0}\otimes h_{J_0}\right) = \left\{ \begin{array}{rl}
	- \left([T_1, b_{J''_+}]p, h_{I_0}\right) h_{J_0}(J''_+)
	+ \left([T_1, b_{J''_-}]p, h_{I_0}\right) h_{J_0}(J''_-), & \text{ if }	J_0 \supseteq J''\\
	0, & \text{ otherwise.}
\end{array}\right.
\end{equation*}

Note that for ${\tt P_1 + P_2 + P_3}$, the second component Haar functions run through all the \textit{odd} dyadic intervals intersecting $J''$. For ${\tt P_4 + P_5}$, the second component Haar functions run through \textit{all} dyadic intervals intersecting $J''$, except $J''_{\pm}$. Pick the components of ${\tt P_4}$ and ${\tt P_5}$ with even $J$ on the second coordinate:
	\begin{eqnarray*}
	{\tt P}_{\text{even}} &:=& - \sum_{\substack{J \subsetneq J''_+ \\ J \text{ even}}} 
					\left([T_1, b_J]p\right) h_{J''_+}(J) h_J +
					\sum_{\substack{J \subsetneq J''_- \\ J \text{ even}}} 
					\left([T_1, b_J]p\right) h_{J''_-}(J) h_J\\
				&& - \left([T_1, b_{J''_+}]p\right) \sum_{\substack{J \supsetneq J''_+ \\ J \text{ even}}}
					h_J(J''_+)h_J
				+ \left([T_1, b_{J''_-}]p\right) \sum_{\substack{J \supsetneq J''_- \\ J \text{ even}}}
					h_J(J''_-)h_J.
	\end{eqnarray*}
Then $\cT^b(p\otimes\hdp) - {\tt P}_{\text{even}}$ and ${\tt P}_{\text{even}}$ are \textit{orthogonal}, so if $\cT^b$ is bounded then these components must be individually bounded. In particular, ${\tt P}_{\text{even}}$ is bounded.
Clearly
	\begin{eqnarray*}
	\|{\tt P}_{\text{even}}\|_2^2 &\geq& \sum_{\substack{J \subsetneq J''_+ \\ J \text{ even}}}
		\sum_{I_0} \left([T_1, b_J]p, h_{I_0}\right)^2 \frac{1}{|J''_+|}
		+ \sum_{\substack{J \subsetneq J''_- \\ J \text{ even}}}
		\sum_{I_0} \left([T_1, b_J]p, h_{I_0}\right)^2 \frac{1}{|J''_-|}\\
	&=& \frac{2}{|J''|} \sum_{\substack{J \subsetneq J'' \\ J \text{ even}}} \left\| [T_1, b_J]p \right\|_2^2,
	\end{eqnarray*}
giving us that
	\begin{equation}
	\label{E:rec-evens}
	\sum_{\substack{J \subsetneq J'' \\ J \text{ even}}} \left\| [T_1, b_J]p \right\|_2^2 
		\lesssim |J''| \|p\|_2^2, \:\text{ for all } J'' \text{ even}.
	\end{equation}
	
Now suppose $J''$ is \textit{odd}, so we only have the operators ${\tt P_1}$ and ${\tt P_2}$. As before,
\begin{equation*}
\left({\tt P_1}, h_{I_0}\otimes h_{J_0}\right) = \left\{ \begin{array}{rl}
	\left([T_1, b_{J_0}]p, h_{I_0}\right) s(J_0, \hat J_0) \hdp (\hat J_0), & \text{ if } J_0 \text{ is odd, and }
		J_0 \subsetneq J''\\
	0, & \text{ otherwise.}
\end{array}\right.
\end{equation*}

\begin{equation*}
\left({\tt P_2}, h_{I_0}\otimes h_{J_0}\right) = \left\{ \begin{array}{rl}
	\left([T_1, b_{J''}]p, h_{I_0}\right) s(J_0, \hat J_0) h_{\hat J_0}(J''),  & \text{ if } J_0
		\text{ is odd, and } J_0\supsetneq J''	\\
	0, & \text{ otherwise.}
\end{array}\right.
\end{equation*}

Obviously ${\tt P_1}$ and ${\tt P_2}$ are orthogonal, so
	$\|\cT^b(p\otimes\hdp)\|_2^2 = \|{\tt P_1}\|^2 + \|{\tt P_2}\|_2^2$ for odd  $J''$.
From $\|{\tt P_1}\|_2^2 \leq \|\cT^b\|_2^2$, it is easy to see that
	\begin{equation}
	\label{E:rec-odds}
	\sum_{\substack{J \subsetneq J'' \\ J \text{ odd}}} \left\| [T_1, b_J]p \right\|_2^2 
		\lesssim |J''| \|p\|_2^2, \:\text{ for all } J'' \text{ odd}.
	\end{equation}

From \eqref{E:rec-evens} and \eqref{E:rec-odds}, it is obvious that, if $\cT^b$ is bounded, then
	\begin{equation}
	\label{E:rec-sum}
	\sum_{J \subset J''} \left\| [T_1, b_J] \right\|_2^2 \lesssim |J''|, \text{ for all } J''\in\cD.
	\end{equation}
This is a much stronger assertion than the Lemma needs, and immediately implies \eqref{E:1parBMOx} using the one-parameter implication: 
$$\frac{1}{\sqrt{|J|}}[T_1, b_{J}] \text{ is } L^2\text{-bounded} \Rightarrow \frac{1}{\sqrt{|J|}}b_{J} \in BMO(\bR)
\Rightarrow \sum_{I' \subset I} \left|(b, h_{I'}\otimes h_J)\right|^2 \lesssim |I||J|.
$$
\end{proof}

\begin{theorem}
\label{T:BMO-rec}
If $\cT^b$ is bounded, then $b \in BMO_r$.
\end{theorem}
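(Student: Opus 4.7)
The plan is to fix $R = I \times J$, choose an even-even dyadic rectangle $(I'', J'')$ with $R \subset I'' \times J''$ and $|I''||J''| \lesssim |R|$ (the even grandparent or great-grandparent of $I \times J$ works), and test the bound $\|\cT^b f\|_2 \lesssim \|f\|_2$ on $f = h_{I''} \otimes h_{J''}$. By construction every dyadic $(I_*, J_*) \subsetneq R$ lies in the \emph{strict interior} of $(I'', J'')$: $I_* \subsetneq I''_{\pm}$ and $J_* \subsetneq J''_{\pm}$ both hold with strict inclusion.

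Using the one-parameter BMO from Lemma \ref{L:TbonP}, one first shows that each of the 21 operators in Groups I, IIa, IIb has $L^2$-norm $\lesssim 1$. A typical computation: for $\pi D1$, Parseval yields $\|\pi D1\|_2^2 \lesssim \frac{1}{|I''||J''|}\sum_{I \subsetneq I''} |(b, h_I \otimes h_{J''})|^2 \lesssim 1$ by \eqref{E:1parBMOx}. Similar calculations, together with the auxiliary bound $\|T_j \tilde\one_K\|_2^2 \lesssim 1/|K|$ coming from the Haar expansion of $\tilde\one_K$, cover the remaining operators in Groups I, IIa, IIb. Subtracting, one finds $\|\pi\pi 1 + \pi\pi 2 + \pi\pi 3 + \pi\pi 4\|_2 \lesssim 1$.

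To extract rectangular BMO, note that at a strict-interior position $(I_*, J_*)$ no operator from Groups I, IIa, IIb contributes: each involves a factor such as $h_{I''_{\pm}}$, $\tilde\one_{I''_{\pm}}$, $T_j \tilde\one_{I''}$, etc., forcing one of the Haar indices to be $I''_{\pm}$, $J''_{\pm}$, or a strict ancestor of $I''$ or $J''$. Thus the Haar coefficient of $\cT^b f$ at a strict-interior position is purely a $\pi\pi$-contribution, and parity dictates which $\pi\pi i$ appear: even-even (EE) receives only $\pi\pi 4$; even-odd (EO) receives $\pi\pi 3 + \pi\pi 4$; odd-even (OE) receives $\pi\pi 2 + \pi\pi 4$; odd-odd (OO) receives all four. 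Moreover $\pi\pi 4$ at $(I_*, J_*)$ produces $\pm\frac{2}{\sqrt{|I''||J''|}}(b, h_{I_*}\otimes h_{J_*})$ --- the target coefficient itself --- while $\pi\pi 1, \pi\pi 2, \pi\pi 3$ each produce a $b$-coefficient at a parent rectangle $(\hat I_*, \hat J_*)$, $(\hat I_*, J_*)$, or $(I_*, \hat J_*)$, which lies in a parity class with strictly fewer odd indices.

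One then bootstraps in order of increasing number of odd indices. The EE case is isolated: $\sum_{\text{EE}}|(b, h_{I_*}\otimes h_{J_*})|^2 \lesssim |I''||J''|$ directly. For EO apply $|x_4|^2 \leq 2|x_3+x_4|^2 + 2|x_3|^2$ pointwise, with $\sum|x_3+x_4|^2 \leq \|\pi\pi 1 + \cdots + \pi\pi 4\|_2^2 \lesssim 1$ and $\sum|x_3|^2$ controlled by the EE estimate (since $\pi\pi 3$ at an EO position produces an EE-class $b$-coefficient). OE is symmetric, and OO follows from all three previous bounds. Summing over parities yields $\sum_{(I_*, J_*) \subsetneq R}|(b, h_{I_*}\otimes h_{J_*})|^2 \lesssim |R|$, i.e.\ $b \in BMO_r$. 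The main obstacle is the bookkeeping in both phases: cleanly verifying that all 21 non-$\pi\pi$ operators have Haar supports disjoint from the strict interior, and carrying out the parity-dependent Cauchy--Schwarz bootstrap with correct constants.
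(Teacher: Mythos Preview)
Your proof is correct and rests on the same two pillars as the paper's: the one-parameter bounds of Lemma~\ref{L:TbonP} dispose of Groups I and II, and a parity bootstrap disentangles the four $\pi\pi$ terms. The organization, however, is different. The paper varies the parity of the \emph{test rectangle} $(I'',J'')$: with $(I'',J'')$ odd--odd only $\pi\pi1$ survives and yields the even--even Haar sum; this feeds into the odd--even case where $\pi\pi2$ is isolated, and so on up to the even--even case where $\pi\pi4$ gives the full rectangular sum. You instead fix $(I'',J'')$ even--even once and for all and bootstrap on the parity of the \emph{output position} $(I_*,J_*)$, using that the Group I/II operators have Haar support disjoint from the strict interior of $I''\times J''$.

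Both routes work. The paper's is slightly leaner: by choosing the right parity of $(I'',J'')$ the unwanted $\pi\pi i$ simply do not appear, so there is no need to verify the strict-interior Haar-support claim or to carry out the pointwise Cauchy--Schwarz step. Your route has the minor advantage of needing only one test function, at the cost of that extra bookkeeping. Note also a small redundancy in your write-up: once you know that Groups I and II vanish at strict-interior positions, the separate bound $\|\pi\pi1+\cdots+\pi\pi4\|_2\lesssim1$ via subtraction is unnecessary, since the strict-interior Haar coefficients of $\cT^b f$ already equal those of the $\pi\pi$-sum and are controlled directly by $\|\cT^b f\|_2$.
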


\begin{proof}
The results in Lemma \ref{L:TbonP} immediately show that all operators in Groups I and II are a priori bounded and can be disregarded. What remains are the four operators in Group III.

\textbf{Case 1: $I''$ and $J''$ are both odd.} In this case, we only have the operator $\pi\pi1$, which we rewrite as 
	$$\pi\pi1 = - \sum_{\substack{I \text{ odd}, I\subsetneq I'' \\ J \text{ odd}, J \subsetneq J''}} 
	(b, h_{\hat I}\otimes h_{\hat J}) h_{I''}(\hat I) h_{J''}(\hat J) s(I, \hat I) s(J, \hat J) h_I \otimes h_J.$$
So boundedness of $\cT^b$ in this case reduces to just this $\pi\pi1$. Computing its norm gives us
	$$\|\pi\pi1\|_2^2 = \frac{1}{|I''||J''|}\sum_{\substack{I \text{ odd}, I\subsetneq I'' \\ J \text{ odd}, J \subsetneq J''}} 
		|(b, h_{\hat I}\otimes h_{\hat J})|^2 
		= \frac{1}{|I''||J''|} \sum_{\substack{I \text{ even}, I\subsetneq I'' \\ J \text{ even}, J \subsetneq J''}} 
		|(b, h_{I}\otimes h_{J})|^2 
		\lesssim C,$$
and from here immediately
	\begin{equation}
	\label{E:BMO-rec1}
	\sum_{\substack{I \text{ even}, I\subseteq I_0 \\ J \text{ even}, J \subseteq J_0}} 
		|(b, h_I\otimes h_J)|^2 \lesssim |I_0||J_0|, \text{ for all } I_0, J_0 \in \cD.
	\end{equation}

\vspace{0.2in}
\textbf{Case 2: $I''$ is odd and $J''$ is even.} In this case we have two operators, $\pi\pi1$ and $\pi\pi2$. But now we know from \eqref{E:BMO-rec1} that $\pi\pi1$ is already bounded, so we only need to look at $\pi\pi2$:
	$$\pi\pi2 = - \sum_{\substack{I \text{ odd}, I\subsetneq I'' \\ J \subsetneq J''_+}} (b, h_{\hat I}\otimes h_J) 
		h_{I''}(\hat I) h_{J''_+}(J) s(I, \hat I) h_I \otimes h_J
+ \sum_{\substack{I \text{ odd}, I\subsetneq I'' \\ J \subsetneq J''_-}} (b, h_{\hat I}\otimes h_J) 
		h_{I''}(\hat I) h_{J''_-}(J) s(I, \hat I) h_I \otimes h_J.$$
		
Separate $\pi\pi2$ into $\pi\pi2_{\text{odd}}$, where we only sum over \textit{odd} $J \subsetneq J''_\pm$, and
$\pi\pi2_{\text{even}}$, where we only sum over \textit{even} $J \subsetneq J''_\pm$. These are orthogonal, so they are individually bounded. Now
	$$\|\pi\pi2_{\text{odd}}\|_2^2 = \frac{2}{|I''||J''|} 
	\sum_{\substack{I \text{ even}, I\subsetneq I'' \\ J \text{ odd}, J\subsetneq J''_\pm}} 
	|(b, h_I \otimes h_J)|^2 \lesssim C.$$
From here, it follows easily that
	\begin{equation}
	\label{E:BMO-rec2}
	\sum_{\substack{I \text{ even}, I\subseteq I_0 \\ J \text{ odd}, J \subseteq J_0}} 
		|(b, h_I\otimes h_J)|^2 \lesssim |I_0||J_0|, \text{ for all } I_0, J_0 \in \cD.
	\end{equation}

\vspace{0.2in}
\textbf{Case 3: $I''$ is even and $J''$ is odd.} This case is symmetrical to Case 2, and yields
	\begin{equation}
	\label{E:BMO-rec3}
	\sum_{\substack{I \text{ odd}, I\subseteq I_0 \\ J \text{ even}, J \subseteq J_0}} 
		|(b, h_I\otimes h_J)|^2 \lesssim |I_0||J_0|, \text{ for all } I_0, J_0 \in \cD.
	\end{equation}

\vspace{0.2in}
\textbf{Case 4: $I''$ and $J''$ are both even.} In this case we have all four operators in Group III, but we already know from \eqref{E:BMO-rec1}, \eqref{E:BMO-rec2} and \eqref{E:BMO-rec3} that all but $\pi\pi4$ are already bounded. Now
	$$\|\pi\pi4\|_2^2 = \frac{4}{|I''||J''|} \sum_{\substack{I \subsetneq I''_\pm \\
		J \subsetneq J''_\pm}} |(b, h_I \otimes h_J)|^2,$$
which gives us exactly that
	$$\sum_{\substack{I \subseteq I_0 \\ J \subseteq J_0}} |(b, h_I \otimes h_J)|^2 \lesssim |I_0||J_0|,
		\text{ for all } I_0, J_0 \in \cD,$$
so $b \in BMO_r$.
\end{proof}

%---------------------------------------------------------%
%---------------------Section What is left-------------%
%---------------------------------------------------------%

\section{Analyzing what is left to estimate if $BMO_r$ of the symbol is small}
\label{small}

In the previous section we proved
\begin{equation}
\label{r}
\|b\|_{BMO_r} \lesssim \|\cT^b\|\,.
\end{equation}
We can normalize $b$ to have 
$$
\|b\|_{BMO_{ChF}}=1\,.
$$
Hence, to get an estimate of $\cT^b$ from below we can assume
\begin{equation}
\label{r}
\|b\|_{BMO_r} \lesssim \eps_0\,,
\end{equation}
where $\eps_0$ is a very small positive absolute number.

\medskip

 We recall that everything is dyadic.

\begin{defin}
The Haar  support of  function $\al$ is the collection of dyadic rectangles such that Haar coefficient does not vanish:
$\sigma_H(\al):=\{R: (\al, h_R)\neq 0\}$.
\end{defin}

\begin{defin}
Let $R= I\times J$, $p(R):= \hat R=\hat I\times \hat J$, $p_m(R)=\hat p_{m-1}(R)$.
\end{defin}

\begin{defin}
Let $U_0$ be a finite union of dyadic rectangles. We call such $U_0$ {\it a dyadic open set}. For every such $U_0$ we denote by $\cU_0$ the collection of all dyadic rectangles lying inside $U_0$.
\end{defin}

\begin{defin}
We say that function $\al$ is deep inside a dyadic open set $U$ if $p_m (\sigma_H(\al)) \subset \cU$. We can call such $\al$ $m$-deep inside $U$.
\end{defin}

\begin{defin}
$\cT^b_* = [T_2^*,[T_1^*, b]]$.
\end{defin}

\subsection{Plan}
We will try to estimate $\cT^b$ from below under assumptions:
\begin{enumerate}
\item $\sigma_H(b)$ is finite;
\item $\|b\|_{BMO_{ChF}} =1$;
\item $ \|b\|_{BMO_r} \le \eps_0$;
\item $U_0$ is a dyadic open set such that $\|b\|_{BMO_{ChF}} = \frac1{|U_0|} \sum_{R\in \cU_0} (b, h_R)^2$.
\end{enumerate}
By rescaling we can assume that $|U_0|=1$.

We will stay soon only with $\pi\pi$ part of repeated commutator.

\begin{lemma}( Symbol is ``smaller'' than test function)
\label{smsy}
 $\cT^{h_{I'}\otimes h_{J'}}_* (h_{I''}\otimes h_{J''}) \neq 0 \, \Rightarrow I' \le \hat{I''}, J' \le \hat{J''}$.
 \end{lemma}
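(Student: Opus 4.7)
The plan is to exploit the tensor-product structure of the symbol $h_{I'}\otimes h_{J'}$. Writing $M_\varphi$ for multiplication by $\varphi$, and using that $T_1^*$ acts only in the $x$-variable while $T_2^*$ acts only in the $y$-variable, one checks directly that
\[
\cT^{h_{I'}\otimes h_{J'}}_* \;=\; [T_2^*,[T_1^*,\, M_{h_{I'}}\otimes M_{h_{J'}}]] \;=\; [T_1^*, M_{h_{I'}}] \,\otimes\, [T_2^*, M_{h_{J'}}].
\]
Applied to the tensor $h_{I''}\otimes h_{J''}$, this factors as $\bigl([T_1^*, h_{I'}]h_{I''}\bigr)\otimes \bigl([T_2^*, h_{J'}]h_{J''}\bigr)$. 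The lemma therefore reduces to the one-parameter claim: $[T^*, h_{I'}]h_{I''}\neq 0$ forces $I'\subseteq \hat{I''}$.

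To prove the one-parameter version I would argue by contrapositive. Recall that $T^*h_{I''}$ is either $0$ (when $I''$ is even) or a signed copy of $h_{\hat{I''}}$ (when $I''$ is odd); in either case its support lies in $\hat{I''}$. Suppose first that $I'\cap \hat{I''}=\emptyset$. Then $h_{I'}$ vanishes on $\hat{I''}\supseteq I''$ and on the support of $T^*h_{I''}$, so both $T^*(h_{I'}h_{I''})$ and $h_{I'}T^*h_{I''}$ vanish, and the commutator is zero.

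Suppose instead that $I'\supsetneq \hat{I''}$. Then $h_{I'}$ is constant on $\hat{I''}$; call the value $c:= h_{I'}(\hat{I''})$. Because $I''\subset\hat{I''}$ we have $h_{I'}h_{I''} = c\,h_{I''}$, and because $\supp T^*h_{I''}\subseteq\hat{I''}$ we likewise have $h_{I'}\,T^*h_{I''} = c\,T^*h_{I''}$. Hence $T^*(h_{I'}h_{I''}) - h_{I'}T^*h_{I''} = cT^*h_{I''} - cT^*h_{I''}=0$. In the dyadic lattice the condition $I'\not\subseteq \hat{I''}$ is exhausted by the two cases just treated, so the contrapositive is complete.

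The only real subtlety, and hence the main point to get right, is that strict containment $I'\supsetneq\hat{I''}$ (as opposed to equality $I'=\hat{I''}$) is precisely what makes $h_{I'}$ constant on $\hat{I''}$; at $I'=\hat{I''}$ the function $h_{I'}$ is of course no longer constant there, but then already $I'\subseteq\hat{I''}$, so there is nothing to prove. Once this distinction is observed, the lemma is a direct consequence of the tensor factorization of $\cT^{h_{I'}\otimes h_{J'}}_*$ together with the local ``multiplication-by-a-constant'' principle.
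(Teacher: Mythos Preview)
Your proof is correct and is precisely the kind of ``direct calculation'' the paper gestures at. The tensor factorization $\cT^{h_{I'}\otimes h_{J'}}_*=[T_1^*,M_{h_{I'}}]\otimes[T_2^*,M_{h_{J'}}]$ is the right way to organize the computation, and your one-parameter contrapositive (disjoint case and strict-containment case, using that $T^*h_{I''}$ is supported in $\hat{I''}$) is clean and complete.
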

 \begin{proof}
 Direct calculation.
 \end{proof}
 
 \begin{lemma}( Symbol is ``smaller'' than test function)
\label{smsy1}
 $\cT^{h_{I'}\otimes h_{J'}} (h_{I''}\otimes h_{J''}) \neq 0 \, \Rightarrow I' \le {I''}, J' \le {J''}$.
 \end{lemma}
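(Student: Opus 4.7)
The main idea is that, because both the symbol $b=h_{I'}\otimes h_{J'}$ and the test function $f=h_{I''}\otimes h_{J''}$ are simple tensors and $T_1,T_2$ act on independent variables, the repeated commutator factorizes across the two variables. Writing out all four terms of $[T_2,[T_1,b]]f$ and using $T_1^{(x)}(u(x)v(y))=T_1u(x)\,v(y)$ (and similarly for $T_2^{(y)}$), I would first verify the clean identity
$$
\cT^{b}f \;=\; \bigl(C_{I'} h_{I''}\bigr)(x)\;\otimes\;\bigl(C_{J'} h_{J''}\bigr)(y),
$$
where $C_K:=[T_1,M_{h_K}]$ denotes the one-parameter commutator with multiplication by $h_K$ (and the analogous operator in $y$ is used for $C_{J'}$). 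This reduces the bi-parameter claim to the one-parameter claim $C_{I'}h_{I''}\ne 0\Rightarrow I'\subseteq I''$, to be checked in each variable separately.

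For the one-variable step, I would rule out the two bad configurations. If $I'\cap I''=\emptyset$, then $h_{I'}h_{I''}=0$, and since $T_1h_{I''}$ is supported in $I''$ (being either $0$ or $h_{I''_+}-h_{I''_-}$), we also have $h_{I'}(T_1h_{I''})=0$, so $C_{I'}h_{I''}=0$. If $I'\supsetneq I''$, then $h_{I'}$ is constant on $I''$ with value $c:=h_{I'}(I'')$. Since $T_1h_{I''}$ is supported inside $I''$, multiplication by $h_{I'}$ acts there as multiplication by $c$, giving $h_{I'}(T_1h_{I''})=c\,T_1h_{I''}$. On the other hand $h_{I'}h_{I''}=c\,h_{I''}$, hence $T_1(h_{I'}h_{I''})=c\,T_1h_{I''}$ as well, and the two terms of the commutator cancel. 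Thus $C_{I'}h_{I''}\ne 0$ forces $I'\subseteq I''$, and symmetrically $J'\subseteq J''$, proving the lemma.

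I do not expect any real obstacle here. The analogous Lemma \ref{smsy} for the adjoint is dispatched by the author with ``direct calculation,'' and the same tensor-product-plus-cases argument applies; the only point that deserves a moment's attention is the nested case $I'\supsetneq I''$, where one must notice that $T_1$ does not enlarge the support of $h_{I''}$ beyond $I''$, which is precisely what makes the cancellation between the two pieces of $[T_1,M_{h_{I'}}]$ work.
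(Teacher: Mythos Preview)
Your proposal is correct. The paper's own proof is literally the two words ``Direct calculation,'' and your tensor factorization $\cT^{h_{I'}\otimes h_{J'}}(h_{I''}\otimes h_{J''})=[T_1,M_{h_{I'}}]h_{I''}\otimes [T_2,M_{h_{J'}}]h_{J''}$ together with the one-variable case analysis (disjoint supports, respectively $h_{I'}$ constant on the support of $h_{I''}$ and of $T_1h_{I''}$) is exactly the clean way to carry out that calculation; there is nothing further to compare.
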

 \begin{proof}
 Direct calculation.
 \end{proof}

 \begin{defin} 
 \label{UU0}
 Given $U_0$, we use a standard notation $U$ for the set $=\{M_s^d \one_{U_0} \ge \frac1{16}\}$, where $M_s^d$ denotes the strong dyadic maximal function.
 \end{defin}
 
 \begin{lemma}
 \label{1deep}
Let $U_0$ be a finite union of dyadic rectangles. Let $U$ be as in the Definition \ref{UU0}. Let $\rho$ be such that $\sigma_H(\rho) \subset \cU_0$. Then $\cT^\beta_*(\rho)  =0$  for any $\beta$ such that $\sigma_H(\beta)\cap \cU =\emptyset$.
\end{lemma}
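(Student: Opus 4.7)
The plan is to use the bilinearity of $(b,\rho)\mapsto\cT^b_*(\rho)$ in $b$ and reduce the conclusion to the vanishing of every elementary cross-term in the Haar expansion. Writing $\rho=\sum_{R\in\sigma_H(\rho)}\rho_R h_R$ and $\beta=\sum_{R'\in\sigma_H(\beta)}\beta_{R'}h_{R'}$, we have
$$
\cT^\beta_*(\rho)\;=\;\sum_{R\in\sigma_H(\rho)}\sum_{R'\in\sigma_H(\beta)}\rho_R\,\beta_{R'}\,\cT^{h_{R'}}_*(h_R),
$$
so it suffices to prove $\cT^{h_{R'}}_*(h_R)=0$ for every admissible pair, namely every $R\in\cU_0$ and every $R'\notin\cU$.

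First I would apply Lemma \ref{smsy} to such a pair. Writing $R=I\times J$ and $R'=I'\times J'$, the lemma says that $\cT^{h_{R'}}_*(h_R)\neq 0$ forces $I'\subseteq\hat I$ and $J'\subseteq\hat J$; equivalently, $R'\subseteq p(R)=\hat R$.

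Next I would invoke the standard strong-maximal-function observation that the one-step parent of any rectangle in $\cU_0$ already lies in $U$. Indeed, $R\subseteq U_0$ together with $|\hat R|=4|R|$ gives $|\hat R\cap U_0|\geq |\hat R|/4\geq |\hat R|/16$, so $M_s^d\one_{U_0}\geq 1/16$ on every point of $\hat R$, whence $\hat R\subseteq U$ by Definition \ref{UU0}. Consequently any dyadic $R'\subseteq\hat R$ is a dyadic rectangle inside $U$, i.e.\ $R'\in\cU$.

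Putting the two steps together: a nonzero summand would require simultaneously $R'\in\cU$ (from steps one and two) and $R'\notin\cU$ (from the hypothesis $\sigma_H(\beta)\cap\cU=\emptyset$), a contradiction. Hence every term in the expansion vanishes and $\cT^\beta_*(\rho)=0$. The only place where care is needed is in reading the relation in Lemma \ref{smsy} as the containment $R'\subseteq\hat R$; once this is in hand, the conclusion is immediate from the fact that passing to a single dyadic parent cannot escape the enlargement $U=\{M_s^d\one_{U_0}\geq 1/16\}$ of $U_0$.
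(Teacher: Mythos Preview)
Your proof is correct and follows essentially the same approach as the paper, which simply says the lemma ``follows from Lemma~\ref{smsy}.'' You have spelled out the two implicit ingredients: the reduction by bilinearity to elementary Haar pairs, and the observation that $R\subset U_0$ forces $\hat R\subset U$ via the $1/16$ threshold in Definition~\ref{UU0}. Your reading of the relation in Lemma~\ref{smsy} as containment $R'\subseteq \hat R$ is the intended one.
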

\begin{proof}
 Follows from Lemma \ref{smsy}.
 \end{proof}
 
  \begin{lemma}
 \label{1deepstar}
Let $U_0$ be a finite union of dyadic rectangles. Let $U$ be as in the Definition \ref{UU0}. Let $\rho$ be such that $\sigma_H(\rho) \subset \cU$. Then $\cT^\beta(\rho)  =0$  for any $\beta$ such that $\sigma_H(\beta)\cap \cU =\emptyset$.
\end{lemma}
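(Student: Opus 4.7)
The plan is to mirror the proof of Lemma \ref{1deep} verbatim, only replacing the input Lemma \ref{smsy} by its non-starred counterpart Lemma \ref{smsy1}. The structural difference between Lemmas \ref{1deep} and \ref{1deepstar} is exactly that the Haar-support conditions on symbol and test function have been swapped, and this swap is compensated precisely by the switch from $\cT^\beta_*$ to $\cT^\beta$: Lemma \ref{smsy} gave the containment up to one parent-shift (appropriate for the starred operator), while Lemma \ref{smsy1} gives the plain containment (appropriate for the non-starred one), so no swap of a parent-layer needs to be tracked.

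First I would expand both inputs in the Haar basis,
$$\beta = \sum_{R' \in \sigma_H(\beta)} \beta_{R'} h_{R'}, \qquad \rho = \sum_{R'' \in \sigma_H(\rho)} \rho_{R''} h_{R''},$$
where by hypothesis the sum for $\beta$ runs over $R' \notin \cU$ and the sum for $\rho$ runs over $R'' \in \cU$. Since the map $(\beta, \rho) \mapsto \cT^{\beta}(\rho)$ is bilinear,
$$\cT^{\beta}(\rho) = \sum_{R' \notin \cU} \sum_{R'' \in \cU} \beta_{R'} \rho_{R''}\, \cT^{h_{R'}}(h_{R''}),$$
so it suffices to verify that every individual term $\cT^{h_{R'}}(h_{R''})$ vanishes under these index restrictions.

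Writing $R' = I' \times J'$ and $R'' = I'' \times J''$, Lemma \ref{smsy1} tells us that $\cT^{h_{R'}}(h_{R''}) \neq 0$ forces $I' \subseteq I''$ and $J' \subseteq J''$, i.e.\ $R' \subseteq R''$. Combined with $R'' \in \cU$, which by Definition means $R'' \subset U$, this would give $R' \subseteq R'' \subset U$, and hence $R' \in \cU$, contradicting $R' \notin \cU$. Therefore every term in the double sum is zero, and $\cT^{\beta}(\rho) = 0$.

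The only potential obstacle is a notational one: we must read the symbol ``$\le$'' in Lemmas \ref{smsy} and \ref{smsy1} as dyadic containment $\subseteq$. This is the natural and only reading that makes the Haar-support analysis work, and it is exactly what was used implicitly in the deduction of Lemma \ref{1deep} from Lemma \ref{smsy}; once this reading is adopted, the argument above is pure bookkeeping and no genuine difficulty arises.
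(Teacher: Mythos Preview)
Your proof is correct and follows exactly the paper's approach: the paper's own proof is the single line ``Follows from Lemma \ref{smsy1},'' and you have simply unpacked that line via the Haar expansion and bilinearity. Your reading of ``$\le$'' in Lemma \ref{smsy1} as dyadic containment is the intended one, and your observation that no parent-layer needs to be tracked here (in contrast with Lemma \ref{1deep}) is precisely the point.
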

\begin{proof}
 Follows from Lemma \ref{smsy1}.
 \end{proof}
 
 \bigskip
 
 \begin{defin}
$Edge_m(U_0) =\{R: R\in \cU_0, p_{m-1}(R) \in \cU_0, p_m(R) \cap U_0^c \neq \emptyset\}$. $Edge(U_0) := Edge_1(U_0)$. The same for $U$ from Definition \ref{UU0}. $Edge (U) =\{R: R\subset U, \hat R \cap U^c \neq \emptyset\}$.
\end{defin}

So $Edge_m(U_0)$ describes rectangles in $U_0$ that  are ``deep, but not so deep'' in $U_0$, they are on the edge.

\bigskip

Given $b$ and $U_0$, we consider $U$ built as in Definition \ref{UU0} and we consider the decomposition
\begin{equation}
\label{deco}
b=\al_0 + \beta,
\end{equation}
where 
\begin{enumerate}
\item $\al_0:= \sum_{R\,\subset  U} (b, h_R) h_R$;
%\item $\beta:= \sum_{R\in Edge(U)} (b, h_R) h_R$;
\item $\beta$ is the rest, meaning that rectangles giving non-zero $\beta$ coefficient have to be like this: $R\cap U^c\neq\emptyset$.
\end{enumerate}

By Lemma \ref{1deep} that says that symbol should be ``smaller" than the test function,  we get that if  $\rho$ is such that $\sigma_H(\rho) \subset \cU_0$, then
\begin{equation}
\label{gamma-est}
\cT^\beta_*(\rho)=0\,.
\end{equation}

\bigskip

So--in estimating $\cT^b_*(\rho)$ from below we can forget about $\|\cT^\beta_*(\rho)\|_2$ and look only at $\cT^{\al_0}_*(\rho)$.  And the only thing we used for claiming that is that Haar spectrum of $\rho$ is in $\cU_0$ and Definition \ref{UU0} of $U$.
%So the same reasoning will apply to estimates of any $\cT^b_*(\rho)$ from below, if the Haar spectrum of $\rho$ is in $\cU$

By Definition \ref{UU0} and weak boundedness of $M_s^d$ we know that
\begin{equation}
\label{weakM}
|U|\le C_0 |U_0|\,.
\end{equation}

Obviously, if we normalize $|U_0|=1$, $\|\rho\|_2=1$, we have
$$
C_0^{1/2}\|\cT^b\| \ge \|\cT^b\|  |U|^{1/2}  \ge  \|\cT^b\| \|\rho\| |U|^{1/2} \ge  |( \one_U, \cT^b_*(\rho))|= |( \one_U, \cT^{\al_0}_*(\rho))|\,.
$$

\bigskip

So we are left to estimate $ |( \one_U, \cT^{\al_0}_*(\rho))|$ from below, where $\sigma_H(\rho)\subset \cU_0$.

\section{The estimate  of $ |( \cT^{ \al_0}(\one_U), \rho|$ from below,  where $\sigma_H(\rho)\subset \cU_0$}

To estimate $ |( \one_U, \cT^{\al_0}_*(\rho))|$ from below let us look at our formulas for $\pi\pi, \pi Z, \dots, DD$ for
$$
b= \al_0,\,\, f =\one_U\,.
$$
\begin{remark}
\label{m}
If needed we can always replace $\al_0$ that was such that $\sigma_H(\al_0)\subset \cU$, by $\al_{m-1}$, that is
$\sigma_H(\al_{m-1})\subset \cU$. This is only if we need this. The result one obtains after such change is weaker, but it still would give the information on some part of Chang--Fefferman $BMO$ norm of $b$.
%That does not change anything if $m$ is fixed (even large), this is taken care of by the choice of $\eps_0$.
\end{remark}

\subsection{Signature and parts of commutator that vanish}
We use the notion of signature. We look at the functions employed in the definition of each part. There are $3$ functions in $I$ position and $3$ functions in $J$ position. We write signature as a matrix, where the first row is assigned to $3$ functions in $I$ position and the second row is assigned to $3$ functions in $J$ position. The element of the matrix is $0$ if the integral of corresponding function is $0$ (e.g. $h_I$), and  the element of the matrix is $1$ if the integral of corresponding function is not $0$ (e.g. $\one_I$). For example,
$$
\begin{bmatrix} 0,0,0
\\
0,0, 0\end{bmatrix}
$$
is the signature of $DD$ part of repeated commutator, and
$$
\begin{bmatrix} 0,0,0
\\
0,1, 0\end{bmatrix}\,.
$$
is the signature of $D\pi$ part. $DZ$ part has signature
$$
\begin{bmatrix} 0,0,0
\\
0,0, 1\end{bmatrix}\,.
$$

\begin{lemma}
\label{11}
All parts having only one $1$ part in the signature can be estimates by $C\|\al_0\|_{BMO_r}\le C\eps_0$.
\end{lemma}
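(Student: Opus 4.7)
The parts having exactly one $1$ in their signature are $D\pi$, $\pi D$, $DZ$, and $ZD$. By the obvious symmetry between the $x$- and $y$-variables, it is enough to treat $D\pi$ and $DZ$; the other two are handled by the same argument with the roles of the coordinates swapped.

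The key observation is that, after performing the $J$-sum in \eqref{Dpi}, the $D\pi$ part reassembles into the compact tensor form
$$(D\pi\, f)(x,y) \;=\; \sum_{I \text{ odd}} \frac{h_I(x)}{\sqrt{|I|}}\; [T_2,\pi_{b_{\hat I}}](f_{\hat I})(y),$$
where $b_{\hat I}(y) := (b,h_{\hat I})_x$ is the first-variable Haar coefficient of $b$ at $\hat I$ and $f_{\hat I}$ is defined analogously. In this form the operator is transparent: in $x$ it is a Haar multiplier, while in $y$ it is the one-parameter commutator of the dyadic shift $T_2$ with the one-dimensional dyadic paraproduct $\pi_{b_{\hat I}}$.

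The plan from here is three short steps. First, by orthogonality of the Haar functions in $x$,
$$\|D\pi f\|_2^2 \;=\; \sum_{I \text{ odd}} \frac{1}{|I|}\; \bigl\|[T_2,\pi_{b_{\hat I}}](f_{\hat I})\bigr\|_{L^2_y}^2.$$
Second, I would invoke the standard one-parameter bound $\|[T_2,\pi_{\phi}]\|_{L^2\to L^2}\lesssim \|\phi\|_{BMO(\bR)}$ for the dyadic paraproduct and the shift $T_2$. Third, the rectangular BMO hypothesis on $b$ gives
$$\|b_{\hat I}\|_{BMO(\bR)}^2 \;=\; \sup_{J}\frac{1}{|J|}\sum_{J'\subseteq J}|(b,h_{\hat I}\otimes h_{J'})|^2 \;\le\; |\hat I|\,\|b\|_{BMO_r}^2.$$
Substituting, the ratio $|\hat I|/|I|=2$ is harmless, and Parseval in $x$ absorbs $\sum_{I\,\text{odd}} \|f_{\hat I}\|_{L^2_y}^2 \le 2\|f\|_2^2$, producing $\|D\pi f\|_2 \lesssim \|b\|_{BMO_r}\|f\|_2 = \|\al_0\|_{BMO_r}\|f\|_2$ when $b=\al_0$.

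The argument for $DZ$ is identical after replacing $\pi_{b_{\hat I}}$ by the dyadic ``$Z$''-paraproduct $Z_{b_{\hat I}}$ and invoking the analogous one-parameter bound $\|[T_2, Z_\phi]\|\lesssim \|\phi\|_{BMO}$. The only genuinely substantive point is the tensor rewriting above: once one sees that a single $1$ in the signature is precisely what allows the operator to factor as a Haar multiplier in one variable tensored with a one-parameter commutator in the other, the rest follows from standard one-parameter commutator theory combined with the definition of rectangular BMO. I expect no real obstacles in this lemma; the difficulty in the paper is postponed to the two-$1$ and three-$1$ signatures (the $\pi\pi$, $\pi Z$, $Z\pi$, $ZZ$ parts), where both variables simultaneously require averaging and the tensor-factorization trick above is unavailable.
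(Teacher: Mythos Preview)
Your proof is correct and is precisely the ``direct calculation'' the paper alludes to without giving details. The tensor factorization
\[
D\pi\,f \;=\; \sum_{I\text{ odd}} \frac{h_I}{\sqrt{|I|}}\,[T_2,\pi_{b_{\hat I}}](f_{\hat I})
\]
is exactly how the $D\pi$ part is introduced in the paper (the line just before \eqref{Dpi}), and from there your three steps---orthogonality in $x$, the one-parameter commutator bound $\|[T_2,\pi_\phi]\|\lesssim\|\phi\|_{BMO}$, and the rectangular $BMO$ control $\|b_{\hat I}\|_{BMO}^2\le |\hat I|\,\|b\|_{BMO_r}^2$---are the natural computation. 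Your remark that the single $1$ in the signature is precisely what allows the tensor splitting is the correct conceptual explanation of why these parts, and only these, are handled so easily.
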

\begin{proof}
Direct calculation.
\end{proof}

Thus those parts can be disregarded. These are all $D...$, $...D$ parts.

Consider   now parts with $Z...$, $...Z$.  These are terms with signatures
$$
\begin{bmatrix} 0,1,0
\\
0,0, 1\end{bmatrix}\,,
$$
and
$$
\begin{bmatrix} 0,0,1
\\
0,1, 0\end{bmatrix}\,.
$$
Also $ZZ$ has signature
$$
\begin{bmatrix} 0,0,1
\\
0,0, 1\end{bmatrix}\,.
$$

They will necessarily consist of the  terms having the  following factors:
\begin{enumerate}
\item $ (\one_U ,  h_I\otimes h_J)$;
\item $ (\one_U ,  h_I\otimes \tilde \one_J)$;
\item $ (\one_U , T_1^* h_I\otimes \tilde \one_J)$;
\item $ (\one_U ,  \tilde \one_I\otimes T_1^* h_J)$;
\item $ (\one_U ,  h_I\otimes T_2^*\tilde \one_J)$;
\item $ (\one_U , T_1^* h_I\otimes T_2^*h_J)$;
\item et cetera... .
\end{enumerate}
It is important to remember that here we can count only those $I, J$ such that $ I \times  J$ are inside $U_0$.  This follows from our choice of $\rho$: $\sigma_H(\rho)\subset \cU_0$.
%One can check this  claim by using the $1$-deepness of $\al_0$ that implies that only such Haar coefficients will give non-zero presence. 
But as $ I \times  J$ are inside $U_0$ almost all those scalar products above are equal to zero. Unfortunately not all, number (5) does not vanish as $T_2^*\tilde \one_J$ is a very non-localized function. But let us even assume for a while that all such terms disappear (which is false as we  just explained, some terms stay).

\bigskip

Let $\rho$ be an arbitrary test function with Haar spectrum inside $\cU_0$ (we can actually use functions with Haar spectrum  $m$-deep in $U_0$ if needed).
%Let $\cP^m$ be the orthogonal projection that kills Haar functions of  all rectangles $I\times J$ that are {\it not} $m$-deep inside $U$, and leaves alone those Haar functions whose rectangles are $m$-deep inside $U$.
Build $U$ as in Definition \ref{UU0}. We have \eqref{weakM}. We denoted by $\al_0$ the part of $b$ having Haar spectrum in $\cU$.

After the bookkeeping we see that in $|(\one_U, \cT^{\al_0}_* \rho)|$ only $\pi\pi$ part rests because all other parts either vanish or are smaller than $C\|b\|_{BMO_{r}} \le C\eps_0$.

Let  us write down all four terms of $\pi\pi$ part of $(\one_U, \cT^{\al_0}_*\rho)$, $\sigma_H(\rho)\subset U_0$, 
\begin{align}
\label{ppagain}
\pi\pi(\al_0, \one_U; \bfT)= &\sum (\al_0, h_I\otimes h_J) (\one_U , \tilde\one_I\otimes \tilde\one_J) T_1 h_I\otimes T_2 h_J -
\\
&\sum (\al_0, h_I\otimes h_J) (\one_U , T_1^*\tilde\one_I\otimes \tilde\one_J)  h_I\otimes T_2 h_J -\notag
\\
&\sum (\al_0, h_I\otimes h_J) (\one_U , \tilde\one_I\otimes T_2^*\tilde\one_J) T_1 h_I\otimes  h_J +\notag
\\
&\sum (\al_0, h_I\otimes h_J) (\one_U , T_1^*\tilde\one_I\otimes T_2^*\tilde\one_J)  h_I\otimes h_J= I -II-III+IV\,.\notag
\end{align}

The range of $I, J$ can be naturally associated with the definition of $T_1, T_2$, and with the nature of $\rho$: $I\times J\subset U_0$.

\bigskip

\subsection{Conjugate operator}
It seems to be obvious that we can replace both $T_i$  by $T_i^*$, and get the similar expression.
%--we will replace $\al_0$ that is $1$ deep, by $\al_1$ that is $2$-deep to account  for the difference between $T_i$ (that shift Haar functions to smaller-support Haar functions) and $T_i^*$ (that shift Haar functions to bigger-support Haar functions). Then we would like to estimate from below the following sum
Now we want to estimate $ |( \one_U, \cT^{\al_0}_*(\rho))|$ from below. Let $\rho$ be such that $\sigma_H(\rho)\subset \cU$. We denoted by $\al_0$ the part of $b$ having Haar spectrum in $\cU$. By Lemma \ref{smsy1}
$$
( \one_U, \cT^{\al_0}(\rho)) =( \one_U, \cT^{b}(\rho))\,.
$$

In estimating $( \one_U, \cT^{\al_0}(\rho))$ we  the  terms having the  following factors:
\begin{enumerate}
\item $ (\one_U , h_I\otimes h_J)$;
\item $ (\one_U , h_I\otimes \tilde \one_J)$;
\item $ (\one_U , T_1 h_I\otimes \tilde \one_J)$;
\item $ (\one_U ,  \tilde \one_I\otimes T_1 h_J)$;
\item $ (\one_U ,  h_I\otimes T_2\tilde \one_J)$;
\item $ (\one_U , T_1 h_I\otimes T_2h_J)$;
\item et cetera... .
\end{enumerate}
By the choice of $\al_0$ just mentioned above, $I\times J\subset U$ here. Therefore all these scalar products seem to vanish. But the attentive reader can notice that, yes, e.g. $(\one_U , T_1 h_I\otimes \tilde \one_J)$ does vanish, but $ (\one_U ,  h_I\otimes T_2\tilde \one_J)$ does not. This will create a big difficulty below.

\bigskip

However, let us assume for a while that
 in estimating $( \one_U, \cT^{\al_0}(\rho))$ from below we will be left with sum of terms estimated by $C\|b\|_{BMO_{r}}$ and with the following $\pi\pi$ terms that we cannot cancel out:
\begin{align}
\label{ppagainstar}
\pi\pi(\al_0, \one_U; \bfT^*)= &\sum (\al_0, h_I\otimes h_J) (\one_U , \tilde\one_I\otimes \tilde\one_J) T_1^* h_I\otimes T_2^* h_J -
\\
&\sum (\al_0, h_I\otimes h_J) (\one_U , T_1\tilde\one_I\otimes \tilde\one_J)  h_I\otimes T_2^* h_J -\notag
\\
&\sum (\al_0, h_I\otimes h_J) (\one_U , \tilde\one_I\otimes T_2\tilde\one_J) T_1^* h_I\otimes  h_J +\notag
\\
&\sum (\al_0, h_I\otimes h_J) (\one_U , T_1\tilde\one_I\otimes T_2\tilde\one_J)  h_I\otimes h_J= A -B-C+D\,.\notag
\end{align}
We do not write the scalar product with $\rho, \sigma_H(\rho) \subset \cU$, but we keep it in mind.
The range of $I, J$ can be naturally associated with the definition of $T_1, T_2$, and with the nature of $\al_0$: definitely $I\times J\subset U$, but for example the term with $ T_1^* h_I\otimes T_2^* h_J$ disappears if $I\times J\subset U$, but $ \hat I\times \hat J $ is not inside $U$. This term will be ``cut-out''  by the scalar product with $\rho, \sigma_H(\rho) \subset \cU$.

But notice that all terms such that $I\times J\subset U_0$ stay, they are not cut-out. This is because  by  Definition \ref{UU0} we have
$ \hat I\times \hat J \subset U$ as soon as $I\times J\subset U_0$.

\bigskip

We like the first line of \eqref{ppagainstar} (and we also like the first line of \eqref{ppagain}). If the first line were alone in \eqref{ppagainstar} then:
$$
\|\pi\pi(\al_0, \one_U; \bfT^*)\|_2^2=\|A\|_2^2 \ge \sum (\al_0, h_R)^2 = \sum_{\stackon{R\, \subset \,U_0}{
R \,odd\, odd}} (b, h_R)^2\,.
$$
And we would get
\begin{equation}
\label{est-ee}
\|\cT^b\| =\|\cT^b_*\| \ge c\,\sum_{\stackon{R \,\subset \,U_0}{
R \, odd\, odd}} (b, h_R)^2\,,
\end{equation}
which is ``almost'' what we want (in fact that would be $25$ percent of what we want). Of course we want more, namely,  the summation not restricted to  just even-even $R$. So we have several problems:
\begin{enumerate}
\item how to get the summation over all $R$'s inside $U_0$?
\item what to do with terms $II$, $III$, $IV$?
\end{enumerate}

%These two problems probably indicate that we need more test functions: not only $\one_U$, but more functions of this type.
%\begin{remark}
%\label{orth}
%Notice that $I \perp II$, $I \perp III$, $II\perp III$. This shows that we could have gotten \eqref{est-ee} if we would have only $I$, $II$, $III$. 
Of course we need summation over all $R$ not just (odd, odd), but still, \eqref{est-ee} would be a step in the direction of proving the ultimate estimate:
\begin{equation}
\label{est-ult}
\|\cT^b\| \ge c\,\sum_{R \,\subset \,U_0} (b, h_R)^2\,.
\end{equation}
%But, unfortunately, we do have term $IV$, and it entangles with $I$, and hinders us from proving even \eqref{est-ee}.
%\end{remark}
\bigskip

\subsection{Coming back to $(\one_U, \cT^{\al_0}_*(\rho))$ and trying other test functions}
\begin{remark}
\label{TU}
We can try to use $T_1 \one_U, T_2 \one_U, T_1T_2 \one_U$ as test functions instead of $\one_U$ in \eqref{ppagain}. Again it is easy to see that if we choose  instead of $\al_0$ a deeper function $\al_1= \sum_{R\,is\, 2-deep\, in\, U_0} (b, h_R) h_R$, then after bookkeeping we see that in $|(T_i\one_U, \cT^{\al_1}(\al_1))|$ only the $\pi\pi$ part rests because all other parts either vanish or are smaller than $C\|b\|_{BMO_{r}}\le C\eps_0$.
\end{remark}
Let us look at (we use that $(T_1^*)^2=0$)
\begin{align}
\label{ppT1}
\pi\pi(\al_1, T_1\one_U)= &\sum (\al_1, h_I\otimes h_J) (\one_U , T_1^*\tilde\one_I\otimes \tilde\one_J) T_1 h_I\otimes T_2 h_J +
\\
&\sum (\al_1, h_I\otimes h_J) (\one_U ,T_1^* \tilde\one_I\otimes T_2^*\tilde\one_J) T_1 h_I\otimes  h_J \,.
\notag
\end{align}
We immediately conclude that the second sum restricted to $I$ and $J$ even is orthogonal to what is rest, so
\begin{equation}
\label{ee}
\sum_{I, J \, even, even} (\al_1, h_I\otimes h_J)^2 (\one_U ,T_1^* \tilde\one_I\otimes T_2^*\tilde\one_J)^2 \le C\|\cT^b\|^2\,.
\end{equation}
Let us look at (we use that $(T_2^*)^2=0$)
\begin{align}
\label{ppT1}
\pi\pi(\al_1, T_2\one_U)= &\sum (\al_1, h_I\otimes h_J) (\one_U , \tilde\one_I\otimes T_2^*\tilde\one_J) T_1 h_I\otimes T_2 h_J +
\\
&\sum (\al_1, h_I\otimes h_J) (\one_U ,T_1^* \tilde\one_I\otimes T_2^*\tilde\one_J)  h_I\otimes  T_2h_J \,.
\notag
\end{align}
We again conclude that the second sum restricted to $I$ and $J$ even is orthogonal to what is rest, so the same inequality follows:
\begin{equation}
\label{eee}
\sum_{I, J \, even, even} (\al_1, h_I\otimes h_J)^2 (\one_U ,T_1^* \tilde\one_I\otimes T_2^*\tilde\one_J)^2 \le C\|\cT^b\|^2\,.
\end{equation}

Exactly the same inequality follows if we  consider $\pi\pi(\al_1, T_1T_2\one_U)$.

But what would be needed is the ``opposite" sum estimate:
\begin{equation}
\label{oo}
\sum_{I, J \, odd, odd} (\al_1, h_I\otimes h_J)^2 (\one_U ,T_1^* \tilde\one_I\otimes T_2^*\tilde\one_J)^2 \le C\|\cT^b\|^2\,,
\end{equation}
because exactly this sum entangles with the first line of  \eqref{ppagain}: $\sum (\al_0, h_I\otimes h_J) (\one_U , \tilde\one_I\otimes \tilde\one_J) T_1 h_I\otimes T_2 h_J $.

\bigskip

{\bf  Problem.} So far we did not guess how to get \eqref{est-ee} from  estimating  formula \eqref{ppagainstar}, not to mention that \eqref{est-ult} rests even less clear. But we will proceed in the next two sections.

\section{One parameter case and \eqref{ppagainstar}}
\label{back1p}

Let us take a look at the one parameter case. We obviously can assume that we have localization, and that eventually we just need to find out how to estimate the analog of, say, \eqref{ppagainstar}  from below.

Here is this analog. It has two rather than four sums because we are in one parameter situation. The ``open'' set $U$ in one parameter case is just a dyadic interval $I_0$. We can assume that
$$
|I_0|=1\,.
$$
\begin{align}
\label{pipi1p}
\pi(\al_0, \one_{I_0}; T^*)=&\sum_{I\in \cD(I_0)} (b, h_I) \La T^* \one_{I_0}\Ra_I h_I -\sum_{I\in \cD(I_0)} (b, h_I) \La \one_{I_0}\Ra_I T^* h_I=
\\
&\sum_{I\in \cD(I_0)} (b, h_I) \La T^* \one_{I_0}\Ra_I h_I -\sum_{I\in \cD(I_0)} (b, h_I)  T^* h_I=\notag
\\
&A-B\,.\notag
\end{align}

Before plunging into the consideration of \eqref{pipi1p}, let us notice that the choice of $\one_U$ in two parameter case was initiated by the wish to stay only with $\pi\pi$ part and to bring many other parts of the commutator to zero.

Such a problem is not present in one parameter case. In this case we can notice that the estimate of the commutator can be reduced to the estimate from below of the following expression, where the test function $f\neq \one_{I_0}$.
In fact we can choose test function differently.
So consider the following more general expression
\begin{align}
\label{pipi1pf}
\pi(\al_0, f_{I_0}; T)=&\sum_{I\in \cD(I_0)} (b, h_I) \La T^* f_{I_0}\Ra_I h_I -\sum_{I\in \cD(I_0)} (b, h_I) \La f_{I_0}\Ra_I T^* h_I=
\\
&A_f-B_f\,.\notag
\end{align}

Let us now choose a very simple $f_{I_0}\neq \one_{I_0}$. Here it is convenient to think that $I_0$ is odd (and normalized $|I_0|=1/2$). In this case our choice of test function is the ``uncle" function:
$$
f_{I_0}= h_{sI_0}\,.
$$
Here $sI_0$ is a sibling of $I_0$.  By the definition of $T^*$,
$$
h_{sI_0}|I_0 =0,\quad T^*h_{sI_0}|I_0 = \sign(I_0, \hat I_0) h_{\hat I_0} = \pm \frac1{\sqrt{2}}\,.
$$
We plug into \eqref{pipi1pf} above and $B_f$ disappears.  The estimate of $A_f$ gives
$$
\sum_{I\in \cD(I_0)} (b, h_I)^2 \le C\|\pi(\al_0, f_{I_0}; T)\|_2^2\le C\|\text{the commutator}\|_2^2\,.
$$
In fact, for even $I_0$ the choice $f= h_{s(\hat I_0)}$ would work.
We are done.

\bigskip

\begin{remark}
\label{whatU}
What serves as ``the uncle''  function $F= f_{U}$ of $U$ in the two parameter case--this is not clear.
\end{remark}

\bigskip

Now let us come back to \eqref{pipi1p}, and let us try to obtain 
$
\sum_{I\in \cD(I_0)} (b, h_I)^2 \le C\|\text{the commutator}\|_2^2
$
directly from  \eqref{pipi1p}.
 
\bigskip

Recall that $T^* h_I= \sign(I, \hat I) h_{\hat I}$ for odd $I$ and $0$ for even.

Let us consider $I\in \cD(I_0)$ such that $\hat I\in \cD(I_0)$, and then we have a term $-\sign(I, \hat I)(b, h_I) h_{\hat I}$ in $-B$ sum (this is when $I$ is odd, when $I$ is even this term vanishes), and we have another term
$h_{\hat I} \cdot (b, h_{\hat I})$ with coefficient $c_I:= \La T^* \one_{I_0}\Ra_{\hat I}$. This term is always present, for even and odd $I$.

\bigskip

Let us understand, what is $c_I$, which is equal to the constant value of the constant function $T^* \one_{I_0}| I_0$ (recall that $\hat I\subset I_0$). Let $I_1$ denote the father of $I_0$, $I_2$ denote the father of $I_1$, et cetera.
$$
\one_{I_0}= (\one_{I_0}, h_{I_1}) h_{I_1} + (\one_{I_0}, h_{I_2}) h_{I_2}+\dots\,.
$$
Hence
$$
(T^*\one_{I_0})|I_0= (\one_{I_0}, h_{I_1}) (T^*h_{I_1})|I_0 + (\one_{I_0}, h_{I_2}) (T^*h_{I_2})|I_0+\dots\,.
$$
We recall that $|I_0|=1$. Definitely the first term of the series is at most $\frac1{2\sqrt{2}}$ by absolute value (it can be zero too). In fact, $ h_{I_1}$ gives the contribution of $\frac1{\sqrt{2}}$ and $T^*h_{I_1}$ gives the contribution $\frac12$. The next term of the series is at most $\frac1{2\sqrt{2}}$: $ h_{I_2}$ gives the contribution of $\frac12$ and $T^*h_{I_2}$ gives the contribution $0$ or  $\frac1{2\sqrt{2}}$.
It is clear that
\begin{equation}
\label{cI}
\frac18\le |c_I| \le \frac1{\sqrt{2}} <1\,.
\end{equation}

We see that  for $I$ such that $\hat I\subset I_0$ the sum $A-B$ of \eqref{pipi1p} has the term
$$
h_{\hat I}\cdot (-\sign(I, \hat I)\cdot [(b, h_I) - d_I (b, h_{\hat I})], \quad |d_I| \le \frac1{\sqrt{2}}\,,
$$
when $I$ is odd, and the  term
$$
h_{\hat I}\cdot (\sign(I, \hat I)\cdot d_I (b, h_{\hat I}), \quad |d_I| \ge \frac18\,,
$$
when $I$ is even.

\bigskip

All terms are orthogonal, and we get
\begin{equation}
\label{mi}
\sum_{I: \hat I\in \cD(I_0), I odd} |(b, h_I) - d_I (b, h_{\hat I})|^2 \le C\|\text{the commutator}\|_2^2\,,
\end{equation}
and
\begin{equation}
\label{miO}
\sum_{I: I\in \cD(I_0), I odd} |(b, h_{I})|^2 \le \sum_{I: \hat I\in \cD(I_0), I even} |(b, h_{\hat I})|^2 \le C\|\text{the commutator}\|_2^2\,,
\end{equation}
From these two inequalities and from the estimates of $d_I$, we can immediately obtain
\begin{equation}
\label{miA}
\sum_{I: I\in \cD(I_0)} |(b, h_{I})|^2 \le  C\|\text{the commutator}\|_2^2\,,
\end{equation}

\bigskip

Notice that \eqref{mi} alone would give us an interesting information. 

We want to get from \eqref{mi} alone  that
\begin{equation}
\label{1deep}
\sum_{I: \hat I\in \cD(I_0), I odd} (b, h_I)^2 \le C\|\pi(\al_0, f_{I_0}; T)\|_2^2\le C\|\text{the commutator}\|_2^2\,.
\end{equation}
(This is stated in \eqref{miO}, but suppose that we do not have it,  suppose that we have only \eqref{mi}.)
If this is obtained, then it is easy to get
\begin{equation}
\label{vse}
\sum_{I\in \cD(I_0), I odd} (b, h_I)^2 \le C\|\pi(\al_0, f_{I_0}; T)\|_2^2\le C\|\text{the commutator}\|_2^2\,.
\end{equation}

Now \eqref{mi} $\Rightarrow$ \eqref{1deep} seems is easy by the following lemmas.
\begin{lemma}
\label{Schur}
Let $|d_n|\le q<1$, let $b_n: = a_n - d_n a_{n-1}$. Assume that $b:=\{b_n\}_{n=0}^\infty \in \ell^2$, then $a:=\{a_n\}_{n=0}^\infty \in \ell^2$.
\end{lemma}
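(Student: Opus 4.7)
The plan is to solve the first-order linear recurrence $a_n = b_n + d_n a_{n-1}$ explicitly and then recognize the resulting formula as a convolution of $|b|$ with a geometric kernel, to which a standard $\ell^1 \ast \ell^2 \to \ell^2$ estimate applies. First I would iterate the recurrence from the initial datum (taking $a_{-1}:=0$, or equivalently reading the relation at $n=0$ as $a_0=b_0$, which has to be understood from context since the statement does not prescribe $a_{-1}$) to obtain the closed form
\[
a_n \;=\; \sum_{k=0}^{n} b_k \prod_{j=k+1}^{n} d_j,
\]
with the empty product for $k=n$ interpreted as $1$. Using the uniform bound $|d_j|\le q$, the coefficient satisfies $\bigl|\prod_{j=k+1}^{n} d_j\bigr|\le q^{n-k}$, giving the pointwise estimate
\[
|a_n| \;\le\; \sum_{k=0}^{n} q^{n-k}\,|b_k|.
\]

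Next I would view the right-hand side as the convolution on $\mathbb{Z}_{\ge 0}$ of the nonnegative sequence $\{|b_k|\}$ with the geometric kernel $g_m := q^m$. Since $g\in \ell^1(\mathbb{Z}_{\ge 0})$ with $\|g\|_{\ell^1}=(1-q)^{-1}$, Young's convolution inequality yields
\[
\|a\|_{\ell^2} \;\le\; \|g\|_{\ell^1}\,\|b\|_{\ell^2} \;=\; \frac{1}{1-q}\,\|b\|_{\ell^2},
\]
which is precisely the conclusion with an explicit constant depending only on $q$. If one prefers to avoid naming Young's inequality, the identical bound is obtained by writing $q^{n-k}=q^{(n-k)/2}\cdot q^{(n-k)/2}$, applying Cauchy--Schwarz to split these factors, and summing the resulting geometric series first in $k$ and then in $n$.

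There is essentially no obstacle in this argument; the only point requiring care is the boundary condition, since the statement defines $b_n=a_n-d_n a_{n-1}$ for $n\ge 0$ without prescribing $a_{-1}$. Any finite choice of $a_{-1}$ can be absorbed into $b_0$ without affecting the $\ell^2$ norm of $b$, so no generality is lost. In sharp contrast with the bi-parameter combinatorial morass analysed earlier in the paper, this one-parameter Schur-type step is completely elementary, which is exactly what makes it the right tool for upgrading \eqref{mi} to \eqref{1deep} in the one-parameter toy model of Section \ref{back1p}.
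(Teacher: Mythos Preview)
Your proof is correct and follows essentially the same route as the paper's: both iterate the recurrence to obtain $a_n=\sum_{k=0}^n b_k\prod_{j=k+1}^n d_j$, bound the products by $q^{n-k}$, and then conclude boundedness of the resulting lower-triangular operator on $\ell^2$. The only cosmetic difference is that the paper names this last step the Schur test, while you invoke Young's inequality (or the equivalent Cauchy--Schwarz splitting); for a kernel dominated by $q^{n-k}$ these are the same computation.
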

\begin{proof}
We can write
$$
a_n = b_n + d_n a_{n-1} = b_n + d_n b_{n-1} + d_nd_{n-1} a_{n-2}=\dots = b_n + D_1 b_{n-1} + D_{2} b_{n-2}+\dots +
D_n b_0\,,
$$
where $D_1:= d_n, D_2= d_n d_{n-1},\dots, |D_k| \le q^k$.
Therefore, $a=Ab$, where $A$ is a triangular matrix (with $1$'s one the diagonal), which clearly satisfies Schur test. Hence, $A:\ell^2\to \ell^2$ and lemma is proved.
\end{proof} 

In fact we need this type of lemma on the dyadic tree $T$ of dyadic lattice $\cD(I_0)$. But here it is.

\begin{lemma}
\label{SchurT}
Let $m(I, J)$ is a matrix on $T$ such that $m(I, J)= 0$ if $I\subsetneq J$ and for $c>0$
$$
m(I, J) =\Big(\frac{|I|}{|J|}\Big)^c
$$
otherwise. Then this matrix is a bounded operator from $\ell^2(T)$ to itself.
\end{lemma}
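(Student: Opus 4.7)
The plan is to apply Schur's test to the positive kernel $m(I,J) = (|I|/|J|)^c \mathbf{1}_{I \subseteq J}$ on the binary tree $T$, using the symmetric weight $w(I) := |I|^{1/2}$. This weight is natural: it is invariant under the tree's dyadic rescaling and it will make the two Schur conditions reduce to exactly the same geometric series.

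First I would verify the row-sum condition. Fixing $I$, the support requirement forces $J$ to lie on the chain of ancestors $I = p_0(I) \subsetneq p_1(I) \subsetneq \cdots$, with $|p_k(I)| = 2^k |I|$, so
\begin{align*}
\sum_J m(I,J)\, w(J) \;=\; \sum_{k \ge 0} \Big(\tfrac{|I|}{2^k |I|}\Big)^c (2^k |I|)^{1/2} \;=\; |I|^{1/2} \sum_{k \ge 0} 2^{k(1/2 - c)}.
\end{align*}
Next, for the column sum with $J$ fixed, the nonzero $I$'s are the descendants of $J$ --- $2^k$ of them at depth $k$, each of measure $2^{-k}|J|$ --- giving
\begin{align*}
\sum_I m(I,J)\, w(I) \;=\; \sum_{k\ge 0} 2^k \cdot 2^{-ck} \cdot (2^{-k}|J|)^{1/2} \;=\; |J|^{1/2} \sum_{k\ge 0} 2^{k(1/2 - c)},
\end{align*}
which is the same geometric series. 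Once convergence is granted, Schur's test yields $\|m\|_{\ell^2(T)\to\ell^2(T)} \le C(c) < \infty$, which is exactly the assertion of the lemma.

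The main obstacle is the convergence of $\sum_{k\ge 0} 2^{k(1/2 - c)}$: it holds for $c > 1/2$, not for all $c > 0$ as the lemma apparently asserts. Testing $m$ against the indicator $\mathbf{1}_{\{J\}}$ of a single node $J$ that has a full subtree below it already shows $\|m\mathbf{1}_{\{J\}}\|_{\ell^2}^2 = \sum_{k\ge 0} 2^k (2^{-k})^{2c} = \sum_{k\ge 0} 2^{k(1-2c)}$, which diverges for $c \le 1/2$; so the correct hypothesis must be $c > 1/2$, and one cannot avoid this by clever weight choices --- repeating the analysis with $w(I) = |I|^\alpha$ produces the two Schur conditions $\alpha < c$ and $\alpha > 1-c$, and an admissible $\alpha$ exists precisely when $c > 1/2$. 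Taking this corrected threshold for granted, the Schur argument above closes; for the intended application to the previous section one must therefore ensure that the contraction constant $q$ coming from $|d_n| \le q < 1$ satisfies $q < 2^{-1/2}$, i.e.\ $c = -\log_2 q > 1/2$.
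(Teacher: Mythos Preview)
Your approach is the same as the paper's---both apply the Schur test---but your version is sharper and your critique is correct. The paper runs the \emph{unweighted} Schur test (equivalently, interpolates between $\ell^1$ and $\ell^\infty$): it checks $\sum_J m(I,J)\le C_0$ and $\sum_I m(I,J)\le C_0$. With the kernel supported on $I\subseteq J$ and decaying as $(|I|/|J|)^c$, the second of these sums is $\sum_{k\ge 0}2^{k(1-c)}$, so the paper's argument as written actually requires $c>1$, not merely $c>0$. Your weighted Schur test with $w(I)=|I|^{1/2}$ genuinely improves the range to $c>1/2$, and your single-node test shows this threshold is sharp: the operator is unbounded for $c\le 1/2$, so the hypothesis ``$c>0$'' in the statement cannot be right. (One remark on the support condition: you take it to be $I\subseteq J$, whereas the paper literally writes $m(I,J)=0$ for $I\subsetneq J$; your reading is the one consistent with $(|I|/|J|)^c$ being a decay factor and with the intended application coming from the preceding sequential lemma, so this is a typo in the paper rather than an error on your part.) Your final caveat is also on point: the paper's bound $|d_I|\le 1/\sqrt{2}$ corresponds to exactly $c=1/2$, the borderline where the operator fails to be bounded, so invoking the tree lemma in that application would require a strict improvement of that constant.
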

\begin{proof}
This matrix acts boundedly from $\ell^\infty(T)$ to $\ell^1(T)$. In fact, 
$$
\sum_{J} m(I, J) \le C_0 \quad \forall I\,.
$$
But its adjoint also acts boundedly from $\ell^\infty(T)$ to $\ell^1(T)$. In fact
$$
\sum_{I} m(I, J) \le C_0 \quad \forall J\,.
$$
By interpolation it maps $\ell^2(T)$ to itself boundedly.
\end{proof}

\section{Back to  the estimate from below of \eqref{ppagain}, \eqref{ppagainstar}}
\label{analyze}

First we just loosely discuss what we are going to do. The ``actual idea" follows in Subsection \ref{onemore}.
Disclaimer: Subsection \ref{onemore} contains three deliberate mistakes.  We will indicate all of them at the end of Subsection \ref{onemore}. If the reader can circumvent those problems, then the main result will be proved: repeated dyadic paraproduct is bounded if and only if its symbol is in Chang--Fefferman dyadic $BMO$.

\begin{remark}
\label{extra}
We actually prove this result in Theorem \ref{mainskip1} below, but only under the extra conditions on the symbol. We cannot get rid of those extra assumptions at this time.
\end{remark}

Let us repeat: we just need to be able to estimate the  sum in \eqref{ppagain} from below by constant or by $c\sum_{I\times J\subset U} (b, h_I\otimes h_J)^2$. Even the estimate from below by
$$
c\sum_{I\times J\subset U, \,I\times J \, is \,\,m\, deep, \, I, J\, even, even} (b, h_I\otimes h_J)^2
$$
would be a meaningful achievement. Similarly, even the estimate from below by
$$
c\sum_{I\times J\subset U, \,I\times J \, is \,\,m\, deep, \, I, J\, odd, odd} (b, h_I\otimes h_J)^2
$$
would be a meaningful achievement as well.

Recall
\begin{align}
\label{ppagain1}
\pi\pi(\al_0, \one_U; \bfT)= &\sum (\al_0, h_I\otimes h_J) (\one_U , \tilde\one_I\otimes \tilde\one_J) T_1 h_I\otimes T_2 h_J -
\\
&\sum (\al_0, h_I\otimes h_J) (\one_U , T_1^*\tilde\one_I\otimes \tilde\one_J)  h_I\otimes T_2 h_J -\notag
\\
&\sum (\al_0, h_I\otimes h_J) (\one_U , \tilde\one_I\otimes T_2^*\tilde\one_J) T_1 h_I\otimes  h_J +\notag
\\
&\sum (\al_0, h_I\otimes h_J) (\one_U , T_1^*\tilde\one_I\otimes T_2^*\tilde\one_J)  h_I\otimes h_J= I -II-III+IV\,.\notag
\end{align}

\begin{defin}
\label{eps1}
Let $0<\eps_1\le \frac1{100}$. The rectangle $I\times J\subset U$ is called $\eps_1$ small if 
$$
|(\one_U , T_1^*\tilde\one_I\otimes \tilde\one_J)|\le \eps_1\,.
$$
\end{defin}

\begin{lemma}
\label{smallup}
Let $I\times J$ be $m$-deep in $U$, $m\ge 2$. Let $R=I\times J$ be $\eps_1$-small. Then $R_1=\hat I\times J$ and $R_2 = I\times \hat J$, $\hat R=\hat I \times \hat J$ are $\eps_1$-small.
\end{lemma}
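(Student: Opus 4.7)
The plan is to reduce the $\eps_1$-smallness of each parent rectangle to that of $R$ via the Haar-averaging identities
\[
\tilde\one_{\hat I} = \tilde\one_I - \sigma_I |\hat I|^{-1/2} h_{\hat I}, \qquad \tilde\one_{\hat J} = \tilde\one_J - \sigma_J |\hat J|^{-1/2} h_{\hat J},
\]
with $\sigma_I,\sigma_J=\pm 1$ recording which child of its parent is $I$, resp.\ $J$. Substituting these identities will express each parent's pairing as the pairing $(\one_U, T_1^*\tilde\one_I\otimes\tilde\one_J)$, which is bounded by $\eps_1$ by hypothesis, plus correction terms that I will show are killed by the $m\ge 2$ deep hypothesis.

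For $R_1=\hat I\times J$ the correction is $-\sigma_I|\hat I|^{-1/2}(\one_U, T_1^*h_{\hat I}\otimes\tilde\one_J)$. Computing on Haar functions, $T_1^*h_{\hat I}$ equals $0$ when $\hat I$ is ``even'' and a signed multiple of $h_{\hat{\hat I}}$ when $\hat I$ is ``odd''. In the non-trivial case the task reduces to showing $(\one_U, h_{\hat{\hat I}}\otimes\tilde\one_J)=0$. The hypothesis $m\ge 2$ gives $\hat{\hat I}\times\hat{\hat J}\subset U$, hence $\hat{\hat I}\times J\subset U$, so for each $y\in J$ the inner $x$-integral reduces to $\int_{\hat{\hat I}} h_{\hat{\hat I}}\,dx=0$ by the mean-zero property of the Haar function.

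For $R_2=I\times\hat J$ the correction is $-\sigma_J|\hat J|^{-1/2}(\one_U, T_1^*\tilde\one_I\otimes h_{\hat J})$. Rewriting as $(T_1\one_U,\tilde\one_I\otimes h_{\hat J})$ and expanding $\one_U$ in the Haar basis in $x$, only terms $(\one_U, h_K\otimes h_{\hat J})$ with $K$ even and $K\supseteq\hat{\hat I}$ survive the averaging against $\tilde\one_I$. For every such $K$ with $K\subseteq\hat I^{(m)}$, the rectangle $K\times\hat J$ lies inside $\hat I^{(m)}\times\hat J^{(m)}\subset U$, forcing $(\one_U,h_K\otimes h_{\hat J})=0$ and killing the bulk of the correction. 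The residual from the tail $K\supsetneq\hat I^{(m)}$ is to be controlled via Parseval on $\one_U$ together with the geometric decay of $1/|K_+|^{1/2}$, giving an estimate of order $\sqrt{|U|/(|\hat I^{(m)}||\hat J|)}$ which is to be absorbed into $\eps_1$ using the deepness. The $\hat R=\hat I\times\hat J$ case follows by expanding both identities simultaneously, producing three corrections, each handled by the previous arguments.

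The main obstacle will be the tail estimate in the $R_2$ (and hence $\hat R$) case: unlike $R_1$, where $T_1^*h_{\hat I}$ is a single compactly supported Haar function on $\hat{\hat I}$ and the correction vanishes outright, the Haar expansion of $T_1^*\tilde\one_I$ extends to all even ancestors of $I$, so strict vanishing is not automatic. Producing a bound genuinely $\le\eps_1$ rather than $O(\eps_1)$ requires care; this is precisely the sort of delicate combinatorial point the surrounding sections of the paper warn about, and a fully rigorous version of the lemma may need either $m$ larger than $2$ or additional structural use of the fact that $U=\{M_s^d\one_{U_0}\ge 1/16\}$.
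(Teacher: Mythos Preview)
Your argument for $R_1$ is the same as the paper's: the difference $\tilde\one_I-\tilde\one_{\hat I}$ is a multiple of $h_{\hat I}$, $T_1^*$ sends it to a multiple of $h_{\hat{\hat I}}$, and $2$-deepness forces $(\one_U,h_{\hat{\hat I}}\otimes\tilde\one_J)=0$. This yields the exact equality $(\one_U,T_1^*\tilde\one_R)=(\one_U,T_1^*\tilde\one_{R_1})$, so $R_1$ inherits $\eps_1$-smallness on the nose.

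For $R_2$ (and hence $\hat R$) the paper's proof simply asserts that ``the same proof works.'' You correctly saw that it does \emph{not}: the correction term is now $(\one_U,T_1^*\tilde\one_I\otimes h_{\hat J})$, and $T_1^*\tilde\one_I$ has Haar support running through all even ancestors of $I$, so locality gives no vanishing. Your proposed tail estimate via Parseval and geometric decay cannot be made $\le\eps_1$ for fixed $m\ge 2$ --- the bound $\sqrt{|U|/(|\hat I^{(m)}||\hat J|)}$ has no reason to be small --- and you were right to flag this as the main obstacle. In fact this is precisely the first of the ``deliberate mistakes'' the paper itself catalogues in the later discussion subsection: the $R_1$ direction is genuine, but the $R_2$ direction is false as stated, and smallness cannot be propagated in the $J$-direction by this mechanism. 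So your skepticism about $R_2$ is not a defect of your attempt; it is the correct diagnosis of a gap that the paper intentionally left (and later owns up to).
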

\begin{proof}
Let us consider $R_1$. We look at
$$
(\one_U , T_1^*\tilde\one_I\otimes \tilde\one_J) - (\one_U , T_1^*\tilde\one_{\hat I}\otimes \tilde\one_J)
$$
and notice that  $\tilde\one_I - \tilde\one_{\hat I}$ is proportional to $h_{\hat I}$, so 
$\tilde\one_I\otimes \tilde\one_J - \tilde\one_{\hat I}I\otimes \tilde\one_J$  is proportional to $h_{\hat I}\otimes h_J$.
Hence $T_1^* (\tilde\one_I\otimes \tilde\one_J - \tilde\one_{\hat I}I\otimes \tilde\one_J)$ is proportional to
$h_{\mathcal{I}}\otimes h_J$, where $\mathcal{I}$ is the parent of $\hat I$ (grandparent of $I$). The scalar product with $\one_U$ returns $0$ if $R$ is at least $2$ deep.

\bigskip

\begin{defin}
Let $\tilde\one_R$ temporarily denote $\tilde\one_I\otimes \tilde\one_J $ for $R= I\times J$.
\end{defin}

We proved that
\begin{equation}
\label{equ1}
(\one_U , T_1^* \tilde\one_R) = (\one_U , T_1^* \tilde\one_{R_1})\,,
\end{equation}
if $R$ is at least $2$-deep. Therefore, $R_1$ is also $\eps_1$ small.

\bigskip

The same proof works for $R_2$. And then going from $R_2$ to $\hat R$ involves the same reasoning.
In particular we get
\begin{equation}
\label{equ12}
(\one_U , T_1^*  \tilde\one_R) = (\one_U , T_1^*  \tilde\one_{R_1})= (\one_U , T_1^*  \tilde\one_{R_2})= (\one_U , T_1^*  \tilde\one_{\hat R})\,.
\end{equation}
And we conclude that all $R_1, R_2, \hat R$ are $\eps_1$ small as soon as $R$ is such.

\end{proof}

\begin{lemma}
\label{smalldown}
Let $R= I\times J$ be such that $R\subset U$ and moreover $R$ is at least $1$ deep. Let $R$ be $\eps_1$-small. Let $I'\in ch(I)$, $J'\in ch (J)$. Then $R^1=\hat I'\times J$ and $R^2 = I\times \hat J'$, $ R^{12}=\hat I' \times \hat J'$ are $\eps_1$-small.
\end{lemma}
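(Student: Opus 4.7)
The plan is to mirror the argument of Lemma~\ref{smallup} in the downward direction, interpreting $R^1,R^2,R^{12}$ as the one-level dyadic descendants of $R$ in each coordinate (the literal reading $R^1=\hat{I'}\times J=I\times J$ would render the statement vacuous, which I take to be a typographical oversight). The central identity I would establish is that, for every child $I'\in\ch(I)$,
\[
(\one_U,\,T_1^*\tilde\one_{I'}\otimes\tilde\one_J) \;=\; (\one_U,\,T_1^*\tilde\one_I\otimes\tilde\one_J);
\]
once this is in hand, the $\eps_1$-smallness of $R$ transfers immediately to $R^1=I'\times J$, and the symmetric version in the second parameter yields the corresponding conclusion for $R^2$.

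To prove the identity, I would first note that $\tilde\one_{I'}-\tilde\one_I=\pm|I|^{-1/2}h_I$, so that the difference of the two scalar products above is a constant multiple of $(\one_U,(T_1^* h_I)\otimes\tilde\one_J)$. If $I$ is even, then $T_1^* h_I=0$ and the identity is automatic. If $I$ is odd, then $T_1^* h_I=\sign(I,\hat I)\,h_{\hat I}$, and the task reduces to verifying
\[
(\one_U,\,h_{\hat I}\otimes\tilde\one_J) \;=\; 0.
\]
This is where the $1$-depth hypothesis enters: from $\hat R=\hat I\times\hat J\subset U$ we also get $\hat I\times J\subset U$, so for each fixed $y\in J$ the slice $x\mapsto\one_U(x,y)$ is identically $1$ on $\hat I$, its $h_{\hat I}$-Haar coefficient vanishes, and integrating in $y$ against $\tilde\one_J$ finishes the job.

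For $R^{12}$ I would chain the two one-variable reductions: first descend in the $I$-direction to $R^1=I'\times J$ using the $1$-depth of $R$, and then in the $J$-direction to $R^{12}=I'\times J'$ using the $1$-depth of $R^1$, which is immediate because $\widehat{R^1}=I\times\hat J\subset\hat I\times\hat J=\hat R\subset U$. The only conceptual point worth flagging---and the closest thing to an obstacle---is the asymmetry with Lemma~\ref{smallup}: the downward step needs only $1$-depth (rather than the $2$-depth that was required going up), because now the Haar function being commuted in sits at the scale of $R$ itself, so that $T_1^* h_I$ lives already at the strictly larger scale $\hat I$ and is safely absorbed by $U$. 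With this identification no additional combinatorics are needed.
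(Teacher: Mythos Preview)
Your reading of the typo is correct: as written the lemma is vacuous, and comparison with Lemma~\ref{smalldown1} confirms that the intended rectangles are $R^1=I'\times J$, $R^2=I\times J'$, $R^{12}=I'\times J'$. Your argument for $R^1$ is exactly the paper's: the difference $\tilde\one_{I'}-\tilde\one_I$ is a multiple of $h_I$, $T_1^*h_I$ is either $0$ or $\pm h_{\hat I}$, and the pairing $(\one_U,h_{\hat I}\otimes\tilde\one_J)$ vanishes because $\hat I\times J\subset U$ under the $1$-depth hypothesis. Your observation that only $1$-depth (not $2$) is needed downward is also correct and worth keeping.

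The gap is in your treatment of $R^2$ and $R^{12}$. The phrase ``symmetric version in the second parameter'' does not apply, because $\eps_1$-smallness is defined \emph{asymmetrically}: it involves only $T_1^*$ acting in the first variable. For $R^2=I\times J'$ the relevant difference is
\[
(\one_U,\,T_1^*\tilde\one_I\otimes(\tilde\one_{J'}-\tilde\one_J))
\;=\;\pm|J|^{-1/2}\,(\one_U,\,(T_1^*\tilde\one_I)\otimes h_J),
\]
and there is no reason for this to vanish: $T_1^*\tilde\one_I$ is a sum of Haar functions $h_{\hat K}$ over all odd $K\supsetneq I$, hence is supported far outside any neighborhood of $I$, so the slice argument you used for $R^1$ cannot be run. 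The paper's own proof (``the proof is exactly the same'') is the identical handwave and suffers from the identical defect; in fact the paper explicitly acknowledges in Section~\ref{disc}, item~(1), that for the companion Lemma~\ref{smallup} the $R_2$ claim is \emph{false}: smallness can be propagated in the first coordinate but not the second. The same obstruction applies here, so the $R^2$ and $R^{12}$ conclusions of Lemma~\ref{smalldown} are not actually provable as stated.
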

\begin{proof}
The proof is exactly the same, again the difference of two characteristic functions normalized as we did gives us a proportional to Haar function, $T_1^*$ moves it only one step up, and so the result is still orthogonal to $\one_U$.
\end{proof}

Now we can prove (only with $I$ and $J$ interchanged) that the same \eqref{equ12} will hold for the terms $(\one_U , \tilde\one_I\otimes T_2^*\tilde\one_J)$ instead of $(\one_U ,  T_1^*\tilde\one_I\otimes\tilde\one_J)$. And then, again we can repeat the same reasoning for the terms
$(\one_U , T_1^*\tilde\one_I\otimes T_2^*\tilde\one_J)$.

\bigskip

Conclusion: 
\begin{itemize}
\item 1) in \eqref{ppagain} either all terms  $(\one_U ,  T_1^*\tilde\one_I\otimes\tilde\one_J)$, $(\one_U , \tilde\one_I\otimes T_2^*\tilde\one_J)$, $(\one_U , T_1^*\tilde\one_I\otimes T_2^*\tilde\one_J)$ are $\frac1{100}$ small, or
\item 2) two brand of terms, e.g. all $(\one_U ,  T_1^*\tilde\one_I\otimes\tilde\one_J)$, $(\one_U , \tilde\one_I\otimes T_2^*\tilde\one_J)$ are $\frac1{100}$  small and all $(\one_U , T_1^*\tilde\one_I\otimes T_2^*\tilde\one_J)$ are NOT $\frac1{100}$  small (there are three choices of such a situation), or
\item 3)  one brand of terms, e.g. all $(\one_U ,  T_1^*\tilde\one_I\otimes\tilde\one_J)$ are all  $\frac1{100}$  small, and two other brands are NOT $\frac1{100}$  small, or
\item 4) all terms of all three brands are NOT $\frac1{100}$  small.
\end{itemize}

Let us consider item 1) and item 4). The rest should follow suit.

\bigskip

So suppose all terms of all three brands are $\frac1{100}$  small. Then notice that $(b, h_I\otimes h_J)$ from the first line of \eqref{ppagain} has coefficient $(\one_U \tilde\one_I\otimes\tilde\one_J)=1$--and of course all this stands in front of some $h_{I'}\otimes h_{J'}$ (children). If we collect the same  $h_{I'}\otimes h_{J'}$  terms from all $3$ other lines of \eqref{ppagain} the $b$-Haar coefficient (maybe not $(b, h_I\otimes h_J)$, but some other Haar coefficient of a {\it some} rectangle ``close'' to $I\times J$) will appear but with the factor less or equal than $\frac1{100}$ by absolute value.  So we will have that the sum in \eqref{ppagain} is exactly equal to the sum of orthogonal terms
$$
[1\cdot (b, h_I\otimes h_J)+ \eps_1 (b, \dots) +\eps_2 (b, \dots) + \eps_3(b, \dots)] h_{I'}\otimes h_{J'}\,,
$$
where $\dots$ stand for some $h_{I''_i\times J''_i}$, $i=1,2,3$, where $I''_i\times J''_i$ is a close relative of $I\times J$.
Hence \eqref{ppagain} has the norm squared bounded from below by the sum
$$
|1\cdot (b, h_I\otimes h_J)+ \eps_1 (b, \dots) +\eps_2 (b, \dots) + \eps_3(b, \dots)|^2\,,
$$
where the summation is over all those $I\times J$, that are, say, at least $2$-deep in $U$ and $I, J$ are both even.
And of course $|\eps_i|\le 1/100$.

Then using the analog of Lemma \ref{SchurT} but for double tree, we conclude that the square of the norm of the sum in \eqref{ppagain} is bounded from below by
the sum
$$
c\,| (b, h_I\otimes h_J)|^2\,,
$$
where the summation is over all $I\times J$, say, at least $2$-deep in $U$ and  and $I, J$ are both even,
 and we got what we wanted at least for $even, even$ part of $b$.

\bigskip

Let us consider now item 4): all coefficients of all three brands are bigger than $\frac1{100}$ by absolute value.

Let us start with the fourth line of \eqref{ppagain}. It contains $I$ even, $J$ even sum, and this sum is orthogonal to the rest. So immediately we get (using that $|(\one_U , T_1^*\tilde\one_I\otimes T_2^*\tilde\one_J)|\ge 1/100$ by assumption 4))
\begin{equation}
\label{ee1}
\sum_{I even, \, J\, even} (b, h_I\otimes h_J)^2 \le C\|\text{sum of \eqref{ppagain}}\|_2^2\,.
\end{equation}

So looks like we estimated $\|b^{even, even}\|_{BMO_{ChF}}$ from above  by the norm of the repeated commutator.

\bigskip

%To estimate the whole $\|b^{even, even}\|_{BMO_{ChF}}$ from above  by the norm of the repeated commutator seems to require more reasoning. Let us start.
%Now let us consider the third line of \eqref{ppagain}. It has the sum over even $I$ and even $J$, which is in $odd, even$ space Lines $1,2$ are orthogonal to $odd, even$ space, but in the fourth line we have such terms and they intervene.

\subsection{``A proof'' of the Chang--Fefferman BMO norm estimate of $b^{odd, odd}$}
\label{onemore}

In this subsection we prefer to work with $T_1^*, T_2^*$ rather than $T_1, T_2$ and we notice that this change is basically symmetric. So our goal will be to estimate  Chang--Fefferman norm of $b^{even, even}$ by the norm of
$[T_1^*, [T_2^*, b]]$.

Absolutely symmetrically to what has been done above we can reduce the above mentioned  task to estimating the norm
of the following sum from below by Chang--Fefferman norm of $b^{odd, odd}$.
\begin{align}
\label{ppagain1}
\pi\pi(\al_0, \one_U; \bfT^*)= &\sum (\al_0, h_I\otimes h_J) (\one_U , \tilde\one_I\otimes \tilde\one_J) T_1^* h_I\otimes T_2^* h_J -
\\
&\sum (\al_0, h_I\otimes h_J) (\one_U , T_1\tilde\one_I\otimes \tilde\one_J)  h_I\otimes T_2^* h_J -\notag
\\
&\sum (\al_0, h_I\otimes h_J) (\one_U , \tilde\one_I\otimes T_2\tilde\one_J) T_1^* h_I\otimes  h_J +\notag
\\
&\sum (\al_0, h_I\otimes h_J) (\one_U , T_1\tilde\one_I\otimes T_2\tilde\one_J)  h_I\otimes h_J= I -II-III+IV\,.\notag
\end{align}
The summation is over $I, J$, $I\times J\subset U$.

\begin{defin}
Let $0<\eps_1\le \frac1{100}$. The rectangle $I\times J\subset U$ is called $\eps_1$ small if 
$$
|(\one_U , T_1\tilde\one_I\otimes \tilde\one_J)|\le \eps_1\,.
$$
\end{defin}

Let us start with the fourth line of \eqref{ppagain1}. It contains $I$, $J$ (odd, odd) sum, and this sum is orthogonal to the rest. So immediately we get, using the assumption $|(\one_U , T_1\tilde\one_I\otimes T_2\tilde\one_J)|\ge \eps_2$, that
\begin{equation}
\label{oo1}
\sum_{I odd, \, J\, odd} (b, h_I\otimes h_J)^2 \le C\eps_2^{-2}\|\text{sum of \eqref{ppagain}}\|_2^2\,.
\end{equation}

The next lemma says that if $|(\one_U , T_1\tilde\one_I\otimes T_2\tilde\one_J)|\ge \eps_2$ happens for one $I\times J$ inside $U$ it happens for all such rectangles inside $U$. (If $U$  is connected, which we can assume by considering connected components one by one.)

We just saw that the assumption $|(\one_U , T_1\tilde\one_I\otimes T_2\tilde\one_J)|\ge \eps_2$ implies \eqref{oo1}.  Let us now assume that $|(\one_U , T_1\tilde\one_I\otimes T_2\tilde\one_J)|\le \eps_2$ for (odd, odd) $I, J$, then by lemmas below we can assume that for all $I\times J$ inside $U$ we have
$$
|(\one_U , T_1\tilde\one_I\otimes T_2\tilde\one_J)|\le \eps_2\,.
$$

\bigskip

\begin{defin}
Let $\tilde\one_R$ temporarily denote $\tilde\one_I\otimes \tilde\one_J $ for $R= I\times J$.
\end{defin}

\begin{lemma}
\label{smallup1}
Let $I\times J$ be $m$-deep in $U$, $m\ge 2$. Let $R=I\times J$ be $\eps_1$-small. Then $R_1=\hat I\times J$ and $R_2 = I\times \hat J$, $\hat R=\hat I \times \hat J$ are $\eps_1$-small.
\end{lemma}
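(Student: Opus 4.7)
The statement is the $\bfT$-analogue of Lemma \ref{smallup}, obtained by replacing $T_1^{*}$ everywhere with $T_1$; my plan is to mirror that earlier proof step by step. Concretely, I will establish the chain of equalities
\[
(\one_U, T_1 \tilde\one_R) \;=\; (\one_U, T_1 \tilde\one_{R_1}) \;=\; (\one_U, T_1 \tilde\one_{\hat R}) \;=\; (\one_U, T_1 \tilde\one_{R_2}),
\]
which immediately transfers the $\eps_1$-smallness of $R$ to $R_1$, $\hat R$, and $R_2$.

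For the step $R \to R_1$, the telescoping identity $\tilde\one_I - \tilde\one_{\hat I} = c\,h_{\hat I}$ gives $\tilde\one_R - \tilde\one_{R_1} = c\,h_{\hat I} \otimes \tilde\one_J$. Applying $T_1$ (which acts only on the first factor) yields $0$ when $\hat I$ is odd and a scalar multiple of $(h_I - h_{s(I)}) \otimes \tilde\one_J$ when $\hat I$ is even. In the latter case the resulting tensor is supported inside $\hat I \times J \subset \hat R \subset U$ (here $m \ge 1$ is enough, and $m \ge 2$ is a comfortable margin), so its pairing with $\one_U$ factors as $\int_{\hat I}(h_I - h_{s(I)}) \cdot \int_J \tilde\one_J = 0 \cdot 1 = 0$. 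Notice this is cleaner than the $T_1^{*}$-version of Lemma \ref{smallup}, because $T_1$ lowers Haar supports by one generation rather than raising them.

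For $R \to R_2$ and, by the same token, $R_1 \to \hat R$ and $R_2 \to \hat R$, the differences are proportional to $\tilde\one_I \otimes h_{\hat J}$ (respectively $\tilde\one_{\hat I} \otimes h_{\hat J}$), and $T_1$ leaves the $y$-factor $h_{\hat J}$ untouched. This is where I expect the main obstacle to lie: $T_1\tilde\one_I$ is not compactly supported, so one cannot factor the inner product as neatly as before. My plan is to integrate in $y$ first and observe that $\phi(x) := \int \one_U(x,y) h_{\hat J}(y)\,\dif y$ vanishes for every $x$ whose $y$-fiber $U_x$ contains $\hat J$; the $m \ge 2$ depth guarantees that this covers the ancestor $\mathcal I$ of $I$. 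Telescoping $\tilde\one_I$ against its chain of even dyadic ancestors, and using that $T_1$ kills Haar functions at odd ancestors, I aim to show that the surviving contributions to $\int (T_1\tilde\one_I)(x)\phi(x)\,\dif x$ reduce to pieces supported inside $\mathcal I$, where $\phi$ vanishes. This reproduces the argument of Lemma \ref{smallup} and closes the chain of equalities.
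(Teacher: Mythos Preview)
Your argument for the step $R \to R_1$ is correct and in fact cleaner than the paper's own text (which carries typos from the $T_1^*$-version). The key point, as you observe, is that $T_1(\tilde\one_I - \tilde\one_{\hat I})$ is a multiple of $T_1 h_{\hat I}$, which is either zero or supported inside $\hat I$, so the pairing with $\one_U$ vanishes once $\hat I \times J \subset U$.

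Your plan for the step $R \to R_2$, however, contains a genuine gap that cannot be closed---and the paper itself flags this as a \emph{deliberate mistake} in Section~\ref{disc}, item~(1). You correctly note that $\phi(x) := \int \one_U(x,y)\, h_{\hat J}(y)\,\dif y$ vanishes for $x \in \mathcal I$ (the grandparent of $I$), since $m \ge 2$ forces $\mathcal I \times \hat J \subset U$. But your claim that the telescoping of $T_1\tilde\one_I$ ``reduces to pieces supported inside $\mathcal I$'' is false. Expanding $\tilde\one_I = \sum_{K \supsetneq I} h_K(I)\, h_K$ gives
\[
T_1\tilde\one_I \;=\; \sum_{\substack{K \supsetneq I \\ K \text{ even}}} h_K(I)\,\bigl(h_{K_+} - h_{K_-}\bigr),
\]
and for every even ancestor $K \supsetneq \mathcal I$ the summand $h_{K_+} - h_{K_-}$ is supported in $K$, not in $\mathcal I$. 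On that larger set nothing forces $\phi$ to vanish: $U$ is an arbitrary dyadic open set, and for $x \notin \mathcal I$ the fiber $U_x$ need not contain $\hat J$. Hence $\int (T_1\tilde\one_I)\,\phi$ has no reason to be zero. Your appeal to Lemma~\ref{smallup} does not help either, since that lemma suffers from exactly the same defect in the $R_2$ direction; the paper groups both lemmas together in its list of deliberate errors. In the paper's own words: with $T_1$ one can ``spread smallness'' horizontally (the $R_1$ direction) but not vertically (the $R_2$ direction), so the full statement of Lemma~\ref{smallup1} is in fact not provable as written.
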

\begin{proof}
Let us consider $R_1$. We look at
$$
(\one_U , T_1\tilde\one_I\otimes \tilde\one_J) - (\one_U , T_1\tilde\one_{\hat I}\otimes \tilde\one_J)
$$
and notice that  $\tilde\one_I - \tilde\one_{\hat I}$ is proportional to $h_{\hat I}$, so 
$\tilde\one_I\otimes \tilde\one_J - \tilde\one_{\hat I}I\otimes \tilde\one_J$  is proportional to $h_{\hat I}\otimes h_J$.
Hence $T_1 (\tilde\one_I\otimes \tilde\one_J - \tilde\one_{\hat I}I\otimes \tilde\one_J)$ is proportional to
$h_{\mathcal{I}}\otimes h_J$, where $\mathcal{I}$ is the parent of $\hat I$ (grandparent of $I$). The scalar product with $\one_U$ returns $0$ if $R$ is  in $U$.

We proved that
\begin{equation}
\label{equ1}
(\one_U , T_1 \tilde\one_R) = (\one_U , T_1 \tilde\one_{R_1})\,,
\end{equation}
if $R$ is in $U$. Therefore, $R_1$ is also $\eps_1$ small.

\bigskip

The same proof works for $R_2$. And then going from $R_2$ to $\hat R$ involves the same reasoning.
In particular we get
\begin{equation}
\label{equ12}
(\one_U , T_1  \tilde\one_R) = (\one_U , T_1  \tilde\one_{R_1})= (\one_U , T_1 \tilde\one_{R_2})= (\one_U , T_1 \tilde\one_{\hat R})\,.
\end{equation}
And we conclude that all $R_1, R_2, \hat R$ are $\eps_1$ small as soon as $R$ is such.

Notice that there was nothing special about $T_1$ in this equality. So we can write the same thing for the action by $T_2$ and for the action of $T_1\otimes T_2$:

\begin{equation}
\label{equ12T2}
(\one_U , T_2  \tilde\one_R) = (\one_U , T_2  \tilde\one_{R_1})= (\one_U , T_2 \tilde\one_{R_2})= (\one_U , T_2\tilde\one_{\hat R})\,.
\end{equation}

\begin{equation}
\label{equ12T12}
(\one_U , T_1\otimes T_2  \tilde\one_R) = (\one_U , T_1\otimes T_2  \tilde\one_{R_1})= (\one_U , T_1\otimes T_2 \tilde\one_{R_2})= (\one_U , T_1\otimes T_2\tilde\one_{\hat R})\,.
\end{equation}

\end{proof}

\begin{lemma}
\label{smalldown1}
Let $R= I\times J$ be such that $R\subset U$ and moreover $R$ is at least $1$ deep. Let $R$ be inside $U$, and let it be $\eps_1$-small. Let $I'\in ch(I)$, $J'\in ch (J)$. Then $R^1=I'\times J$ and $R^2 = I\times J'$, $ R^{12}= I' \times J'$ are $\eps_1$-small.
\end{lemma}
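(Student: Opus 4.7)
The plan is to mirror the proof of Lemma \ref{smalldown} (and the pattern of Lemma \ref{smallup1}), going from $R$ to $R^1, R^2, R^{12}$ one step at a time and showing that the scalar product defining $\eps_1$-smallness is actually preserved exactly. The core identity I would rely on is $\tilde\one_{I_\pm} - \tilde\one_I = \pm |I|^{-1/2} h_I$, and similarly in the $J$ variable, so that passing to a child replaces a characteristic function by a genuine Haar function at the parent scale, which we then feed into $T_1$ (or see vanish under $\one_U$).

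First I would handle $R^1 = I' \times J$. The identity above gives
\[
(\one_U, T_1 \tilde\one_{I'} \otimes \tilde\one_J) - (\one_U, T_1 \tilde\one_I \otimes \tilde\one_J) = \pm |I|^{-1/2} (\one_U, T_1 h_I \otimes \tilde\one_J).
\]
If $I$ is odd then $T_1 h_I = 0$. If $I$ is even then $T_1 h_I = h_{I_+} - h_{I_-}$, whose support lies in $I$; since $I \times J \subset R \subset U$ we have $\one_U \equiv 1$ on $I_\pm \times J$, and the mean-zero property in $x$ of each $h_{I_\pm}$ kills the scalar product. Either way the difference is $0$, and so $R^1$ inherits the $\eps_1$-smallness of $R$. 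For $R^2 = I \times J'$ I would run the symmetric computation in the $J$ variable, getting
\[
(\one_U, T_1 \tilde\one_I \otimes \tilde\one_{J'}) - (\one_U, T_1 \tilde\one_I \otimes \tilde\one_J) = \pm |J|^{-1/2} (\one_U, T_1 \tilde\one_I \otimes h_J).
\]
The auxiliary function $G(x) := \int \one_U(x,y)\, h_J(y)\, \dif y$ vanishes on $\hat I$ because $R$ being $1$-deep gives $\hat I \times J \subset \hat R \subset U$, hence $J \subset U_x$ for $x \in \hat I$, and $h_J$ has mean zero. Expanding $T_1 \tilde\one_I$ in the Haar basis and using that its only non-vanishing modes $h_M$ have $M$ odd with $\hat M$ an even ancestor of $I$, each pairing $(\one_U, h_M \otimes h_J)$ that could contribute is supported on a rectangle of the form $M \times J$; for the closest such $M$ (namely $M = \hat I$ when $I$ is even, or more generally the two children of $\hat{\hat I}$) the containment $\hat I \times J \subset U$ forces vanishing by the same mean-zero argument, leaving only higher-scale contributions. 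Finally, $R^{12} = I' \times J'$ is obtained by iterating the $R^1$ and $R^2$ reductions in sequence.

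The main obstacle is exactly the $R^2$ step: although $G$ vanishes on $\hat I$, the function $T_1 \tilde\one_I$ has Haar content on all even strict ancestors of $I$, and a priori the Haar coefficients $(\one_U, h_M \otimes h_J)$ associated with the great-uncle scales of $I$ need not vanish, since rectangles of the form $M \times J$ with $|M| \geq 4|I|$ are not guaranteed to sit inside $U$. To push through exact equality rather than merely $|J|^{1/2}\eps_1$-closeness, I expect one must exploit the specific maximal-function construction $U = \{M_s^d \one_{U_0} \ge 1/16\}$ of Definition \ref{UU0}, which provides a telescoping/stopping-time control on the slices $U_x$ at ancestor scales. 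If that exact cancellation ultimately fails, the fallback is to accept the weakened conclusion that $R^2$ is $C\eps_1$-small, with the constant absorbed later by choosing $\eps_1$ smaller; this matches the spirit of the extra assumption flagged in Remark \ref{extra}.
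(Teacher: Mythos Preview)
Your $R^1$ argument is correct and is exactly what the paper's one-line proof (``$T_1$ moves it only one step down, and so the result is still orthogonal to $\one_U$'') is pointing at: passing to a child of $I$ replaces $\tilde\one_I$ by $\tilde\one_{I'}$, the difference is proportional to $h_I$, and $T_1 h_I$ (if nonzero) is supported in $I\times J\subset U$, where $\one_U\equiv 1$ forces the pairing to vanish.

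For $R^2$ you have correctly identified the genuine gap, and in fact you are seeing further than the paper's stated proof. The difference there is proportional to $(\one_U, T_1\tilde\one_I\otimes h_J)$, and, as you note, $T_1\tilde\one_I$ has Haar content at the children of \emph{every} even strict ancestor of $I$; nothing forces those large rectangles $M\times J$ to lie in $U$. This is precisely one of the ``deliberate mistakes'' the paper flags in Subsection~\ref{disc}, item~(1): the claim for the $J$-direction in Lemmas~\ref{smallup}/\ref{smallup1} is declared \emph{incorrect}, and since the proof of Lemma~\ref{smalldown1} is explicitly ``exactly the same'', the same failure carries over. In short, smallness propagates when you change $I$ (horizontally) but \emph{not} when you change $J$ (vertically); the statement for $R^2$ and $R^{12}$ is simply false in general.

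Your proposed rescues do not work. The $1$-deep hypothesis gives $\hat I\times J\subset U$, which handles only the nearest ancestor scale; the maximal-function construction $U=\{M_s^d\one_{U_0}\ge 1/16\}$ enlarges $U_0$ by a bounded factor in area, not by arbitrary aspect ratio, so it cannot absorb rectangles $M\times J$ with $|M|\gg|I|$. And the fallback to ``$C\eps_1$-small'' fails because the offending term $\pm|J|^{-1/2}(\one_U, T_1\tilde\one_I\otimes h_J)$ is a quantity of order~$1$, completely independent of $\eps_1$; you would get $R^2$ is $(\eps_1+C)$-small, which is vacuous.
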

\begin{proof}
The proof is exactly the same, again the difference of two characteristic functions normalized as we did gives us a function proportional to Haar function, $T_1$ moves it only one step down, and so the result is still orthogonal to $\one_U$.
\end{proof}

Let us fix two small constants $\eps_2<<\eps_1 \le \frac1{100}$.  We look at the terms 
$(\one_U , T_1\tilde\one_I\otimes T_2\tilde\one_J) $ in \eqref{ppagain1} and we know by Lemma \ref{smallup1} and Lemma \ref{smalldown1} that all those terms are the same. So if they are   for all $I, J$ (even, even) are like this:
$$
|(\one_U , T_1\tilde\one_I\otimes T_2\tilde\one_J)| \ge \eps_2
$$
then \eqref{ee1} is immediately true with $C=\eps_2^{-2}$. Now suppose that all of those terms are like that:
$$
|(\one_U , T_1\tilde\one_I\otimes T_2\tilde\one_J)| \le \eps_2\,.
$$
Notice that then the same is true for $I, J$  (even, odd). In fact,  this follows from \eqref{equ12T12}. Those (even, odd) output terms also happens in this sum
$$
 \sum (\al_0, h_{I'}\otimes h_{J'}) (\one_U , \tilde\one_{I'}\otimes T_2\tilde\one_{J'})T_1^* h_{I'}\otimes  h_{J'},
 $$
 when $I', J'$ are (odd, odd). Notice that the sum itself is in (even, odd) space.
 Therefore, when we consider the (even, odd) output it  can come only from the fourth and the third line of \eqref{ppagain1}. In fact, the first line of \eqref{ppagain1} is in (even, even) space and the second line is totally in (odd, even) space. Therefore, each term of this (even, odd) part of the output  will have the form ($I$ odd, $J$ odd):
 
\begin{align*}
&h_I\otimes h_J [ (\al_0, h_{\hat I} \otimes h_J)(\text{smaller by abs. value than}\, \eps_2 )+ (\al_0, h_{ I} \otimes h_J)
\cdot (\one_U , \tilde\one_{ I}\otimes T_2\tilde\one_J)]\,.
\end{align*}
Hence, taking (even, odd) part of \eqref{ppagain1} we have
$$
\sum_{I odd, J odd} |(\al_0, h_{\hat I} \otimes h_J)(\text{smaller by abs. value than}\, \eps_2 )+ (\al_0, h_{\hat I} \otimes h_J) (\one_U , \tilde\one_{ I}\otimes T_2\tilde\one_J)|^2 \le C\|\text{sum in }\, \eqref{ppagain1}\|_2^2\,.
$$

Now we remember that   either 1) all $|(\one_U , \tilde\one_{\hat I}\otimes T_2\tilde\one_J)| \ge \eps_1>>\eps_2$; or 2)  
all $|(\one_U , \tilde\one_{\hat I}\otimes T_2\tilde\one_J)| \le \eps_1$.

%%%%%%%%%%
\bigskip

Let us formulate now an analog of Lemma \ref{SchurT}, but for a dyadic bi-tree $\cT\times\cT$. Bi-tree is a graph of dyadic rectangles, where the edges connect rectangles iff one of them is inside the other and the ratio of their areas is $2$.  In the next lemma $R=I\times J, R'=I'\times J'$. In other words $R, R'$ are two end-points of the same edge if either $I$ is a parent of $I'$, $J'=J$, or $I'$ is a parent of $I$, $J'=J$; or $J$ is a parent of $J'$, $I'=I$, or $J'$ is a parent of $J$, $I'=I$.
\begin{lemma}
\label{SchurTT2}
Let us consider a matrix enumerated by the couples of vertices $(R, R')$,  $R'\in \cT\times\cT$, $R'\in \cT\times\cT$. Let its matrix elements
$m(R, R')=0$ unless $R\subset R'$ or $R'\subset R$. In the latter cases, if $R\subset R'$, then $m(R, R') =\Big(\frac{|R|}{|R'|}\Big)^{A}$, and  if $R'\subset R$, then $m(R, R') =\Big(\frac{|R'|}{|R|}\Big)^{A}$, where $A$ is a sufficiently large constant. Then this matrix defines a bounded operator from $\ell^2(\cT\times \cT)$ to itself.
\end{lemma}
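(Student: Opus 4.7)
The plan is to repeat the Schur-test argument of Lemma \ref{SchurT} in the product setting. Since the matrix is symmetric -- one checks $m(R, R') = m(R', R)$ in both branches of the definition -- it suffices to prove the uniform row-sum bound
$$\sup_{R \in \cT\times\cT} \sum_{R' \in \cT\times\cT} m(R, R') \le C_0,$$
and then Schur's test gives boundedness of $m$ on $\ell^2(\cT\times\cT)$.

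Fix $R = I \times J$ and split the row sum into the ancestor contribution $(R' \supseteq R)$ and the descendant contribution $(R' \subsetneq R)$. The ancestor sum is indexed by $(k, l)$ with $k, l \ge 0$: for each such pair there is a unique $R' = I' \times J'$ with $|I'|/|I| = 2^k$ and $|J'|/|J| = 2^l$, giving $m(R, R') = 2^{-A(k+l)}$. The resulting double geometric series $\sum_{k, l \ge 0} 2^{-A(k+l)}$ equals $(1 - 2^{-A})^{-2}$, finite for every $A > 0$.

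For the descendant contribution, at each scale pair $(k, l)$ with $k + l > 0$ there are exactly $2^{k+l}$ dyadic subrectangles $R' \subsetneq R$ with $|R'|/|R| = 2^{-(k+l)}$, and each contributes $m(R, R') = 2^{-A(k+l)}$. So the per-scale total is $2^{(1-A)(k+l)}$, and summing over $(k, l)$ yields $(1 - 2^{1-A})^{-2}$, convergent precisely when $A > 1$. Taking $A > 1$ -- which is the content of ``sufficiently large'' -- yields the desired row-sum bound, and by the symmetry noted above the column sums obey the same estimate. Schur's test (equivalently, the $\ell^\infty\to\ell^\infty$ and $\ell^1\to\ell^1$ interpolation argument used in Lemma \ref{SchurT}) then concludes the proof.

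The only genuine departure from the one-parameter Lemma \ref{SchurT} is the multiplicative count of descendants: in two parameters there are $2^{k+l}$ subrectangles at scale ratio $2^{-(k+l)}$, rather than the $2^k$ subintervals of the one-parameter case. This shifts the convergence threshold from $c > 0$ to $A > 1$, but otherwise the argument is routine bookkeeping and I do not foresee any real obstacle.
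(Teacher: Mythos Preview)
Your proof is correct and follows exactly the approach the paper indicates: the paper's proof is the single line ``This is easy to see by applying Schur's test as in the proof of Lemma \ref{SchurT},'' and you have carried out precisely that computation, correctly identifying the threshold $A>1$ coming from the $2^{k+l}$ descendant count in the product setting.
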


\begin{proof}
This is easy to see by applying Schur's test as in the proof of Lemma \ref{SchurT}.
\end{proof}

%%%%%%%%%%%

\bigskip

If we have 1), then by  Lemma \ref{SchurTT2} (it is an analog of Lemma \ref{SchurT} for bi-tree) we will get from the last display
\begin{equation}
\label{oo2}
\sum_{I odd, J odd}  (\al_0, h_{ I} \otimes h_J)^2  \le C\eps_1^{-2}\|\text{sum in }\, \eqref{ppagain1}\|_2^2\,.
\end{equation}
This is because $\eps_1>>\eps_2$. And \eqref{oo2} is  exactly \eqref{oo1}.

\bigskip

At this moment we established the following dichotomy: either we have proved \eqref{oo1}, or, otherwise we  have $|(\one_U , \tilde\one_{ I}\otimes T_2\tilde\one_J)| \le \eps_1$ (for all $I, J$!), and we also have 
$|(\one_U , T_1\tilde\one_{ I}\otimes T_2\tilde\one_J)| \le \eps_2$ (again for all $I, J$!). When we say ``all'' we mean all $I, J$ such that $I\times J\subset U$. (We think  here that $U$ is connected, this does not restrict the generality as we can consider the connected components of $U$ one by one.)

 \bigskip
 
Now we call the attention of the reader that third and second lines of \eqref{ppagain1}  are totally symmetric under the interchange of $I$ and $J$. So we consider again two cases:  either 1) $|(\one_U , T_1\tilde\one_{ I}\otimes \tilde\one_J)| \le \eps_1$ (for all $I, J$); or, 2) $|(\one_U , T_1\tilde\one_{ I}\otimes \tilde\one_J)| \ge \eps_1$ (for all $I, J$).

In the second case we obtain a  form of \eqref{oo2} symmetric under $I$ versus $J$ exchange:

\begin{equation}
\label{oo3}
\sum_{I odd, J odd}  (\al_0, h_{ I} \otimes h_{ J})^2 \le C\eps_1^{-2}\|\text{sum in }\, \eqref{ppagain1}\|_2^2\,.
\end{equation}
But \eqref{oo3} is  exactly \eqref{oo1}.

\bigskip

Now we are left with case 1), namely, $|(\one_U , T_1\tilde\one_{ I}\otimes \tilde\one_J)| \le \eps_1$ (for all $I, J$).
We remind that we came to the following conclusion: either we already proved \eqref{oo1} or we have  two other sets of inequalities:  $|(\one_U , \tilde\one_{ I}\otimes T_2\tilde\one_J)| \le \eps_1$ (for all $I, J$!), and we also have 
$|(\one_U , T_1\tilde\one_{ I}\otimes T_2\tilde\one_J)| \le \eps_2$ (again for all $I, J$!). Again, when we say ``all'' we mean all $I, J$ such that $I\times J\subset U$.

\bigskip

So all three terms $|(\one_U , T_1\tilde\one_{ I}\otimes T_2\tilde\one_J)|$, $|(\one_U , \tilde\one_{ I}\otimes T_2\tilde\one_J)|$, $|(\one_U , T_1\tilde\one_{ I}\otimes T_2\tilde\one_J)| $  for all $I, J$ are small.

\bigskip

At this moment we consider the first line of \eqref{ppagain1}. Notice that its output is in the (even, even) space by the definition of $T_1^*, T_2^*$.

All other three lines also have some of their output in (even, even) space. So we collect the factor in front of  the same
$h_{I'}\otimes h_{J'}$, $I', J'$ being (even, even) from all four lines of \eqref{ppagain1}.

We will have that the (even, even) part of the  sum in \eqref{ppagain1} is exactly equal to the sum of orthogonal terms ($\hat I$, $\hat J$ are (even, even)):
$$
[1\cdot (b, h_I\otimes h_J)+ \tilde\eps_1 (b, \dots) +\tilde\eps_2 (b, \dots) +\tilde \eps_3(b, \dots)] h_{\hat I}\otimes h_{\hat J}\,,
$$
where $I, J$ are children of of $\hat I, \hat J$ correspondingly, where dots $\dots$ stand for some $h_{I''_i\times J''_i}$, $i=1,2,3$, and where $I''_i\times J''_i$ is a close relative of $I\times J$. These close relatives will be $h_{\hat I}\otimes h_J$, $h_I\otimes  h_{\hat J}$ and $h_{\hat I}\otimes h_{\hat J}$. 
Also  $|\tilde \eps_1|\le \eps_1<< \eps_2$, $|\tilde \eps_2|\le \eps_1<<\eps_2$, $|\tilde\eps_3|\le \eps_2$.

\bigskip

Hence \eqref{ppagain1} has the norm squared bounded from below by the sum
$$
\sum_{I, J\, are \,odd, odd\dots}|1\cdot (b, h_I\otimes h_J)+ \tilde\eps_1 (b, h_{\hat I}\otimes h_J) +\tilde\eps_2 (b, h_I\otimes  h_{\hat J}) + \tilde\eps_3(b, h_{\hat I}\otimes h_{\hat J})|^2\,,
$$
where the summation is over all those $I\times J$, that are inside  $U$ and $I, J$ are both odd.
As we have seen $|\tilde \eps_1|\le \eps_1<< \eps_2$, $|\tilde \eps_2|\le \eps_1<<\eps_2$, $|\tilde\eps_3|\le \eps_2$.
Using this smallness and  Lemma \ref{SchurTT2}  for double tree, we conclude that the square of the norm of the sum in \eqref{ppagain1} is bounded from below by
the sum
$$
c\,| (b, h_I\otimes h_J)|^2\,,
$$
where the summation is over all $I\times J$, say, at least $1$-deep in $U$ and  and $I, J$ are both odd,
 and we got what we wanted at least for $odd, odd$ part of $b$, namely we got \eqref{ee1}.
 
 We just proved the following ``theorem''.
 \begin{theorem}
 \label{oothm}
 Let $T_1, T_2$ be dyadic shifts introduced at the beginning of this article. There is a universal constant $C<\infty$ such that
 $$
 \|b^{odd, odd}\|_{BMO_{ChF}} \le C\,\|[T_1^*, [T_2^*, b]]\|\,.
 $$
 \end{theorem}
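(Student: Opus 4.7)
The plan is to reduce the lower estimate to the single $\pi\pi$ piece of the doubly iterated commutator and then run a size-based case analysis. Normalize so $|U_0|=1$ and $\|b\|_{BMO_{ChF}}=1$; by Theorem \ref{T:BMO-rec} we may assume $\|b\|_{BMO_r}\le\eps_0$ with $\eps_0$ arbitrarily small, so any quantity a priori bounded by $C\|b\|_{BMO_r}$ is negligible. Writing $b=\al_0+\beta$ as in \eqref{deco}, Lemma \ref{1deep} kills the $\beta$-contribution when we pair $\cT^b_*(\rho)$ against a $\rho$ with Haar spectrum in $\cU_0$. Among the twenty-five operators catalogued in Section \ref{BMOr}, Lemma \ref{11} and the one-parameter bounds of Lemma \ref{L:TbonP} dispose of every signature with at most one off-diagonal $1$, so only the $\pi\pi$ sum \eqref{ppagain1} survives and must be bounded below.

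Next I would classify the three coefficient families
$(\one_U,T_1\tilde\one_I\otimes\tilde\one_J)$, $(\one_U,\tilde\one_I\otimes T_2\tilde\one_J)$, $(\one_U,T_1\tilde\one_I\otimes T_2\tilde\one_J)$
by magnitude. By Lemmas \ref{smallup1}--\ref{smalldown1}, each family is invariant under moving along the bi-tree as long as the rectangle stays sufficiently inside $U$, and consequently, after discarding an edge layer via the $m$-deep truncation of Remark \ref{m}, each family is either uniformly $\le\eps_2$ or uniformly $\ge\eps_2$ throughout $\cU$, for a pre-chosen pair $\eps_2\ll\eps_1\ll 1$. This produces a finite case-analysis (at most $2^3$ cases). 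The easy branches are those where some family is ``large'' on the $(odd,odd)$ parity component: one then extracts a projection of \eqref{ppagain1} orthogonal to the rest (the fourth line alone in the pure $(T_1,T_2)$-large case, a sum of two lines plus a Lemma \ref{SchurTT2} inversion in the mixed cases) and reads off $\|b^{odd,odd}\|_{BMO_{ChF}}\lesssim\eps_2^{-1}\|\cT^b\|$ directly.

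The decisive remaining branch is the ``all small'' case. There I would isolate the $(even,even)$ Haar output of \eqref{ppagain1}: the first line contributes the dominant term $1\cdot(b,h_I\otimes h_J)\,h_{\hat I}\otimes h_{\hat J}$ summed over $(odd,odd)$ indices $I,J$, while the other three lines contribute perturbations of size $\tilde\eps_1,\tilde\eps_2,\tilde\eps_3$ at the bi-tree neighbors $\hat I\times J$, $I\times\hat J$, $\hat I\times\hat J$. Applying the bi-tree Schur estimate of Lemma \ref{SchurTT2} to invert $1+$(perturbation) should return $\sum_{I,J\,odd}|(b,h_I\otimes h_J)|^2\lesssim\|\pi\pi\|_2^2\lesssim\|\cT^b\|^2$, which is the theorem.

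The main obstacle, and the place where the author signals ``three deliberate mistakes,'' is the application of Lemma \ref{SchurTT2}: as stated the lemma demands geometric decay $(|R|/|R'|)^A$ off the diagonal on the bi-tree, but the $(even,even)$ perturbation produced above only relates a rectangle to its three immediate bi-tree neighbors. One has to iterate the local perturbation to manufacture the required cross-scale decay, and simultaneously verify that no subtle cross-scale cancellations are hidden in the iteration. A parallel difficulty is quantitative control of the edge layer in $U$, where Lemmas \ref{smallup1}--\ref{smalldown1} fail; here the weak maximal estimate \eqref{weakM} guarantees $|U|\lesssim|U_0|$, so an $m$-deep truncation loses only a definite fraction of the Chang--Fefferman sum, but making this fraction effectively small uniformly in the case structure above is the combinatorial heart of the argument and the likely source of the extra hypothesis that forces the authors to settle for Theorems \ref{mainskip}, \ref{mainskip1} rather than the unconditional result.
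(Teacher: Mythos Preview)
Your outline follows the paper's own argument in Subsection \ref{onemore} essentially step for step, so there is no methodological difference to discuss. The issue is that this argument is, by the authors' own admission, deliberately flawed, and you have misidentified where the flaws lie.

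You claim that after the $BMO_r$-reductions and $f=\one_U$ only the four $\pi\pi$ lines survive. This is false and is precisely mistake (2) in Subsection \ref{disc}: the terms $\pi Z3$, $\pi Z4$, $Z\pi2$, $Z\pi4$ contain factors like $(T_1^*\one_U,\tilde\one_I\otimes h_J)$ which need \emph{not} vanish for $I\times J\subset U$, so these four blocks remain and destroy the near-orthogonality you need in the final step.

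You rely on Lemmas \ref{smallup1}--\ref{smalldown1} to conclude that each coefficient family is either uniformly $\le\eps_2$ or uniformly $\ge\eps_2$ throughout $\cU$. This is mistake (1) in Subsection \ref{disc}: those lemmas are only half true. For $(\one_U,T_1\tilde\one_I\otimes\tilde\one_J)$ the invariance holds when you move $I\mapsto\hat I$, because $T_1(\tilde\one_I-\tilde\one_{\hat I})$ is a Haar function supported in $U$; but the claimed invariance under $J\mapsto\hat J$ is simply wrong, since $T_1$ does not localize the difference $\tilde\one_J-\tilde\one_{\hat J}$. So smallness propagates horizontally but not vertically (and dually for $T_2$), and the global dichotomy on which your entire case analysis rests collapses.

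Your diagnosis of the obstacle as a defect in Lemma \ref{SchurTT2} (needing to ``iterate the local perturbation to manufacture geometric decay'') is off target. A perturbation supported on three immediate bi-tree neighbors with entries of size $\eps_1\ll 1$ has operator norm $O(\eps_1)$ and is trivially invertible; no iteration is needed. The paper's point in item (4) of Subsection \ref{disc} is the opposite: because the propagation lemmas fail, the parasite coefficients are \emph{not} small, so there is nothing to invert. The edge-layer issue you raise is real but secondary; the decisive obstructions are the two above, and they are exactly why the paper retreats to the extra hypothesis \eqref{skip1} in Theorems \ref{mainskip}, \ref{mainskip1}.
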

 
 \bigskip

Now if we have repeated commutator with not just one but with $4$ shifts of our type (our shift is even even to odd odd, but there are three analogous shifts obviously with different input/output). 

If repeated commutator of $b$ with all these $4$ shifts  are bounded we conclude as before that the full $\|b\|_{BMO_{ChF}}$ norm is the estimate from above for the maximum of norms of those repeated commutators with multiplication by $b$. To formulate the theorem, let us denote the shifts we used above by 
$$
T_1=: T^{eo}_1, \, T_2=: T^{eo}_2\,.
$$
Notice that those shifts have one drawback: they have huge kernels. So it is easy to understand that may be repeated commutator looses some information about the symbol $b$.

However we have three more couple $T^{oe}_1, T^{oe}_2$, $T^{eo}_1, T^{oe}_2$, $T^{oe}_1, T^{eo}_2$.
Then we can repeat the argument above for each of those couples. Then we get the ``theorem'':
\begin{theorem}
 \label{mainthm}
 There s a universal constant $C<\infty$ such that
 $$
 \|b\|_{BMO_{ChF}} \le C\,\max\big[\|[T^{eo}_1, [T^{eo}_2, b]]\|,\|[T^{oe}_1, [T^{oe}_2, b]]\|,\|[T^{eo}_1, [T^{oe}_2, b]]\|, \|[T^{oe}_1, [T^{eo}_2, b]]\| \big]\,.
 $$
 \end{theorem}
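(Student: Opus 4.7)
\textbf{Proof proposal for Theorem \ref{mainthm}.} The plan is to reduce the full Chang--Fefferman norm to four parity pieces and then reuse the argument of Theorem \ref{oothm} once for each of the four shift pairs. Decompose the Haar expansion of $b$ according to the parities of the two sides of each rectangle:
\begin{equation*}
b = b^{ee}+b^{eo}+b^{oe}+b^{oo},\qquad b^{p_1,p_2}:=\sum_{I\in p_1,\,J\in p_2}(b,h_I\otimes h_J)\,h_I\otimes h_J.
\end{equation*}
Because the four pieces have disjoint Haar supports, the Chang--Fefferman norm splits as
\begin{equation*}
\|b\|_{BMO_{ChF}}^2\ \le\ \sum_{p_1,p_2\in\{e,o\}}\|b^{p_1,p_2}\|_{BMO_{ChF}}^2\ \le\ 4\max_{p_1,p_2}\|b^{p_1,p_2}\|_{BMO_{ChF}}^2,
\end{equation*}
so it is enough to estimate each of the four parity pieces of $b$ by one of the four commutators.

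The key observation is that Theorem \ref{oothm} is really a \emph{template}: its proof only used that $T^{eo}_i$ is nonzero exactly on Haar functions supported on \emph{even} dyadic intervals (so that $(T^{eo}_i)^*$ acts nontrivially only on Haar functions indexed by \emph{odd} intervals). This is precisely what forced the fourth line of \eqref{ppagain1} to produce a clean $(\odd,\odd)$ output, from which the $(\odd,\odd)$ Haar coefficients of $b$ were isolated. I would now rerun that entire argument with each of the three other pairs in turn. For the pair $(T^{\al_1}_1,T^{\al_2}_2)$ with $\al_i\in\{eo,oe\}$, let $\bar\al_i:=\odd$ if $\al_i=eo$ and $\bar\al_i:=\even$ if $\al_i=oe$. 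Then the analog of the expansion \eqref{ppagain1} has the same four-line structure, with the parity restrictions on the summation indices flipped according to the kernel of $(T^{\al_i}_i)^*$. The fourth line's Haar output in the $(\bar\al_1,\bar\al_2)$ parity class is then exactly
\begin{equation*}
\sum_{I\in\bar\al_1,\,J\in\bar\al_2}(\al_0,h_I\otimes h_J)\,(\one_U,T^{\al_1}_1\tilde\one_I\otimes T^{\al_2}_2\tilde\one_J)\,h_I\otimes h_J.
\end{equation*}

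The proof of Theorem \ref{oothm} would now be replayed verbatim: first reduce to the $\pi\pi$-part using Lemma \ref{11} and the fact that every term containing a "$D$" factor contributes at most $C\|b\|_{BMO_r}\le C\eps_0$; then perform the case analysis on whether the coefficients $(\one_U,T^{\al_1}_1\tilde\one_I\otimes \tilde\one_J)$, $(\one_U,\tilde\one_I\otimes T^{\al_2}_2\tilde\one_J)$, $(\one_U,T^{\al_1}_1\tilde\one_I\otimes T^{\al_2}_2\tilde\one_J)$ are small or large. The smallness-propagation Lemmas \ref{smallup1} and \ref{smalldown1} still apply with $T^{\al_i}_i$ in place of $T_i$ since they only relied on the fact that a shift moves a Haar function by one generation; so the dichotomy "either one coefficient is $\ge\eps_2$ throughout $U$, or all are small throughout $U$" is preserved. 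In the first branch we extract $(\bar\al_1,\bar\al_2)$ coefficients directly from the fourth line; in the second branch we exploit the dominant diagonal of the matrix
\begin{equation*}
(b,h_I\otimes h_J)+\tilde\eps_1(b,h_{\hat I}\otimes h_J)+\tilde\eps_2(b,h_I\otimes h_{\hat J})+\tilde\eps_3(b,h_{\hat I}\otimes h_{\hat J})
\end{equation*}
and invert it using Lemma \ref{SchurTT2} on the bi-tree. In both branches one arrives at $\|b^{\bar\al_1,\bar\al_2}\|_{BMO_{ChF}}\le C\|[T^{\al_1}_1,[T^{\al_2}_2,b]]\|$ (using $\|[T^{\al_1,*}_1,[T^{\al_2,*}_2,b]]\|=\|[T^{\al_1}_1,[T^{\al_2}_2,b]]\|$ by taking adjoints). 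Combining the four instances with the decomposition above yields Theorem \ref{mainthm}.

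The main obstacle is exactly the one acknowledged by the disclaimer at the start of Subsection \ref{onemore}: the three gaps in the proof of Theorem \ref{oothm} are inherited here, since our argument is a direct parity-transport of that proof. In particular, the step where we replace $\al_0$ by the genuinely deep piece in order to kill non-$\pi\pi$ parts, and the bookkeeping that separates the contribution of a given parity class from the cross terms with close-relative rectangles, remain the delicate points. Once those are circumvented for the original $(T^{eo}_1,T^{eo}_2)$ case, the same fix automatically transfers to the remaining three pairs, because the structural lemmas (Lemmas \ref{smsy}, \ref{smsy1}, \ref{1deep}, \ref{smallup1}, \ref{smalldown1}, \ref{SchurTT2}) are insensitive to the particular parity of the shifts used.
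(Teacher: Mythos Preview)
Your proposal follows exactly the paper's approach: the paper's ``proof'' of Theorem \ref{mainthm} consists of the single remark that one repeats the argument of Theorem \ref{oothm} for each of the four shift pairs $(T^{\al_1}_1,T^{\al_2}_2)$, thereby controlling each parity piece of $b$, and then combines them. You have spelled this out in somewhat more detail (the explicit parity decomposition of $b$, the identification of which pair controls which piece, and the adjoint identity), but the substance is identical, including the explicit acknowledgment that the argument inherits the deliberate gaps of Subsection \ref{onemore}.
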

 
It is well known that  the estimates from above in this theorem also holds, see e.g. \cite{LM}.

\subsection{Discussion of the deliberate mistakes}
\label{disc}

\begin{enumerate}
\item In Lemmas \ref{smallup}, \ref{smallup1},  and equation \eqref{equ1} we checked the claim for $R_1=\hat I\times J$, and it is correct. But it is incorrect for  and $R_2 = I\times \hat J$. In other words it is not true that one small coefficients automatically means that all of them are small. We can ``spread smallness'' horizontally, but not vertically. Lemmas \ref{smallup}, \ref{smallup1} were about $T_1$. The similar claims about $T_2$ would allow to ``spread smallness'' vertically, but not horizontally. So we cannot ``spread smallness" throughout the whole of connected  set $U$.
\item In estimating the repeated paraproduct from below we can disregard all parts with $D$ (see Section \ref{blocks}), as they are bounded by $BMO_r$ norm of the symbol. This is true.  Also it is trues that substituting $f=\one_U$ eliminates all $ZZ$ sums. But it is not true that substituting $f=\one_U$ eliminates all  $\pi Z, Z\pi$ terms of repeated paraproduct and leaves us only with $\pi\pi$ part. In fact, if one take a look at all the $16$  first sums comprising  repeated paraproduct  except for parts with $D$ (see the beginning of Section \ref{skipping}), we can see that $\pi Z$ unfortunately does not cancel out completely. The term $\pi Z3$ contains $(T_1^*, \tilde \one_I\otimes h_J)$, and this term is not obliged to be zero. (But  $(T_2^*, \tilde \one_I\otimes h_J)$ is zero.) Again $\pi Z4$ contains $(T_1^*T_2^*, \tilde \one_I\otimes h_J)$, and this term is not obliged to be zero. So $\pi Z3, \pi Z4$ survive, which is very unfortunate, they join all  for lines of $\pi\pi$ survivors.
Moreover, by the same reason $Z\pi2$ and $Z\pi4$ also survive. The more terms survive (now we have $6$ of them, the more difficult is to separate their action.
\item At least all $\pi\pi$ terms are almost orthogonal. Adding $\pi Z3, \pi Z4$,  $Z\pi2$ and $Z\pi4$, makes the survival part not to be the sum of almost orthogonal terms.
\item Let us assume for a while that we are left only with almost orthogonal part, that is, with $\pi\pi$. This would happen if we consider the repeated commutator, but not with multiplication by $b$ but with bi-parameter paraproduct $\pi_b$. This is also interesting, in principle. Let us take a look how we can use almost orthogonality. So $f=\one_U, b=\al_0$ and we collect the terms in front of
$h_K\otimes h_L$ in $\pi\pi$ all four lines below. Here is the coefficient in front of $h_K\otimes h_L$. Recall that $K, L$ are such that $\hat K\times \hat L\subset U$.
\begin{align*}
&\sign(K, \hat K)\sign(L, \hat L) (b, h_{\hat K} \otimes h_{\hat L}) \cdot (\one_U, \tilde \one_{\hat K}\otimes\tilde\one_{\hat L}) -
\\
&-\sign(K, \hat K) (b, h_{\hat K}\otimes h_L) (\one_U, \tilde\one_{\hat K}\otimes T_2 \tilde\one_L) -
\\
&-\sign(L, \hat L) (b, h_{ K}\otimes h_{\hat L}) (\one_U, T_1\tilde\one_{ K}\otimes  \tilde\one_{\hat L}) +
\\
& (b, h_{ K}\otimes h_L) (\one_U, T_1\tilde\one_{ K}\otimes T_2 \tilde\one_L) = 
\\
&\pm (b, h_{\hat K} \otimes h_{\hat L})  -
\\
&\pm (b, h_{\hat K}\otimes h_L) (\one_U, \tilde\one_{\hat K}\otimes T_2 \tilde\one_L) -
\\
&\pm (b, h_{ K}\otimes h_{\hat L}) (\one_U, T_1\tilde\one_{ K}\otimes  \tilde\one_{\hat L}) 
\\
& +(b, h_{ K}\otimes h_L) (\one_U, T_1\tilde\one_{ K}\otimes T_2 \tilde\one_L)
\\
=: x_{K, L}\,.
\end{align*}
So we are able to conclude immediately that the boundedness of the repeated commutator with bi-parameter $\pi_b$ (not with multiplication by $b$!) implies that
\begin{equation}
\label{xKL}
\sum_{K, L: \hat K\times \hat L\subset U} x_{K, L}^2 \le \|\text{repeated commutator with}\, \pi_b\|^2 |U|\,.
\end{equation}

But the ultimate goal would be to get a {\it different} inequality:
\begin{equation}
\label{xKL}
\sum_{K, L: \hat K\times \hat L\subset U} (b, h_{\hat K} \otimes h_{\hat L})^2 \le \|\text{repeated commutator with}\, \pi_b\|^2 |U|\,.
\end{equation}

Those inequalities are close to each other: just  by the previous calculation of $x_{K, L}$ above. But they are not the same. How to get rid of extra terms in $x_{K,L}$? Lemma \ref{SchurTT2} should help, but to apply it we need the smallness of ``parasite'' terms
$(\one_U, \tilde\one_{\hat K}\otimes T_2 \tilde\one_L)$, $(\one_U, T_1\tilde\one_{ K}\otimes  \tilde\one_{\hat L})$,
$(\one_U, T_1\tilde\one_{ K}\otimes T_2 \tilde\one_L)$. But the size of those terms is probably approximately $1$, and as we already noticed Lemma \ref{smallup1}, that dealt with this size, is only partially true.
\item And a final difficulty is that in $(b, h_{\hat K} \otimes h_{\hat L}) $ above we have only $\hat K,\hat  L$ (even, even) by the definition of our dyadic shifts.

\end{enumerate}

%\includepdf[pages=1-2]{O\pi Z1$, $\pi Z2$ vanish along with $Z\pi1$ and $Z\pipList.pdf}

\section{The lists of building blocks of repeated commutators with $\bfT$ and $\bfT^*$}
\label{blocks}

Here are all the blocks of repeated commutator with $T_1^*\otimes T_2^*$. The blocks of repeated commutator with $T_1\otimes T_2$ are exactly the same, only stars are deleted.  All parts with $D$ letter are estimated by $\lesssim \|b\|_{BMO_r}$. Under the localization to $U_0$ we put $f=\one_U$, where $U$ is the enlargement of $U_0$. 

Then $ZZ$ part vanishes and $Z\pi 3$. The rest stays and should be estimated from below. In the next section we show that this is easy if $b$ has some extra property. In this section we show how difficult is the problem when this extra property is not assumed. However, there is a hope because the main part that is left after localization is $\pi\pi$ and it  has the property of almost orthogonality.

\bigskip

\begin{align*}
\pi\pi: & \sum(b, h_I\otimes h_J) \La f\Ra_{I\times J} (T_1^* h_I)\otimes (T_2^* h_J)
\\
- & \sum(b, h_I\otimes h_J) \La T_2^*f\Ra_{I\times J} (T_1^* h_I)\otimes ( h_J)
\\
- & \sum(b, h_I\otimes h_J) \La T_1^*f\Ra_{I\times J} (h_I)\otimes (T_2^* h_J)
\\
+& \sum(b, h_I\otimes h_J) \La T_1^* T_2^*f\Ra_{I\times J} (h_I)\otimes (h_J)
\end{align*}

\begin{align*}
\pi Z: & \sum(b, h_I\otimes h_J) ( f, \tilde\one_I \otimes h_J) (T_1^* h_I)\otimes (T_2^* \tilde\one_J)
\\
- & \sum(b, h_I\otimes h_J) ( T_2^*f, \tilde\one_I \otimes h_J) (T_1^* h_I)\otimes ( \tilde\one_J)
\\
- & \sum(b, h_I\otimes h_J) ( f, T_1^*\tilde\one_I \otimes h_J) ( h_I)\otimes (T_2^* \tilde\one_J)
\\
+& \sum(b, h_I\otimes h_J) ( T_1^* T_2^*f, \tilde\one_I \otimes h_J) ( h_I)\otimes ( \tilde\one_J)
\end{align*}

\begin{align*}
Z\pi: &  \sum(b, h_I\otimes h_J) ( f,  h_I \otimes \tilde\one_J)(T_1^*\tilde\one_I) \otimes (T_2^*h_J)
\\
-& \sum(b, h_I\otimes h_J) ( T_2^*f,  h_I \otimes \tilde\one_J)(T_1^*\tilde\one_I) \otimes (h_J)
\\
-&  \sum(b, h_I\otimes h_J) ( T_1^*f,  h_I \otimes \tilde\one_J)(\tilde\one_I )\otimes (T_2^*h_J)
\\
+ &  \sum(b, h_I\otimes h_J) (T_1^* T_2^*f,  h_I \otimes \tilde\one_J)(\tilde\one_I) \otimes (h_J)
\end{align*}

\begin{align*}
ZZ: &\sum(b, h_I\otimes h_J) ( f,  h_I \otimes h_J)(T_1^*\tilde\one_I) \otimes (T_2^*\tilde\one_J)
\\
- &\sum(b, h_I\otimes h_J) ( f,  T_2^*h_I \otimes h_J)(T_1^*\tilde\one_I) \otimes (\tilde\one_J)
\\
-&\sum(b, h_I\otimes h_J) (T_1^* f,  h_I \otimes h_J)(\tilde\one_I) \otimes (T_2^*\tilde\one_J)
\\
+&\sum(b, h_I\otimes h_J) (T_1^*T_2^* f,  h_I \otimes h_J)(\tilde\one_I) \otimes (\tilde\one_J)
\end{align*}

\begin{align*}
\pi D: -&\sum(b, h_I\otimes h_{\hat J}) (f, \tilde\one_I\otimes h_J) \frac1{\sqrt{
|\hat J|}} (T_1^* h_I)\otimes h_{\hat J} 
\\
+ &\sum(b, h_I\otimes h_{\hat J}) (T_1^*f, \tilde\one_I\otimes h_J) \frac1{\sqrt{
|\hat J|}} ( h_I)\otimes h_{\hat J} 
\end{align*}

\begin{align*}
Z D: -&\sum(b, h_{I}\otimes h_{\hat J}) (f, h_I\otimes h_J) \frac1{\sqrt{
|\hat J|}} (T_1^* \tilde\one_I)\otimes h_{\hat J} 
\\
+ &\sum(b, h_{ I}\otimes h_{\hat J}) (T_1^*f, h_I\otimes h_J) \frac1{\sqrt{
|\hat J|}} ( \tilde\one_I)\otimes h_{\hat J} 
\end{align*}

\begin{align*}
D\pi: -&\sum(b, h_{\hat I}\otimes h_J) (f, h_I\otimes \tilde\one_J) \frac1{\sqrt{
|\hat I|}} (h_{\hat I})\otimes T_2^*h_{ J} 
\\
+ &\sum(b, h_{\hat I}\otimes h_J) (T_2^*f, h_I\otimes \tilde\one_J) \frac1{\sqrt{
|\hat I|}} (h_{\hat I})\otimes h_{ J} 
\end{align*}

\begin{align*}
DZ: -&\sum(b, h_{\hat I}\otimes h_{ J}) (f, h_I\otimes h_J) \frac1{\sqrt{
|\hat I|}}  (h_{\hat I} )\otimes  (T_2^*\tilde\one_J)
\\
+ &\sum(b, h_{ \hat I}\otimes h_{J}) (T_2^*f, h_I\otimes h_J) \frac1{\sqrt{
|\hat I|}} (h_{\hat I})\otimes (\tilde \one_J)
\end{align*}

$$
DD: \sum(b, h_{\hat I}\otimes h_{ J}) (f, h_I\otimes h_J)h_{\hat I}\otimes h_{\hat J}
$$

\section{Why everything works for $b$ skipping scales?}
\label{skipping}

As nothing works for general $b$, let us assume that our $b$ skips scales. We mean here extra assumption that
\begin{equation}
\label{skip1}
(b, h_I\otimes h_J)\neq 0\Rightarrow I\, J, \text{are even, even}\,.
\end{equation}

In this special case we get the following
\begin{theorem}
\label{mainskip}
Let $b$ satisfy \eqref{skip1}. Then for any dyadic open set $U_0$ we have
$$
\sum_{I\times J\subset U_0} (b, h_I\otimes h_J)^2 \le C_1 \|\cT^b\|^2|U_0| + C_2 \|b\|_{BMO_{r}}^2|U_0|\,.
$$
\end{theorem}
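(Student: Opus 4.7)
My plan is to test $\cT^b$ against $f=\one_U$, where $U=\{M_s^d\one_{U_0}\ge 1/16\}$ is the enlargement of Definition \ref{UU0}, and to extract the desired Carleson sum by projecting the output onto the closed subspace $H$ spanned by $(\text{odd},\text{odd})$ Haar functions $h_K\otimes h_L$ with $K\times L\subset U_0$. Writing $b=\alpha_0+\beta$ as in Section \ref{small}, Lemma \ref{1deep} gives $(\cT^b\one_U,\rho)=(\cT^{\alpha_0}\one_U,\rho)$ for any $\rho$ with $\sigma_H(\rho)\subset\cU_0\subset\cU$, so for pairings landing in $H$ one may replace $b$ by $\alpha_0$ throughout.

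The heart of the argument is a block-by-block inspection of the nine pieces listed in Section \ref{blocks} (with the stars removed so as to pass from $\bfT^*$ to $\bfT$): under \eqref{skip1} \emph{only the first line of $\pi\pi$} contributes to the projection of $\cT^{\alpha_0}\one_U$ onto $H$. The skipping hypothesis forces every occurring $(b,h_I\otimes h_J)$ to have $I,J$ both even, which rigidly determines the parity of the output factors $h_{\bullet}$, $T_k h_{\bullet}$, $\tilde\one_{\bullet}$, $T_k\tilde\one_{\bullet}$. A case check shows: lines $2$--$4$ of $\pi\pi$ land in the sectors $(\text{even},\text{odd})$, $(\text{odd},\text{even})$, $(\text{even},\text{even})$; the $D$-blocks $\pi D$, $ZD$ land in $(\cdot,\text{even})$ and $D\pi$, $DZ$ in $(\text{even},\cdot)$; every line of $\pi Z$, $Z\pi$, $ZZ$ and also $DD$ is killed either because an $(\one_U,h_I\otimes h_J)$-, $(\one_U,h_I\otimes\tilde\one_J)$-, or $(\one_U,\tilde\one_I\otimes h_J)$-type factor vanishes when $I\times J\subset U$, or because an adjoint factor $T_k^*h_K$ vanishes by the forced evenness of $K$. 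The surviving block, using $\langle\one_U\rangle_{I\times J}=1$ for $I\times J\subset U$, is
\begin{equation*}
\sum_{\substack{I,J\ \text{even}\\ I\times J\subset U}}(b,h_I\otimes h_J)\,(h_{I_+}-h_{I_-})\otimes(h_{J_+}-h_{J_-}),
\end{equation*}
whose $H$-projection restricts the summation to children $I_\epsilon\times J_\eta\subset U_0$; for every $R=I\times J\subset U_0$ this preserves all four orthonormal summands, each with coefficient $\pm(b,h_R)$.

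Let $\rho$ denote this $H$-projection. Pythagoras gives $\|\rho\|_2^2\ge 4\sum_{R\subset U_0,\,\text{even-even}}(b,h_R)^2$, while $\sigma_H(\rho)\subset\cU_0$ together with the block analysis and step one yield $(\cT^b\one_U,\rho)=\|\rho\|_2^2$. Cauchy--Schwarz, $\|\cT^b\one_U\|_2\le\|\cT^b\|\sqrt{|U|}$, and $|U|\le C_0|U_0|$ give $\|\rho\|_2^2\le C_0\|\cT^b\|^2|U_0|$, and \eqref{skip1} identifies $\sum_{R\subset U_0}(b,h_R)^2$ with its even-even restriction. This proves the theorem, in fact with $C_2=0$: the $\|b\|_{BMO_r}^2|U_0|$ term in the statement is a safety buffer that could absorb a $D$-block contribution via Lemma \ref{11} if one preferred to bypass the detailed vanishing check. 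The main obstacle is precisely that block-by-block verification: it relies on the simultaneous use of three independent facts---the parity restrictions from \eqref{skip1}, the identity $T_k^*h_K=0$ for even $K$, and the orthogonality $(\one_U,h_R)=0$ for $R\subset U$---whose failure to coincide in the general case is the source of the difficulties analysed in Section \ref{analyze}.
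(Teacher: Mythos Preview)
Your approach is essentially the paper's: test on $\one_U$, reduce to $\alpha_0$, and use the parity dichotomy forced by \eqref{skip1} to isolate $\pi\pi1$ from the remaining blocks. Your observation that the $D$-blocks, not only the $\pi\pi2$--$\pi\pi4$ lines, land in $(\text{even},\cdot)$ or $(\cdot,\text{even})$ under skipping is correct and slightly sharper than the paper, which simply bounds them by $\|b\|_{BMO_r}$ (Lemma~\ref{11}); this is why you can take $C_2=0$.

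There is, however, one incorrect step in your case check. You assert that \emph{every} line of $\pi Z$ and $Z\pi$ is killed by a vanishing factor of type $(\one_U,h_I\otimes h_J)$, $(\one_U,h_I\otimes\tilde\one_J)$, $(\one_U,\tilde\one_I\otimes h_J)$, or by $T_k^*h_K=0$ with $K$ even. This fails for $\pi Z3$ and $Z\pi2$: with $f=\one_U$ the relevant factors are
\[
(T_1\one_U,\tilde\one_I\otimes h_J)=(\one_U,(T_1^*\tilde\one_I)\otimes h_J)
\quad\text{and}\quad
(T_2\one_U,h_I\otimes\tilde\one_J)=(\one_U,h_I\otimes(T_2^*\tilde\one_J)),
\]
and $T_k^*\tilde\one_K$ is \emph{not} localized to $K$ (its Haar expansion lives on ancestors), so these pairings need not vanish. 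The paper states this explicitly in the proof of Theorem~\ref{mainskip} (``$\pi Z3,\pi Z4$ do not vanish'', ``$Z\pi2,Z\pi4$ do not vanish''), and it is precisely the point elaborated in item~(2) of Section~\ref{disc}.

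The fix is immediate and uses only machinery you already invoke: under \eqref{skip1} the output of $\pi Z3$ is $h_I\otimes T_2\tilde\one_J$ with $I$ even, hence lies in the $(\text{even},\cdot)$ sector, and the output of $Z\pi2$ is $(T_1\tilde\one_I)\otimes h_J$ with $J$ even, hence in $(\cdot,\text{even})$. Both are therefore orthogonal to $H$ for the same parity reason you use for the $D$-blocks and for $\pi\pi2$--$\pi\pi4$. With this correction your projection identity $P_H(\cT^{\alpha_0}\one_U)=\pi\pi1$ holds and the rest of the argument goes through.
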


\begin{proof}
Just in front  of this section the reader can see the table of all parts of the commutator. It is easy to  prove that
all parts $D\pi, \pi D, ZD, DZ$ are bounded by $\|b\|_{BMO_{r}}^2|U_0$. The $DD$ part is bounded just be
$$
\sup_{I, J} \frac{(b, h_I\otimes h_J)^2}{|I||J|}\|f\|_2^2,
$$
which is bounded by $\|b\|_{BMO_{r}}^2\|f\|_2^2$ again.

\bigskip

If $b$ is reduced to $\al_0$--the part of $b$ with Haar spectrum in $U_0$. We can  consider such reduced $b=\al_0$ by Lemma \ref{smsy}. At the same times this means that $I, J$ in $\pi\pi$, $\pi Z$, $Z\pi$, $ZZ$ parts are such that 
$I\times J\subset U_0$.

If $f= \one_U$, where $U$ is chosen in \eqref{UU0}, $|U|\le C|U_0|$, then $ZZ$ part vanishes,
if $b$ is reduced to $\al_0$--the part of $b$ with Haar spectrum in $U_0$. 

If $f= \one_U$, in $\pi nZ$ part the first and the second lines vanish, but $\pi Z3, \pi Z4$ do not vanish,
in $Z\pi$ part the first and the third lines vanish but $Z\pi2, Z\pi4$ do not vanish.

Notice that by assumption \eqref{skip1} $\pi Z3, \pi Z4$ are in (even, any), and $Z\pi2, Z\pi4$ are  in (any, even).

\bigskip

Let us take a look at the $\pi\pi1$-the first line of $\pi\pi$ part. It is is (odd, odd) part, and so it is orthogonal to 
$\pi Z3, \pi Z4$, $Z\pi2, Z\pi4$. 

It is also orthogonal to $\pi\pi2$, $\pi\pi3$, $\pi\pi4$-the second, third and fourth lines of $\pi\pi$ part.
In fact, by assumption \eqref{skip1} $\pi\pi2$ is in (odd, even), $\pi\pi3$ is in (even, odd), $\pi\pi2$ is in (even, even) parts.

Therefore,
\begin{equation}
\label{pZZp}
\sum_{I\times J\subset U_0} (\al_0, h_I\otimes h_J)^2 \le \|\pi\pi1(\al_0, \one_U)\|_2^2 \le \|\pi\pi(\al_0, \one_U)+\pi Z(\al_0, \one_U)+Z\pi(\al_0, \one_U)\|_2^2\,.
\end{equation}

On the other hand,
$$
\sup_{\|\rho\|_2=1, \sigma_H(\rho)\subset U_0} |(\one_U, \cT^b_*(\rho))|= \sup_{\|\rho\|_2=1, \sigma_H(\rho)\subset U_0} |(\one_U, \cT^{\al_0}_*(\rho))| =
$$
$$
\|\pi\pi(\al_0, \one_U)+\pi Z(\al_0, \one_U)+Z\pi(\al_0, \one_U) + ZZ(\al_0, \one_U) + \text{parts with D} (\al_0, \one_U)\|_2 \ge 
$$
$$
\|\pi\pi(\al_0, \one_U)+\pi Z(\al_0, \one_U)+Z\pi(\al_0, \one_U) \|_2 - \| \text{parts with D} (\one_U, \al_0)\|_2 \,.
$$

Now we already noticed that all parts $D\pi, \pi D, ZD, DZ, DD$ are bounded by $\|b\|_{BMO_{r}}^2|U_0$, and $ZZ(\al_0, \one_U)=0$. This and \eqref{pZZp} prove theorem.

\end{proof}

\begin{theorem}
\label{mainskip1}
Let $b$ satisfy \eqref{skip1}. Then  for any dyadic open set $U_0$
$$
\sum_{I\times J\subset U_0} (b, h_I\otimes h_J)^2 \le C_1 \|\cT^b\|^2|U_0| \,.
$$
\end{theorem}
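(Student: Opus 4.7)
The plan is to deduce Theorem \ref{mainskip1} directly from Theorem \ref{mainskip} by absorbing the residual $\|b\|_{BMO_r}^2|U_0|$ term on the right-hand side using the necessity result from Section \ref{BMOr}. Since the skipping hypothesis \eqref{skip1} is assumed in both statements, Theorem \ref{mainskip} is available verbatim and provides
$$
\sum_{I\times J\subset U_0}(b,h_I\otimes h_J)^2 \le C_1\|\cT^b\|^2|U_0| + C_2\|b\|_{BMO_r}^2|U_0|.
$$

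Next I would invoke Theorem \ref{T:BMO-rec}: if $\cT^b$ is bounded, then $b\in BMO_r$. The proof of that theorem, via Lemma \ref{L:TbonP} and the four parity cases producing \eqref{E:BMO-rec1}, \eqref{E:BMO-rec2}, \eqref{E:BMO-rec3}, and the final Case 4 estimate, in fact carries the quantitative bound
$$
\|b\|_{BMO_r}^2 \le C_3\|\cT^b\|^2,
$$
since every inequality in that argument uses only $\|\cT^b(p\otimes h_{J''})\|_2 \le \|\cT^b\|\|p\|_2$ as input. Substituting this into the previous display collapses the two terms on the right-hand side into a single $\|\cT^b\|^2|U_0|$ term with constant $C_1+C_2C_3$, which is exactly the claimed inequality.

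The step requiring verification is simply that the proof of Theorem \ref{T:BMO-rec} genuinely produces a quantitative bound proportional to $\|\cT^b\|^2$ rather than merely the qualitative membership $b\in BMO_r$; this is a routine inspection since the constants in \eqref{E:rec-evens}, \eqref{E:rec-odds}, \eqref{E:rec-sum}, and \eqref{E:1parBMOx}--\eqref{E:1parBMOy} all come from applying the commutator bound to explicit test functions. Consequently, there is no real obstacle: unlike the delicate situation analyzed in Section \ref{analyze}, where without the skipping hypothesis one cannot isolate the $\pi\pi$-part cleanly and must confront the non-orthogonal terms $\pi Z3,\pi Z4,Z\pi2,Z\pi4$, the assumption \eqref{skip1} already did all of this work in Theorem \ref{mainskip}, so Theorem \ref{mainskip1} becomes an immediate corollary.
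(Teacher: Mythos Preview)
Your proof is correct and uses the same two ingredients as the paper: Theorem \ref{mainskip} together with the quantitative bound $\|b\|_{BMO_r}\lesssim\|\cT^b\|$ from Section \ref{BMOr} (stated as \eqref{r}). The paper, however, organizes these ingredients through a dichotomy: it splits into the cases $\|b\|_{BMO_r}\le\frac{1}{4C_2}\|b\|_{BMO_{ChF}}$ and $\|b\|_{BMO_r}\ge\frac{1}{4C_2}\|b\|_{BMO_{ChF}}$, handling the second case directly by Section \ref{BMOr} and in the first case applying Theorem \ref{mainskip} to the \emph{extremal} $U_0$ so that the left side becomes $\|b\|_{BMO_{ChF}}^2|U_0|$ and the $BMO_r$ term can be absorbed.

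Your direct substitution of $\|b\|_{BMO_r}^2\le C_3\|\cT^b\|^2$ into the conclusion of Theorem \ref{mainskip} is cleaner: it works uniformly for every $U_0$ with no case split and no appeal to an extremal open set. The paper's dichotomy is a detour that your argument bypasses; nothing is lost, and the resulting constant $C_1+C_2C_3$ is explicit.
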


\begin{proof}
Let us consider two cases: 1) $\|b\|_{BMO_r} \le \frac1{4C_2} \|b\|_{BMO_{CHF}}$, 2) $\|b\|_{BMO_r} \ge \frac1{4C_2} \|b\|_{BMO_{CHF}}$.
In the second case we use Section \ref{BMOr} and conclude the claim of the theorem.

In the first case we use Theorem \ref{mainskip}. We choose the dyadic open set  $U_0$ that gives us the Chang--Fefferman $BMO$ norm of $b$, and we write down the conclusion of Theorem \ref{mainskip}.
The left hand side of
$$
\sum_{I\times J\subset U_0} (b, h_I\otimes h_J)^2 \le C_1 \|\cT^b\|^2|U_0| + C_2 \|b\|_{BMO_{r}}^2|U_0|
$$
is $\|b\|_{BMO_{ChF}}^2|U_0|$ because of extremal property of $U_0$.  Now we use $\|b\|_{BMO_r} \le \frac1{4C_2} \|b\|_{BMO_{CHF}}$ to absorb $C_2 \|b\|_{BMO_{r}}^2|U_0|$ into the left hand side.
It is well known that  the estimates from above in this theorem also holds, see e.g. \cite{LM}.

\end{proof}

\section{A counterexample}
\label{firstex}

Many  results 
suffer from the same gap as \cite{FL}, if they all rely on the claim of the form that follows or a similar claim:
\begin{equation}
\label{no}
\|\phi\|_2 \lesssim \|P\phi\|_2\,,
\end{equation}
where $P$ is a projection given by a Fourier multiplier whose symbol is the indicator
of a product of half-spaces (in the original case of \cite{FL}, simply $\bR_+\times \bR_+\subset \bR^2$) and
the Fourier transform of $\phi$ is “symmetric”. In practice, $\phi$ takes the form $\phi=|\alpha|^2$
for some function $\alpha$ analytic in $\bC_+^2$. Obviously  $\phi$ is non-negative and in particular real-valued, so some symmetry of Fourier transform should follow.

\bigskip

In the one parameter case the simple symmetry shows that the previous claim is obviously correct. But not so in two parameter case anymore, as an example  below  shows.

The statement \eqref{no} should be avoided (in this generality $\phi=|\alpha|^2$, $\alpha $ being nice analytic function in $\bC_+^2$) in the future corrected proof. However, something like that should be in the proof, as it is the only estimate from below in the whole reasoning: all other estimates in \cite{FL} are the estimates of operators from above.

\bigskip

Let $\alpha$ be an analytic function in $\bC_+\times \bC_+$ and let it be just Fourier transform of $\psi(\xi), \xi=(\xi_1, \xi_2)\in \bR^2_{++}$, $\psi\in L^2(d\xi_1 d\xi_2)$.
Let the support of $\psi$ be in $Q:=[0, 2]^2$. In what follows $d\xi=d\xi_1 d\xi_2$.

$$
|\alpha(x_1, x_2)|^2 =  \int\int e^{i x_1(\xi_1-\eta_1) + i x_2 (\xi_2-\eta_2)} \psi(\xi) \bar \psi(\eta) d\xi d\eta=
$$
$$
\int\int e^{i x_1t_1 + i x_2 t_2} \psi(\xi_1, \xi_2) \overline{ \psi(\xi_1-t_1, \xi_2-t_2)} d\xi_1 d\xi_2 dt_1 dt_2
$$

Hence  Fourier transform 
$$
(\cF |\alpha|^2)(t_1, t_2) = \int_{Q_0} \psi(\xi_1, \xi_2) \overline{\psi(\xi_1-t_1, \xi_2-t_2)} d\xi_1d\xi_2
$$

If $\psi$ is a characteristic function of $\epsilon$ neighborhood of $\xi_1+\xi_2 =1, \xi_i\ge 0$, then  for positive vectors $t$ this quantity is uniformly bounded by 
$C\epsilon$. Moreover, this quantity is zero  if positive vector $t$ has norm bigger than $C\epsilon$.

Hence
$$
\|P_{++}|\alpha|^2\|_2 \asymp \sqrt{\epsilon^2\cdot \epsilon^2} = \epsilon^2
$$

On the other hand,
$$
\| \alpha\|_2 \asymp \epsilon^{1/2}\,.
$$

Also let us look at $P_{+-} |\alpha|^2$. Now we look at 
$$
(\cF |\alpha|^2)(t_1, -t_2) = \int_{Q_0} \psi(\xi_1, \xi_2) \overline{\psi(\xi_1-t_1, \xi_2+t_2)} d\xi_1d\xi_2
$$
for positive vector $t$. We can see that this is of order $\epsilon$ for positive vectors $t$ such that they fill-in the domain inside angle of size $\epsilon$ and diameter $1$ and of total measure of order $\epsilon$. 
Thus 
$$
\|P_{+-} |\alpha|^2\|_2^2 \asymp \epsilon^3\,.
$$
So
$$
\| \alpha\|_4^2 =\| |\alpha|^2\|_2 \ge \epsilon^{3/2}\,.
$$
In fact,
$$
\| \alpha\|_4^2 =\| |\alpha|^2\|_2 \asymp \epsilon^{3/2}\,.
$$

So both inequalities below are false:
$$
\|P_{++}|\alpha|^2\|_2 \ge c \| \alpha\|_2^2; \quad  \|P_{++}|\alpha|^2\|_2 \ge c \| \alpha\|_4^2.
$$

Normalization of $\alpha$ does not help. Let $\tilde \alpha = \epsilon^{-1/2} \alpha$. Then 
$$
\|\tilde \alpha\|_2\asymp 1,
$$
$$
\| \tilde \alpha\|_4^2 =\| |\tilde\alpha|^2\|_2 \asymp \epsilon^{1/2}\,.
$$
And 
$$
\|P_{++}|\tilde\alpha|^2\|_2 \asymp  \epsilon\,.
$$

So the inequality below is false:
$$
\|P_{++}|\tilde\alpha|^2\|_2 \ge c.
$$
Even though $\|\tilde \alpha\|_2\asymp 1$.

Thus the inequality on page 208 line 4 top of \cite{L} cannot be true as written. Similarly, inequalities  in \cite{FL} on page 149 line 3 at the end of the page cannot be true as written.

Symmetry works in estimating from below positive frequency part of the Fourier (of real function) via the whole Fourier transform of this function perfectly well for functions of one variable. But for functions of two variables this does not work. Even if the function is a) positive and b) the modulus squared of analytic function in 
$\bC_+\times \bC_+$  as the example above shows.

One can guess what is missing is one more normalization (which we do not have in the example above):
\begin{equation}
\label{normali}
1\asymp \|\tilde \alpha\|_2 \lesssim \|\tilde \alpha\|_4\,.
\end{equation}

However, there is a modification of this counterexample suggested by T. Hyt\"onen that has this property as well.

%%%%%%%%%%%%%%%%%%%%%%%%%%%%%%%%%%%%%%%%

\end{document}